\newcommand{\eqref}[1]{(\ref{#1})}
\newtheorem{theorem}{Theorem}[section]
\newtheorem{proposition}[theorem]{Proposition}
\newtheorem{lemma}[theorem]{Lemma}
\newtheorem{corollary}[theorem]{Corollary}
\newtheorem{conjecture}[theorem]{Conjecture}
\newcommand{\Z}{\mathbb{Z}}
\newcommand{\R}{\mathbb{R}}
\newcommand{\N}{\mathbb{N}}
\newcommand{\B}{\mathcal{B}}
\newcommand{\vi}{\mathbf{i}}
\newcommand{\vx}{\mathbf{x}}
\newcommand{\vy}{\mathbf{y}}
\newcommand{\vt}{\mathbf{t}}
\renewcommand{\P}{\mathbb{P}}
\newcommand{\envP}{Q}
\newcommand{\Pl}{\mathbf{P}}
\newcommand{\E}{\mathrm{E}}
\newcommand{\El}{\mathbf{\E}}
\newcommand{\Var}{\operatorname{Var}}
\newcommand{\F}{\mathcal{F}}
\renewcommand{\S}{\mathcal{S}}
\newcommand{\Ai}{\operatorname{Airy}}
\newcommand{\Aifunc}{\operatorname{Ai}}
\newcommand{\sym}{\operatorname{Sym}}
\newcommand{\symz}{\mathfrak{Z}}
\begin{document}
\begin{frontmatter}

\title{The intermediate disorder regime for directed polymers in
dimension $1+1$\thanksref{T1}}
\runtitle{Intermediate disorder regime for directed polymers}
\thankstext{T1}{Supported by the
Natural Sciences and Engineering Research Council of Canada.}

\begin{aug}
\author[A]{\fnms{Tom} \snm{Alberts}\corref{}\ead[label=e1]{alberts@caltech.edu}},
\author[B]{\fnms{Konstantin} \snm{Khanin}}
\and
\author[B]{\fnms{Jeremy} \snm{Quastel}}
\runauthor{T. Alberts, K. Khanin and J. Quastel}
\affiliation{California Institute of Technology, University of
Toronto\break
and University of Toronto}
\address[A]{T. Alberts\\
Department of Mathematics\\
California Institute of Technology\\
MC 253-37\\
1200 E California Blvd.\\
Pasadena, California 91125\\
USA} 
\address[B]{K. Khanin\\
J. Quastel\\
Department of Mathematics\\
University of Toronto\\
Room 6290, 40 St. George St.\\
Toronto, Ontario M5S 2E4\\
Canada}
\end{aug}

\received{\smonth{4} \syear{2012}}
\revised{\smonth{2} \syear{2013}}

%
\begin{abstract}
We introduce a new disorder regime for directed polymers in dimension
$1+1$ that sits between the weak and strong disorder regimes. We call
it the \textit{intermediate disorder regime}. It is accessed by scaling
the inverse temperature parameter $\beta$ to zero as the polymer length
$n$ tends to infinity. The natural choice of scaling is $\beta_n:=
\beta n^{-1/4}$. We show that the polymer measure under this scaling
has previously unseen behavior. While the fluctuation exponents of the
polymer endpoint and the log partition function are identical to those
for simple random walk ($\zeta= 1/2, \chi= 0$), the fluctuations
themselves are different. These fluctuations are still influenced by
the random environment, and there is no self-averaging of the polymer
measure. In particular, the random distribution of the polymer endpoint
converges in law (under a diffusive scaling of space) to a random
absolutely continuous measure on the real line. The randomness of the
measure is inherited from a stationary process $A_{\beta}$ that has the
recently discovered \textit{crossover distributions} as its one-point
marginals, which for large $\beta$ become the GUE Tracy--Widom
distribution. We also prove existence of a limiting law for the
four-parameter field of polymer transition probabilities that can be
described by the stochastic heat equation.

In particular, in this weak noise limit, we obtain the convergence of
the point-to-point free energy fluctuations to the GUE Tracy--Widom
distribution. We emphasize that the scaling behaviour obtained is
universal and does not depend on the law of the disorder.

\end{abstract}

%
\begin{keyword}[class=AMS]
\kwd{60F05}
\kwd{82C05}
\end{keyword}
\begin{keyword}
\kwd{Directed polymers}
\kwd{near-critical scaling limits}
\kwd{$U$-statistics}
\kwd{KPZ equation}
\end{keyword}

\end{frontmatter}

\section{Introduction}\label{sec1}

The problem of directed polymers in a random environment was first
studied in \cite{HuseHenley} and received its first mathematical
treatment in \cite{ImSpen:diffusion}. Since then it has received
considerable attention in the statistical physics and probability
communities; see \cite{ComSY:review,HalpinHealy_Zhang} for reviews. In
the setting of the $d$-dimensional integer lattice, the polymer measure
is a random probability measure on paths of $d$-dimensional nearest
neighbour lattice walks. The randomness of the polymer measure is
inherited from an i.i.d. collection of random variables placed on the
sites of $\Z_+ \times\Z^d$. Collectively these variables are called
the random environment. Given a fixed environment $\omega\dvtx  \Z_+
\times
\Z^d \to\R$, the energy of an $n$-step nearest neighbour walk $S$ is
\[
H_n^{\omega}(S) = \sum_{i=1}^n
\omega(i, S_i).
\]
The polymer measure on such walks is then defined in the usual Gibbsian
way by
\[
\Pl_{n, \beta}^{\omega}(S) = \frac{1}{Z_{n}^{\omega}(\beta)} e^{\beta
H_n^{\omega}(S)}
\Pl(S),
\]
where $\beta> 0$ is the inverse temperature, $\Pl$ is the symmetric
simple random walk measure on paths started at the origin and
$Z_{n}^{\omega}(\beta)$ is the partition function
\[
Z_{n}^{\omega}(\beta) = \Pl \bigl[ e^{\beta H_n^{\omega}(S)} \bigr].
\]
The random environment is a probability measure $Q$ on the space of
environments $\Omega= \{ \omega\dvtx  \Z_+ \times\Z^d \to\R\}$. We let
$Q$ be a product measure so that the variables $\omega(i,z)$ are
independent and identically distributed, and we make the assumption
that the $\omega$ have moments of all orders and that
\[
\lambda(\beta):= \log\envP e^{\beta\omega} < \infty,
\]
at least for $\beta$ sufficiently small. In the conclusion, we will
describe work in progress for relaxing this assumption. Note that
throughout we use $\Pl$ and $Q$ to denote expectation as well as probability.

The overall goal of the subject is to study the behavior of the polymer
as $\beta$ and~$d$ vary and $n$ gets large. At $\beta= 0$ the polymer
measure is the simple random walk; hence the walk is entropy dominated
and exhibits diffusive behavior. For $\beta$ large the polymer measure
concentrates on paths with high energy and the diffusive behavior is no
longer guaranteed. Entropy domination of the measure is called \textit
{weak disorder}, and energy domination is called \textit{strong
disorder}. The precise separation between these two regimes is defined
in terms of the positivity of the limit of the martingale $e^{-n
\lambda
(\beta)} Z_n^{\omega}(\beta)$, as $n \to\infty$. The weak disorder
regime consists of $\beta$ for which
\[
\lim_{n \to\infty} e^{-n \lambda(\beta)} Z_n^{\omega}(
\beta) > 0,
\]
whereas if the limit is zero, then $\beta$ is said to be in the strong
disorder regime. For $d \geq3$, it was shown early on \cite
{ImSpen:diffusion,bolthausen:note} that weak disorder holds for small
$\beta$. Later,\vadjust{\goodbreak} Comets and Yoshida \cite{ComY:diffusive_weak} showed
that in every dimension there is a critical value $\beta_c$ such that
weak disorder holds for $0 \leq\beta< \beta_c$ and strong disorder
for $\beta> \beta_c$. In addition, for $d=1$ and $2$ they prove that
$\beta_c = 0$. In the rest of this paper we focus exclusively on the
case $d=1$ so that all positive $\beta$ are in the strong disorder regime.

Understanding the polymer behavior in the strong disorder regime is one
of our main goals. Strong disorder manifests itself in a variety of
ways that have become more evident in recent years. Arguably the most
well-known phenomenon is superdiffusivity of the paths under the
polymer measure. This is usually expressed through an exponent $\zeta$,
and although there is no commonly agreed upon definition of $\zeta$ in
the literature, it is roughly meant to be the exponent such that
\[
|S_n| \sim n^{\zeta}
\]
as $n \to\infty$, for ``typical'' realizations of $\omega$. For $d=1$
the long-standing conjecture is $\zeta= 2/3$, but at present has been
obtained only in models with specific weights and sometimes boundary
conditions. The best result is by Sepp\"{a}l\"{a}inen \cite
{Timo:exponent} for a model with specific weights and boundary
conditions. There are also upper and lower bounds (neither one sharp)
given for certain special models \cite{ComY:brownian_polymers,mejane:volume_bound,petermann:thesis,wuthrich:fluctuation_results,wuthrich:superdiffusive_behavior}. The picture is very different from
that of simple random walk where $\zeta= 1/2$, and the polymer
endpoint is roughly uniformly distributed on intervals of length $\sqrt {n}$. For positive $\beta$ the polymer is \textit{localized} and most
of the endpoint density sits in a relatively small region around a
random point at distance $n^{2/3}$ from the origin. The size of this
region is of much smaller order than $n^{2/3}$. In fact, it is believed
that the variance of the polymer endpoint is order one. Carmona and Hu
\cite{CH:Gaussian_partition} and Comets et al. \cite
{ComSY:localization_strong} showed that there is a constant $c_0 =
c_0(\beta) > 0$ such that the event
\[
\limsup_{n \to\infty} \max_{x \in\Z}
\Pl_{n, \beta}^{\omega
}(S_n = x) \geq c_0
\]
has $Q$ probability one. This phenomenon is called \textit{strong
localization}. It is in stark contrast to the simple random walk case
where the supremum decays like $n^{-1/2}$.

Strong disorder also has an effect on the large time behavior of the
partition function. First, for $\beta> 0$, there is the well-known inequality
%
\begin{eqnarray}
\label{annealing_bound} \rho(\beta)&:=& \lim_{n \to\infty} \frac{\log Z_{n}^{\omega
}(\beta)}{n} =
\lim_{n \to\infty} \frac{Q \log Z_{n}^{\omega}(\beta)}{n} < \lim_{n
\to\infty}
\frac{\log Q Z_n^{\omega}(\beta)}{n}
\nonumber
\\[-8pt]
\\[-8pt]
\nonumber
&=& \lambda(\beta)
\end{eqnarray}
between the quenched and annealed free energies (the second equality is
by a subadditivity argument and some concentration estimates; see, for
example, \cite{CH:Gaussian_partition,ComSY:localization_strong}). The
inequality is partially the standard annealing bound, but the fact that
it is strict is a feature of strong disorder that was proved in $d=1$
by Comets et al. \cite{ComV:cascades_polymers}.\vadjust{\goodbreak} Quantitative bounds on
the size of the gap were later proved in \cite
{lacoin:free_energy_bounds}. When strict inequality holds, $\beta$ is
said to be in the \textit{very strong disorder} regime, and hence in
$d=1$ very strong disorder and strong disorder are equivalent. From
\eqref{annealing_bound} the leading term behavior of the log of the
partition function is $\rho(\beta) n$, and the randomness is
conjectured to appear through a lower order term
%
\begin{equation}
\log Z_n^{\omega}(\beta) = \rho(\beta)n + c(\beta)
n^{\chi} X.
\end{equation}
The fluctuation exponent $\chi$ is believed to be $1/3$ for $d=1$, and
the random fluctuations $X$ are expected to converge to the
Tracy--Widom GOE distribution \cite{TracyWidom:level_spacing} arising
from the analogous asymptotics for the largest eigenvalue from the
Gaussian orthogonal ensemble. Observe that the conjectured values of
$\zeta$ and $\chi$ satisfy the simple relation (KPZ relation)
%
\begin{equation}
\label{convexity_relation} \chi= 2 \zeta- 1.
\end{equation}
Versions of this were recently proved rigorously in \cite
{chatterjee:exponents} and \cite{ad:exponents}, under the assumption
that the exponents exist as appropriate limits. In the $\beta= \infty$
case of last passage percolation Johannson has established, for a
particular distribution for the environment variables, that the scaling
exponents $\chi= 1/3$ and $\zeta= 2/3$ are correct and that the
fluctuations are of Tracy--Widom type, but rigorous mathematical proofs
remain elusive in the $\beta< \infty$ case of directed polymers.
Sepp\"
{a}l\"{a}inen \cite{Timo:exponent} managed to give proofs of the
exponents for the log-gamma model with a specific choice of the
environment and some boundary conditions, but the more general case
remains open.

As the description above indicates, much of the conceptual picture for
polymers in the strong disorder regime is understood, but little of it
is rigorously proved. In this paper we introduce a new disorder regime
that is interesting in its own right, and for which we are also able to
prove many results. We call this regime the \textit{intermediate
disorder regime}, and it is exclusive to the $d=1$ case. It corresponds
to a scaling of the inverse temperature with the length of the polymer.
The name is chosen because it sits between weak and strong disorder and
features of both are present. While the fluctuation exponents for
intermediate disorder coincide with those for weak disorder ($\zeta=
1/2, \chi= 0$), the fluctuations themselves, as in the strong disorder
regime, are \textit{not} decoupled from the random environment. In
particular, in contrast to the weak disorder case, the polymer measure
does not converge to a single deterministic limit. Instead, under the
diffusive scalings, the \textit{law} of the random polymer measure
converges to a limiting universal law, universal in the sense that it
does not depend on the particular distribution of the environment. In
the $1+1$-dimensional case we will prove: \newline

\textit{Under the scaling $\beta_n = \beta n^{-{1}/{4}}$ the following is true}:
\begin{itemize}
\item The partition function fluctuation exponent $\chi$ is $0$, and
the partition function
\[
e^{-n \lambda(\beta n^{-1/4})} Z_n^{\omega} \bigl(\beta n^{-{1}/{4}}
\bigr)\vadjust{\goodbreak}
\]
converges in law to a nondegenerate random variable $\mathcal
{Z}_{\sqrt
{2} \beta}$. The limiting random variable is square integrable, and its
Wiener chaos decomposition is explicit; see \eqref{Zn_wiener_chaos}.
\item The path fluctuation exponent $\zeta$ is $1/2$, and the law of
the distribution of $S_n/\sqrt{n}$ converges to a random density on the
real line. In particular this implies the absence of localization. More
precisely, there is a random local limit theorem for the endpoint density
\[
\biggl\{ x \mapsto\frac{\sqrt{n}}{2} \Pl_{n, \beta_n}^{\omega}(S_n
= x \sqrt {n}) \biggr\} \mathop{\longrightarrow}^{(d)} \biggl\{ x \mapsto\frac{1}{\mathcal{Z}_{\sqrt{2}
\beta}}
e^{A_{\sqrt{2} \beta}(x)} e^{-x^2/2} \,dx \biggr\},
\]
where $x \mapsto A_{\beta}(x)$ is a one-parameter family of stationary
processes whose one-point marginal distributions $G_{\beta}$ are the
so-called \textit{crossover distributions}, introduced in \cite{ACQ,SpohnSasa:prl}.
\item Under the intermediate disorder scaling and a diffusive scaling
of space and time, the polymer transition probabilities converge in law
as $n \to\infty$, that is,
\begin{eqnarray*}
&&\biggl\{ (s,y;t,x) \mapsto\frac{\sqrt{n}}{2} \Pl_{n, \beta_n}^{\omega} (
S_{nt} = x \sqrt{n} | S_{ns} = y \sqrt{n} ) \biggr\}\\
&&\qquad
\mathop{\longrightarrow}^{(d)} \frac
{\mathcal{Z}_{\sqrt{2} \beta}(s,y;t,x) \int\mathcal{Z}_{\sqrt{2}
\beta
}(t,x;1,\lambda) \,d \lambda}{\mathcal{Z}_{ \sqrt{2} \beta}}
\end{eqnarray*}
for $0 \leq s < t \leq1$ and $x,y \in\R$. Here $\mathcal{Z}_{\beta
}(s,y;t,x)$ is a random field determined by solutions to the stochastic
heat equation with multiplicative noise, and again it has an explicit
Wiener chaos expansion.
\end{itemize}

As $\beta$ varies, it is believed that the stationary processes
$A_{\beta}(x)$ interpolate between a Gaussian process as $\beta\to0$
and the $\Ai_2$ process as $\beta\to\infty$. Convergence to $\Ai_2$
is currently only known on the level of the one-point marginal
distributions, which were shown in \cite{ACQ,SpohnSasa:prl} to
converge to Tracy--Widom GUE. This interpolation property justifies the
name crossover and emphasizes the importance of the process $A_{\beta}(x)$.

The rest of this paper is organized as follows. In the next section we
give a precise formulation of our main results and sketch the main
ideas behind the proofs. In Section~\ref{sec:notation} we provide some
background material on white noise and stochastic integration, and in
Section~\ref{U_stat_section} we develop the theory of $U$-statistics on a
space--time domain. These theorems form the main technical component of
our paper. In Section~\ref{part_function_convergence_section} we use
the $U$-statistics results to prove Theorem~\ref{Zn_convergence_theorem}.
The proofs of Theorems~\ref{p2p_zn_convergence_theorem} and \ref
{theorem:4_param_convergence} follow in Sections~\ref{sec:rllt} and
\ref
{sec:4_param_field}, although they are very similar to what is done in
Section~\ref{part_function_convergence_section}. The proofs of the
tightness for Theorems~\ref{p2p_zn_convergence_theorem} and~\ref
{theorem:4_param_convergence} are based on standard SPDE arguments
adapted to our situation, and we defer them until the \hyperref[app]{Appendix}.
We end
the main text in Section~\ref{sec:conclusion} with some remarks and
ideas for future work.

\section{Formulation of main results}
\label{sec:results}

We begin this section with a brief explanation of why the $n^{-1/4}$
scaling is the appropriate one, and then proceed with precise
formulations of the results and some ideas of the proofs.

\subsection{Critical scaling}\label{sec2.1}

It is not immediately obvious why $n^{-1/4}$ should be the critical
scaling for intermediate disorder, and there is more than one heuristic
explanation that can be given; see \cite{CLdR}, for example. The
simplest one is in terms of the partition function. Under the
$n^{-1/4}$ scaling it has the form
\[
Z_n^{\omega} \bigl(\beta n^{-{1}/{4}} \bigr) = \Pl \bigl[
\exp \bigl\{ \beta n^{-
{1}/{4}}H_n^{\omega}(S) \bigr\}
\bigr].
\]
Expanding the exponential as a Taylor series and keeping only the terms
up to order $n^{-1/4}$ gives
\[
Z_n^{\omega} \bigl(\beta n^{-{1}/{4}} \bigr) \approx\Pl
\Biggl[ 1 + \beta n^{-{1}/{4}}\sum_{i=1}^n
\omega(i, S_i) \Biggr] = 1 + \beta n^{-{1}/{4}}\sum
_{i=1}^n \sum_{x \in\Z}
\omega(i,x) \Pl(S_i = x).
\]
By computing the variance of the right-hand side it is easily checked
that the $n^{-1/4}$ scaling keeps the random term bounded. In fact, as
the $\omega(i,x)$ variables are i.i.d. with mean zero and variance one,
it is a simple exercise with characteristic functions to show that
\[
\beta n^{-{1}/{4}}\sum_{i=1}^n \sum
_{x \in\Z} \omega(i,x) \Pl (S_i = x)
\mathop{\longrightarrow}^{(d)} N \bigl(0, \sigma^2 \bigr),
\]
where $\sigma^2 = 2 \beta^2 / \sqrt{\pi}$. Hence, up to first-order at
least, the partition function converges in law under the $n^{-1/4}$
scaling. In fact the same is also true of the higher-order terms. One
can simply expand the exponential into a full power series, switch the
expectation on paths with the summation and then analyze each term
individually. For technical reasons, however, it is much easier to make
the $e^{x} \approx1+x$ approximation first and consider instead the
slightly modified partition function
%
\begin{equation}
\label{Zbar_def} \symz_n^{\omega} \bigl(\beta n^{-{1}/{4}}
\bigr) = \Pl \Biggl[ \prod_{i=1}^n \bigl(
1 + \beta n^{-{1}/{4}}\omega(i, S_i) \bigr) \Biggr].
\end{equation}
This partition function is the one that was originally introduced and
studied (without any scaling) in the seminal papers \cite
{ImSpen:diffusion,bolthausen:note} for random $\pm1$ environment
variables. The advantage of the $\symz_n^{\omega}$ partition function
is that it
can be more easily analyzed by expanding the product along each path,
leading to
%
\begin{equation}
\label{Zbar_expansion1} \symz_n^{\omega} \bigl(\beta n^{-{1}/{4}}
\bigr) = \Pl \Biggl[ 1 + \sum_{k=1}^n
\beta^k n^{-{k}/{4}} \sum_{\vi
\in D_k^n} \prod
_{j=1}^k \omega(\vi_j,
S_{\vi_j}) \Biggr].
\end{equation}
Here $D_k^n$ is the discrete integer simplex
\[
D_k^n = \bigl\{ \vi= (i_1, \ldots,
i_k) \in\N^k \dvtx 1 \leq i_1 < \cdots<
i_k \leq n \bigr\}.
\]
For each $\vi\in D_k^n$ we now average over the possible
configurations of the random walk path at those times. By the Markov
property for simple random walk, the probability of each configuration
is given by the usual product of the transition kernels. Hence
%
\begin{eqnarray}
\label{Zbar_expansion2} &&\symz_n^{\omega} \bigl(\beta n^{-{1}/{4}}
\bigr)
\nonumber
\\[-8pt]
\\[-8pt]
\nonumber
&&\qquad= 1 + \sum_{k=1}^n \beta
^k n^{-{k}/{4}} \sum_{\vi
\in
D_k^n} \sum
_{\vx\in\Z^k} \prod_{j=1}^k
\omega(\vi_j, \vx_j) p(\vi_j -
\vi_{j-1}, \vx_j - \vx_{j-1}),
\end{eqnarray}
where $\vi_0 = \vx_0 = 0$, and $p(i,x) = \Pl(S_i = x)$ is the simple
random walk transition kernel. This is the full expansion of $\symz
_n^{\omega}
(\beta n^{-1/4})$ into terms of all orders, and it is possible to
analyze each order individually. The $k = 1$ term we have already shown
converges to a normal random variable. Unfortunately it is not as easy
to write down the limiting distribution of the individual $k > 1$ terms
of the summation, but an explicit form of the limiting variables can
easily be guessed. Very roughly speaking, if one scales space and time
diffusively then the random walk transition probabilities in \eqref
{Zbar_expansion2} approach the transition probabilities for a Brownian
motion, and the environment variables $\omega$ on $\Z_+ \times\Z$
begin to look a white noise on $\R_+ \times\R$. The sums then become
multiple integrals (over free space and ordered time) of the white
noise weighted by the transition kernels for Brownian motion. This type
of multiple stochastic integral is the $k$th order term of a Wiener
chaos expansion.

\subsection{Main results and ideas of the proof}

In this section we formulate our main results and provide sketches of
the proofs. We believe that these sketches provide sufficient insight
for many readers, and the formal proofs can be found beginning in
Section~\ref{U_stat_section}. The power series expansion of the previous section brings us
to the following theorem:

\begin{theorem}\label{Zn_convergence_theorem}
We have the following:
\begin{itemize}
\item Assume that the $\omega$ variables have mean zero and variance
one. Then as $n \to\infty$,
\[
\symz_n^{\omega} \bigl(\beta n^{-{1}/{4}} \bigr)
\mathop{\longrightarrow}^{(d)} \mathcal {Z}_{\sqrt{2} \beta}.
\]

\item Assuming that $\lambda(\beta) < \infty$ for $\beta$ small (with
or without the normalizations on the mean and variance), we have the convergence
\[
e^{-n \lambda(\beta n^{-{1}/{4}})} Z_n^{\omega} \bigl(\beta n^{-
{1}/{4}}
\bigr) \mathop{\longrightarrow}^{(d)} \mathcal {Z}_{\sqrt{2} \beta}.
\]

\item The limiting variable $\mathcal{Z}_{\sqrt{2} \beta}$ can be
identified as the sum of multiple stochastic integrals given by
%
\begin{equation}
\label{Zn_wiener_chaos} \mathcal{Z}_{\sqrt{2} \beta}:= 1 + \sum
_{k=1}^{\infty} ( \sqrt {2} \beta )^k \int
_{\Delta_k} \int_{\R^k} \prod
_{i=1}^k W(t_i, x_i)
\varrho (t_i - t_{i-1}, x_i -
x_{i-1}) \,dx_i \,dt_i.
\end{equation}
Here $W(t,x)$ is a white noise on $\R_+ \times\R$ with covariance
$\E[ W(t,x) W(s,y) ] = \delta(t-s) \delta(x-y)$, $\Delta_k = \{
0 =
t_0 < t_1 < t_2 < \cdots< t_k \leq1 \}$ is the $k$-dimensional
simplex, $x_i \in\R$ with $x_0 = 0$, and $\varrho$ is the standard
Gaussian heat kernel
\[
\varrho(t,x) = \frac{e^{-x^2/2t}}{\sqrt{2 \pi t}}.
\]
\end{itemize}
\end{theorem}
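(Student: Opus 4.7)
The plan is to start from the explicit polynomial chaos expansion (\ref{Zbar_expansion2}) for $\barz(\bn)$, recognize each fixed-order contribution as a generalized $U$-statistic whose kernel is built out of the SRW transition function, and match it term by term to the corresponding Wiener chaos piece in (\ref{Zn_wiener_chaos}). Once term-by-term convergence is in hand, a uniform-in-$n$ $L^2(\envP)$ bound on the remainder upgrades this to convergence in distribution of the full series, and the passage from $\barz(\bn)$ to $e^{-n\lambda(\bn)} Z_n^{\omega}(\bn)$ is handled at the end by a Taylor-expansion argument.

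For each fixed $k$ set
\begin{align*}
U_k^n \;:=\; \beta^k n^{-k/4} \sum_{\vi \in D_k^n} \sum_{\vx \in \Z^k} \prod_{j=1}^k \omega(\vi_j, \vx_j)\, p(\vi_j - \vi_{j-1}, \vx_j - \vx_{j-1}).
\end{align*}
The space-time $U$-statistics invariance principle of Section \ref{U_stat_section} reduces convergence of $U_k^n$ to its Gaussian counterpart plus a Lindeberg-type moment condition. Under the diffusive rescaling $\vi_j = \lfloor n t_j\rfloor$, $\vx_j = y_j\sqrt{n}$, the local CLT for SRW gives
\begin{align*}
\frac{\sqrt{n}}{2}\, p\bigl(\vi_j - \vi_{j-1},\, \vx_j - \vx_{j-1}\bigr) \;\longrightarrow\; \varrho(t_j - t_{j-1},\, y_j - y_{j-1}),
\end{align*}
the factor $2$ absorbing the parity constraint that $S_i$ and $i$ agree $\bmod\,2$. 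Converting the inner sum over $\vx$ into a Riemann integral with effective spacing $2/\sqrt{n}$ produces an extra $(\sqrt{2})^k$ relative to naively treating $n^{-1/4}\omega$ as a white-noise increment, which is what promotes the prefactor from $\beta^k$ to $(\sqrt{2}\beta)^k$ in the limit.

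The main technical obstacle is the uniform tail bound on $\sum_{k > K} U_k^n$. Under the mean-zero, unit-variance hypothesis the monomials in (\ref{Zbar_expansion2}) are pairwise orthogonal in $L^2(\envP)$, and
\begin{align*}
\envP\bigl[(U_k^n)^2\bigr] \;=\; \beta^{2k} n^{-k/2} \sum_{\vi \in D_k^n} \sum_{\vx \in \Z^k} \prod_{j=1}^k p(\vi_j - \vi_{j-1}, \vx_j - \vx_{j-1})^2.
\end{align*}
The inner spatial sum collapses by Chapman--Kolmogorov to the return probability $p(2(\vi_j - \vi_{j-1}),0) \sim 1/\sqrt{\pi(\vi_j - \vi_{j-1})}$, and recognizing the remaining time sum as a Riemann sum for $\int_{\Delta_k}\prod_j (t_j-t_{j-1})^{-1/2}\,dt_j$ reproduces the Itô-isometry value of the $k$-th Wiener chaos term, uniformly bounded in $n$ by $C(\beta)^{k}/\Gamma(k/2+1)$. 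Summability in $k$ yields both square-integrability of $\Zh_{\sqrt{2}\beta}$ and the remainder estimate $\envP[(\sum_{k>K} U_k^n)^2] \to 0$ as $K \to \infty$, uniformly in $n$, completing the first and third bullets.

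For the second bullet, rewrite
\begin{align*}
e^{-n\lambda(\bn)} Z_n^{\omega}(\bn) \;=\; \Pl\!\left[\prod_{i=1}^n \bigl(1 + \widetilde\omega(i, S_i)\bigr)\right], \qquad \widetilde\omega(i,x) := e^{\bn \omega(i,x) - \lambda(\bn)} - 1.
\end{align*}
The family $\widetilde\omega$ is i.i.d.\ in $(i,x)$, mean zero, with second moment $e^{\lambda(2\bn)-2\lambda(\bn)} - 1 = \bn^2(1+o(1))$ and higher moments controlled by the assumption $\lambda(\beta)<\infty$ near zero. Rerunning the previous argument with $\widetilde\omega$ in place of $\bn\omega$, the $U$-statistics invariance principle sees only the second moments and the Lindeberg condition, so the mean and variance normalizations on $\omega$ become unnecessary and the same limit $\Zh_{\sqrt{2}\beta}$ is obtained.
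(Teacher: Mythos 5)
Your proposal is correct and follows essentially the same route as the paper: term-by-term identification of the polynomial chaos \eqref{Zbar_expansion2} with the Wiener chaos via the $U$-statistics invariance principle and the local CLT (including the parity factor producing $\sqrt{2}\beta$), a uniform-in-$n$ $L^2$ tail bound with the $1/\Gamma(k/2+1)$ decay to pass to the full series, and the exponential tilting $e^{\bn\omega-\lambda(\bn)}=1+\bn\tilde\omega_n$ to transfer the result to $Z_n^{\omega}$. Your Chapman--Kolmogorov computation of $\envP[(U_k^n)^2]$ is the same estimate the paper isolates in Lemma \ref{lemma:discrete_bound}, and the comparison-to-the-integral step there is exactly the uniformity you need for your Riemann-sum bound.
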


\begin{remark*}
Note that the convergence of $\symz_n^{\omega}$ only requires two
moments for the
environment variables; higher moments are not needed. The requirement
that the variables have mean zero and variance one is only a
normalization condition.
\end{remark*}

\begin{remark*}
For the second statement on convergence of $Z_n^{\omega}$ we assumed
that the $\omega$ have exponential moments, but we do not believe that
this is necessary. In the conclusions we discuss our conjecture that
six moments are sufficient for the convergence to go through.
\end{remark*}

\begin{remark*}
This theorem is in contrast to what is expected for the strong disorder
case. Observe that
\[
\log Z_n^{\omega} \bigl(\beta n^{-{1}/{4}} \bigr) - n
\lambda \bigl(\beta n^{-{1}/{4}} \bigr)
\]
converges (in law) as $n \to\infty$; hence it is an immediate
corollary that $\chi= 0$ under intermediate disorder.
\end{remark*}

Readers familiar with Gaussian Hilbert spaces will immediately
recognize \eqref{Zn_wiener_chaos} as a Wiener chaos expansion. A good
source for background material on Gaussian Hilbert spaces and Wiener
chaos, and one that we will draw on throughout this work, is \cite
{Janson:GHS}. We include a brief background in Section~\ref{sec:results}. The
distribution of this particular Wiener chaos \eqref{Zn_wiener_chaos}
series is not known, though there are some intriguing conjectures \cite
{calledou:KPZ}. Nonetheless it is still a very concrete expression to
manipulate and study. Using the concept of Wick products (see \cite
{Janson:GHS}), we can rewrite \eqref{Zn_wiener_chaos} as
%
\begin{equation}
\label{Zbeta_def} \mathcal{Z}_{\sqrt{2} \beta} = \El_0 \biggl[: \exp:
\biggl\{ \sqrt{2} \beta\int_0^1
W(s,B_s) \,ds \biggr\} \biggr].
\end{equation}
The expectation $\El_0$ is over $1$-dimensional Brownian paths started
at the origin. This shorthand is mostly formal since the integral of
white noise over a Brownian path is not defined on a path-by-path
basis. The procedure that is really indicated by \eqref{Zbeta_def} is
to expand the exponential in a power series and then switch the
expectation over paths with the summation of the series. Formally this
produces the same series as in \eqref{Zn_wiener_chaos}, except that one
uses the Wick exponential $: \exp:$ as a reminder that powers of
integrals should be expanded via the rule
\[
: \biggl( \int_0^1 W(s, B_s) \,ds
\biggr)^k\dvtx = k! \int_{\Delta_k} \prod
_{j=1}^k W(t_j, B_{t_j})
\,dt_j
\]
(recall $\Delta_k$ is the $k$-dimensional simplex). With this in mind
it is easily checked that \eqref{Zn_wiener_chaos} and \eqref{Zbeta_def}
are the same, and \eqref{Zbeta_def} should be viewed simply as
shorthand for the well-defined Wiener chaos \eqref{Zn_wiener_chaos}.
The $\sqrt{2}$ factor in the exponential is a (rather annoying) feature
of the periodicity of simple random walk. It can be seen as a
manifestation of the factor of two in the local limit theorem
%
\begin{equation}
\label{llt} p(n,x) = \frac{2}{\sqrt{2 \pi n}} e^{-{x^2}/{(2n)}} + O
\bigl(n^{-{3}/2} \bigr) = \frac{2}{\sqrt{n}} \varrho \bigl(1,xn^{-{1}/2}
\bigr) + O \bigl(n^{-{3}/2} \bigr)
\end{equation}
for $x$ and $n$ of the same parity. Each of the $k$ terms in the
product part of \eqref{Zbar_expansion2} contributes an extra factor of
two in the variance when scaled diffusively, which causes the switch
from $\beta$ to $\sqrt{2} \beta$ in going from the discrete partition
function to the continuum one.

Our proof of Theorem~\ref{Zn_convergence_theorem} essentially follows
the strategy that we have already outlined. First we show that, for
each fixed $k$, the $k$th term in the expansion \eqref{Zbar_expansion2}
converges to the $k$th term of the Wiener chaos \eqref
{Zn_wiener_chaos}. We find it convenient to use the techniques of
\textit{U-statistics} \cite{DynkinMandelbaum:MSI,Gine:st_flour,Janson:GHS}, where such results are the main focus. The general problem
begins with an i.i.d. sequence of real-valued random variables $X_1,
X_2, \ldots,$ and a symmetric function $f \dvtx \R^k \to\R$. One of the
main goals is to find limit theorems for sequences of the form
\[
n^{-k \gamma} \sum_{\vi\in D_k^n} f(X_{\vi_1},
\ldots, X_{\vi_k})
\]
as $n \to\infty$. Here $f$ is thought of as an observable of $k$
variables, and the summation is over all possible random observations
that can be drawn from the set of $n$ variables. A weight function $g \dvtx D_k^n \to\R$ may be added, leading to the study of sums
\[
n^{-k \gamma} \sum_{\vi\in D_k^n} g(\vi)
f(X_{\vi_1}, \ldots, X_{\vi_k}).
\]
This is called an \textit{asymmetric statistic}, and for each $k$ the
terms in \eqref{Zbar_expansion2} have this form. In Section~\ref{U_stat_section} we prove a technical lemma showing that the asymmetric
statistics of~\eqref{Zbar_expansion2} converge to the multiple
stochastic integrals of \eqref{Zn_wiener_chaos}, which is the main step
in the proof.

We will also implicitly make use of an extension of $U$-statistics called
\textit{U-processes} \cite{NolanPollard:Uproc_SLLN,NolanPollard:Uproc_FCLT}. The process is formed by varying the weight
functions $g$ through a given set while keeping the realization of the
random variables fixed. In this paper we will deal with families of
functions $g_x$ indexed by $x \in\R$; an asymmetric $U$-process is then
given by
\[
x \mapsto n^{-k \gamma} \sum_{\vi\in D_k^n}
g_x(\vi) f(X_{\vi_1}, \ldots, X_{\vi_k}).
\]
Limiting $U$-process can then be constructed by taking $n \to\infty$,
and the limits take the form of multiple stochastic integrals of white
noise over space and time, with the noise weighted by the kernels $g_x$.

For our purposes this more general framework is useful for studying the
limit of the \textit{point-to-point} partition function, defined in the
following way
%
\begin{equation}
\label{p2p_function} Z_n^{\omega} \bigl(x; \beta n^{-{1}/{4}}
\bigr) = \Pl \bigl[ \exp \bigl\{ \beta n^{-{1}/{4}}H_n^{\omega}(S)
\bigr\} \mathbf{1} \{ S_n = x \} \bigr].
\end{equation}
Through this object we can write the polymer endpoint measure as
%
\begin{equation}
\label{endpoint_density} \Pl_{n,\beta_n}^{\omega}(S_n = x) =
\frac{Z_n^{\omega}(x; \beta
n^{-{1}/{4}}
)}{Z_n^{\omega}(\beta n^{-{1}/{4}})}.
\end{equation}
As above, it is more convenient to consider the modified point-to-point
partition function
\begin{eqnarray*}
\symz_n^{\omega} \bigl(x; \beta n^{-{1}/{4}} \bigr)&:=&
\Pl \Biggl[ \prod_{i=1}^n \bigl( 1 + \beta
n^{-{1}/{4}}\omega(i, S_i) \bigr) \mathbf{1} \{ S_n
= x \} \Biggr]
\\
&=& \Pl \Biggl[  \prod_{i=1}^n \bigl( 1
+ \beta n^{-{1}/{4}}\omega(i, S_i) \bigr) \Big| S_n = x
\Biggr] p(n,x).
\end{eqnarray*}
Now the expectation is over walks conditioned to be at $x$ at time $n$,
and this changes the Markov transition kernel that weights the noise.
As was done in \eqref{Zbar_expansion2}, it is easily computed that
%
\begin{eqnarray}
\label{p2p_expansion} &&\Pl \Biggl[  \prod_{i=1}^n
\bigl( 1 + \beta n^{-{1}/{4}}\omega(i, S_i) \bigr)\Big |
S_n = x \Biggr]
\nonumber
\\[-8pt]
\\[-8pt]
\nonumber
&&\qquad= 1 + \sum_{k=1}^n
\beta^k n^{-{k}/{4}} \sum_{\vi\in D_k^n} \sum
_{\vx
\in\Z
^k} p_x^n(\vi, \vx)
\prod_{j=1}^k \omega(\vi_j,
\vx_j),
\end{eqnarray}
where $p_x$ is the transition kernel
%
\begin{eqnarray}
\label{rw_bridge_kernel} p_x^n(\vi, \vx) &=& \Pl(
S_{\vi_1} = \vx_1, \ldots, S_{\vi_k} = \vx
_k | S_n = x)
\nonumber
\\[-8pt]
\\[-8pt]
\nonumber
& =& \frac{p(n - \vi_k, x - \vx_k)}{p(n,x)} \prod
_{j=1}^k p(\vi _j -
\vi_{j-1}, \vx_j - \vx_{j-1})
\end{eqnarray}
for random walks conditioned to be at position $x$ at time $n$. Under
diffusive scaling this transition kernel converges to the one for
Brownian bridges from zero to a fixed endpoint; the same scaling for
the point-to-point partition function leads to:

\begin{theorem}\label{p2p_zn_convergence_theorem} The following is true:
\begin{itemize}
\item Under the assumption that the $\omega$ have six moments, with
mean zero and variance one, the process
\[
x \mapsto\Pl \Biggl[  \prod_{i=1}^n
\bigl( 1 + \beta n^{-{1}/{4}}\omega (i, S_i) \bigr) \Big|
S_n = x \sqrt{n} \Biggr]
\]
converges weakly (under the topology of the supremum norm on bounded
continuous functions) to the processs
%
\begin{equation}
\label{Zbeta_x_def} x \mapsto e^{A_{\sqrt{2} \beta}(x)}:= \mathbb{E} \biggl[: \exp: \biggl
\{ \sqrt {2} \beta\int_0^1 W(s,
X_s + xs) \,ds \biggr\} \biggr],
\end{equation}
where the expectation $\mathbb{E}$ is over Brownian bridges $X_t$ that
go from zero to zero in time one. See equation \eqref{Zbeta_x_chaos1}
for a formal definition of \eqref{Zbeta_x_def}.

\item Under the assumption that the $\omega$ satisfy $\lambda(\beta) <
\infty$ for $\beta$ sufficiently small, we have
%
\begin{equation}
\label{Zbeta_on_parabola} e^{-n \lambda(\beta n^{-1/4})}
 \frac{\sqrt{n}}{2} Z_n^{\omega
}
\bigl(x\sqrt {n}; \beta n^{-{1}/{4}} \bigr) \mathop{\longrightarrow}^{(d)}
e^{A_{\sqrt{2} \beta
}(x)} \frac
{e^{-x^2/2}}{\sqrt{2\pi}},
\end{equation}
where the topology is the supremum norm on bounded continuous functions.

\item Under the latter moment assumptions we also have the random local
limit theorem
%
\begin{equation}
\label{random_llt} \biggl\{ x \mapsto\frac{\sqrt{n}}{2} \frac{Z_n^{\omega}(\beta n^{-{1}/{4}}; x\sqrt {n})}{Z_n^{\omega}
(\beta n^{-{1}/{4}})} \biggr\}
\mathop{\longrightarrow}^{(d)} \biggl\{ x \mapsto\frac
{1}{\mathcal{Z}_{\sqrt{2} \beta}} e^{A_{\sqrt{2} \beta}(x)} \rho (1,x)
\biggr\},
\end{equation}
in the same topology.
\end{itemize}
\end{theorem}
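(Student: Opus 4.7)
My plan is to handle the three bullets as follows: (i) run the same U-statistics/Wiener-chaos machinery used for Theorem \ref{Zn_convergence_theorem}, but upgraded to a U-\emph{process} indexed by the endpoint $x$; (ii) derive the point-to-point statement by combining (i) with the local central limit theorem \eqref{llt}; (iii) derive the random local limit theorem by dividing by the (jointly convergent) partition function and appealing to continuous mapping.

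For (i), expand $\barz(x\sqrt{n};\bn)$ via \eqref{p2p_expansion}--\eqref{rw_bridge_kernel}; the $k$-th chaos is an asymmetric U-statistic in the i.i.d.\ environment weighted by the conditioned kernel
\[
p_{x\sqrt n}^n(\vi,\vx) \;=\; \frac{p(n-\vi_k,\, x\sqrt n - \vx_k)}{p(n, x\sqrt n)} \prod_{j=1}^k p(\vi_j - \vi_{j-1}, \vx_j - \vx_{j-1}).
\]
Under the diffusive rescaling $\vi_j = n t_j$, $\vx_j = \sqrt n\, y_j$, the local limit theorem \eqref{llt} shows that $(\sqrt n/2)^k p_{x\sqrt n}^n$ converges in $L^2(\Delta_k \times \R^k)$ to the $k$-point density of the Brownian bridge from $0$ to $x$ over $[0,1]$, which by a deterministic shift equals the $k$-point density of $X_t + xt$ with $X$ a standard $0\to0$ bridge; this matches the kernel defining $A_{\sqrt{2}\beta}(x)$ in \eqref{Zbeta_x_def}. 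Finite-dimensional in $x$ convergence of each chaos is then immediate from the U-statistics lemma of Section \ref{U_stat_section}, and the same $\ell^2$ summability of chaos norms that controls the sum in Theorem \ref{Zn_convergence_theorem} (now uniformly in $x$ on compacts) lets us pass to the full series.

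The main obstacle, and the true content of the theorem, is upgrading this to weak convergence in the sup-norm topology on $C(K)$ for $K\subset\R$ compact. For tightness I would prove a Kolmogorov-type bound
\[
\El\!\left[\, \bigl|\barz(x\sqrt n; \bn) - \barz(y\sqrt n; \bn)\bigr|^{2p} \,\right] \;\leq\; C_{p,K}\, |x-y|^{p}
\]
uniformly in $n$, for $p$ large enough to close the Kolmogorov continuity criterion. This is done chaos by chaos: the difference $p_{x\sqrt n}^n - p_{y\sqrt n}^n$ is, modulo an error from \eqref{llt}, a telescoping difference of Gaussian heat kernels in the spatial variable, which yields a gain of $|x-y|$ in the $L^2$ norm of the kernel; one then converts the $L^2$ bound to a $2p$-th moment bound using a Rosenthal/hypercontractivity-type inequality for asymmetric U-statistics. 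This is precisely where the six-moment hypothesis is used, as a substitute for the hypercontractivity enjoyed by Gaussian chaoses; controlling the constants $C_{p,K}$ uniformly in $k$ (so that the summed chaos expansion still satisfies the Kolmogorov bound) is the single most delicate step, because the bridge kernel is mildly singular near $t_k = 1$ and requires a weighted estimate.

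With (i) in hand, bullet (ii) is a short step: writing
\[
\tfrac{\sqrt n}{2}\, Z_n^\omega(x\sqrt n; \bn) \;=\; \tfrac{\sqrt n}{2}\, p(n, x\sqrt n)\; \barz(x\sqrt n; \bn) \cdot(\text{exp/product correction}),
\]
the local CLT \eqref{llt} gives $(\sqrt n/2) p(n, x\sqrt n) \to \varrho(1,x)$ uniformly on compact $x$-sets, and the passage from $\barz$ to $e^{-n\lambda(\bn)} Z_n^\omega$ is carried out exactly as in the second bullet of Theorem \ref{Zn_convergence_theorem} under the exponential-moment assumption. Finally, bullet (iii) follows because the numerator of \eqref{random_llt} and the denominator $\barz(\bn)$ (which converges by Theorem \ref{Zn_convergence_theorem}) are continuous functionals built from the \emph{same} environment and random walk, so by expressing both in their joint chaos expansions one obtains joint convergence to $\bigl(\varrho(1,x) e^{A_{\sqrt 2 \beta}(x)},\, \Zh_{\sqrt 2 \beta}\bigr)$ on the same white-noise probability space; since $\Zh_{\sqrt 2 \beta} > 0$ almost surely (it is a Wick exponential), division is continuous at the limit and the continuous mapping theorem delivers \eqref{random_llt}.
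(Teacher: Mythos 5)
Your treatment of the finite-dimensional convergence, the passage from $\barz$ to $e^{-n\lambda(\bn)}Z_n^{\omega}$ via the tilted field $\tilde{\omega}_n$, and the division step for \eqref{random_llt} (joint chaos convergence of numerator and denominator plus continuity of division at the a.s.\ positive limit) all coincide with what the paper does in Section \ref{sec:rllt}. The divergence is entirely in the tightness. The paper does \emph{not} prove a chaos-by-chaos Kolmogorov bound: it observes that the point-to-point field satisfies the discrete stochastic heat equation \eqref{eqn:tightness_SHE}, writes the Duhamel representation \eqref{78}, establishes the a priori moment bounds of Lemma \ref{apriori}, and then gets the modulus-of-continuity estimate (Lemma \ref{modcons}) from Burkholder's inequality applied to the martingale increments $\omega(s+1/n,y)\overline{z}_n(s,y)$, H\"older, and heat-kernel estimates, finishing with Garsia's lemma. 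This bounds all chaos orders at once and the six-moment hypothesis enters only as the requirement $M>6$ in the H\"older exponents. Your alternative is attractive in that it stays inside the U-statistics framework, but the step you flag as delicate is genuinely the crux and is not routine: with only six moments the $2p$-th moment ($2p\leq 6$) of a degree-$k$ inhomogeneous chaos is \emph{not} controlled by the $L^2$ norm of its kernel alone --- Rosenthal-type inequalities produce additional mixed-norm terms (e.g.\ $\sum_i |c_i|^{6}\,Q|\omega|^6$ already at $k=1$) which must be shown to be lower order using the lattice spacing, and the resulting constants must be tracked uniformly in $k$ against the $1/\Gamma(k/2)$ decay of $\|\varrho_{k|x}\|_{L^2}$, with the bridge kernel's singularity at $t_k=1$ compounding the bookkeeping. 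None of this is carried out in your sketch, so as written there is a gap exactly where the paper's SPDE argument does its work; if you want to avoid importing the appendix machinery you would need to state and prove that moment inequality for asymmetric U-statistics as a standalone lemma.
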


\begin{remark*}
For the first statement of the theorem the requirement of six moments
is purely technical and only used in the proof of the tightness.
Convergence of finite dimensional distributions to \eqref{Zbeta_x_def}
requires only two moments, and we believe that this is all that should
be required for the tightness. This particular six moments assumption,
however, is \textit{not} related to the conjecture for convergence of
the $Z_n^{\omega}$.
\end{remark*}

\begin{remark*}
The discrete processes in this theorem are technically not continuous
functions, but we implicitly assume we are linearly interpolating
between the integer values of $x \sqrt{n}$. To keep the notation simple
we do not write this.
\end{remark*}

The $\beta$-indexed family of processes \eqref{Zbeta_x_def} is
interesting in its own right, and we understand a significant amount
about it. The reasons for writing $A_{\beta}$ in the exponential will
soon be apparent. First note that the expectation \eqref{Zbeta_x_def}
is to be interpreted, as with \eqref{Zbeta_def}, as convenient notation
for a particular Wiener chaos expansion. In this case the transition
kernel for Brownian bridges is used instead of the one for Brownian
motion, hence
%
\begin{eqnarray}
\label{Zbeta_x_chaos1}\quad&& e^{A_{\sqrt{2} \beta}(x)}\nonumber\\
&&\qquad = 1 + \sum_{k=0}^{\infty}
(\sqrt{2} \beta)^k \int_{\Delta_k} \int
_{\R^k} \frac{\varrho(1-t_k, x-x_k)}{\varrho(1,x)}
\\
&&\hspace*{114pt}\qquad{}\times\prod_{i=1}^k
W(t_i, x_i) \varrho(t_i -
t_{i-1}, x_i - x_{i-1}) \,dt_i
\,dx_i.\nonumber
\end{eqnarray}
This Wiener chaos expansion is the proper definition of how the process
varies with~$x$, and clearly shows the connection with $U$-processes. As
$x$ varies it is the weight function of the white noise that changes
and this defines the process. However, the probabilistic shorthand
\eqref{Zbeta_x_def}, while only formal, provides a much more intuitive
description than \eqref{Zbeta_x_chaos1}. Observe that \eqref
{Zbeta_x_def} only uses \textit{one} Brownian bridge from zero to zero
and then simply adds the appropriate drift to create a bridge to a
different endpoint. This is, of course, a very simple coupling of
Brownian bridges with the same starting point but different endpoints.
The simplicity is useful, however, and the shorthand \eqref
{Zbeta_x_def} has the advantage of making certain facts immediately
obvious, such as:

\begin{proposition}\label{Zbeta_stationary_prop}
The process $x \mapsto A_{\beta}(x)$ is stationary.
\end{proposition}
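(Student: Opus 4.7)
The plan is to derive stationarity from the shift-invariance of space-time white noise under time-dependent linear spatial translations. For a fixed $y\in\R$, I would introduce the shifted field $\tilde W(s,z):=W(s,z+ys)$. The first step is to check that $\tilde W\stackrel{d}{=}W$: both are centered Gaussian generalized fields, and their covariances agree since formally $\mathbb{E}[\tilde W(s,z)\tilde W(s',z')]=\delta(s-s')\delta(z-z'+y(s-s'))=\delta(s-s')\delta(z-z')$, the drift $y(s-s')$ being supported on the diagonal $\{s=s'\}$ where it vanishes. Equivalently, the map $T_y\colon f(s,z)\mapsto f(s,z-ys)$ is an $L^2$-isometry of $L^2(\R_+\times\R)$, so the isonormal Gaussian processes generated by $W$ and $\tilde W$ have the same joint law.

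The second step is to note that, at the level of the informal shorthand \eqref{Zbeta_x_def}, the identity $X_s+(x+y)s=(X_s+xs)+ys$ gives
\begin{align*}
e^{A_{\sqrt{2}\beta}(x+y)}=\mathbb{E}\left[\wp{\exp}\left\{\sqrt{2}\beta\int_0^1\tilde W(s,X_s+xs)\,ds\right\}\right],
\end{align*}
which is precisely the same functional of $\tilde W$ that defines $e^{A_{\sqrt{2}\beta}(x)}$ from $W$. Since $\tilde W\stackrel{d}{=}W$ and the functional is deterministic in the noise, this yields the equality in joint distribution of the processes $x\mapsto A_{\sqrt{2}\beta}(x+y)$ and $x\mapsto A_{\sqrt{2}\beta}(x)$.

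To make the above argument rigorous I would perform the shift order by order on the Wiener chaos series \eqref{Zbeta_x_chaos1}. Denoting by $f_k^x(t_1,z_1,\ldots,t_k,z_k):=\varrho(1-t_k,x-z_k)\varrho(1,x)^{-1}\prod_{i=1}^k\varrho(t_i-t_{i-1},z_i-z_{i-1})$ the $k$-th order deterministic kernel, a short computation using $\varrho(\tau,z+y\tau)=\varrho(\tau,z)\exp(-zy-y^2\tau/2)$ and telescoping cancellations yields the kernel identity
\begin{align*}
f_k^{x+y}(t_1,z_1,\ldots,t_k,z_k)=f_k^x(t_1,z_1-yt_1,\ldots,t_k,z_k-yt_k).
\end{align*}
This is the analytic counterpart of the fact that a Brownian bridge from $0$ to $x+y$ equals a bridge from $0$ to $x$ plus the drift $ys$. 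Combining this identity with the change of variables $z_i\mapsto z_i+yt_i$ under the $k$-fold white noise integral gives the distributional identity $I_k^W(f_k^{x+y})=I_k^{\tilde W}(f_k^x)$ term by term, and summing the chaos series produces the desired equality of laws. The main (and essentially only) subtlety is the rigorous implementation of $W\mapsto\tilde W$ as a measure-preserving transformation on the white noise space, but this reduces precisely to the $L^2$-isometry property of $T_y$ noted above.
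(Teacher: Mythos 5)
Your argument is correct and is essentially the paper's own proof: both rest on the observation that the shear $(s,z)\mapsto(s,z+ys)$ preserves Lebesgue measure, so the sheared field $W(s,z+ys)$ is again a white noise, and the shorthand \eqref{Zbeta_x_def} then exhibits $e^{A_\beta(x+y)}$ as the same functional of the sheared noise. Your additional chaos-by-chaos verification of the kernel identity is a correct (and welcome) fleshing-out of the same idea rather than a different route.
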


\begin{pf}
Fix $\theta\in\R$, and define $W_{\theta}(t,x):= W(t, x + \theta
t)$. As the linear transformation $(t,x) \mapsto(t, x + \theta t)$ has
determinant $1$, we immediately have that $W_{\theta}$ is also a white
noise on $\R_+ \times\R$ with the same covariance function as $W$. Then
\begin{eqnarray*}
x \mapsto e^{A_{\beta}(x + \theta)} &=& \mathbb{E} \biggl[: \exp: \biggl\{ \beta \int
_0^1 W(s, X_s + xs + \theta s) \,ds
\biggr\} \biggr]
\\
&=& \mathbb{E} \biggl[: \exp: \biggl\{ \beta\int_0^1
W_{\theta}(s, X_s + xs) \,ds \biggr\} \biggr]
\end{eqnarray*}
has the same law as the original process.
\end{pf}

Combining Proposition~\ref{Zbeta_stationary_prop} with Theorem~\ref
{p2p_zn_convergence_theorem} produces an analogue of a well-known
result from the polynuclear growth model \cite
{PrahoferSpohn:airy_process}. Taking logarithms of \eqref
{Zbeta_on_parabola} leads to
%
\begin{eqnarray}
\label{logthing} &&\log Z_n^{\omega} \bigl( x\sqrt{n};\beta
n^{-{1}/{4}} \bigr) - n\lambda \bigl(\beta n^{-1/4} \bigr)+
\frac
{1}{2} \log(n/4)
\nonumber
\\[-8pt]
\\[-8pt]
\nonumber
&&\qquad \to A_{\sqrt{2}\beta}(x) - \frac{x^2}{2} - \log
\sqrt {2 \pi}.
\end{eqnarray}
The right-hand side is a stationary process around a parabola, which is
similar to what was first encountered for the PNG droplet \cite
{PrahoferSpohn:airy_process}. There the stationary process was
Airy$_2$, and it was shown that it has the Tracy--Widom distribution
$F_{\mathrm{GUE}}$ as its one-point marginal distribution. On the level of
one-point marginal distributions there is a much stronger connection
between $A_{\beta}$ and $\Ai_2$ (although nothing yet has been
rigorously proved on the process level; see \cite{ACQ} for a
conjecture). Recent results derived by Amir, Corwin and Quastel \cite
{ACQ}, and independently by Spohn and Sasamoto \cite{SpohnSasa:KPZ,SpohnSasa:prl}, give a formula for the one-point marginal distribution
of the process $A_{\beta}$ and how this distribution scales as $\beta$
goes to zero or infinity, specifically as $\beta\to\infty$ the
one-point marginals of $A_{\beta}$ converge to those of $\Ai_2$:

\begin{proposition}[(\cite{ACQ})]\label{ACQ_theorem}
For $\beta> 0$ and $x \in\R$,
%
\begin{eqnarray}
\label{df} G_{\beta}(s)&:=& \P \bigl( A_{\sqrt{2} \beta}(x) + 2
\beta^4/3 \leq s \bigr)
\nonumber
\\[-8pt]
\\[-8pt]
\nonumber
&=& 1 - \int e^{-e^{-r}} f \bigl( s - \log
\bigl(32 \pi\beta^4 \bigr)/2 - r \bigr) \,dr,
\end{eqnarray}
with
\[
f(r) = \kappa_{\beta}^{-1} \det(I - K_{\sigma_{\beta}}) \mathrm
{tr} \bigl((I - K_{\sigma_{\beta}})^{-1} P_{\Ai} \bigr),
\]
where $\kappa_{\beta} = 2 \beta^{4/3}$, and $K_{{\sigma}_{\beta}}$ and
$P_{\Ai}$ are operators acting on $L^2(\kappa_{\beta}^{-1} r, \infty)$
given by their kernels
\begin{eqnarray*}
P_{\Ai}(x,y) &=& \Aifunc(x) \Aifunc(y),
\\
K_{\sigma_{\beta}}(x,y) &=& \operatorname{P.V.} \int\sigma_{\beta}(t) \Aifunc
(x+t) \Aifunc(y+t) \,dt,
\end{eqnarray*}
with $\sigma_{\beta}(t) = (1 -e^{-\kappa_{\beta}t})^{-1}$. Moreover,
the distribution functions $G_{\beta}$ satisfy the asymptotic relations
%
\begin{equation}
\label{gueasymp} G_{\beta} \bigl( 2^{{4}/3} \beta^{{4}/3} s
\bigr) \mathop{\longrightarrow}^{\beta\to \infty} F_{\mathrm{GUE}} \bigl( 2^{{1}/3} s
\bigr)
\end{equation}
and
\[
G_{\beta} \bigl( 2^{{1}/2} \pi^{{1}/4} \beta s \bigr)
\mathop{\longrightarrow}^{\beta \to 0} \int_{-\infty}^s
\frac{e^{-x^2/2}}{\sqrt{2\pi}} \,dx.
\]
\end{proposition}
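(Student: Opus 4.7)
The plan is to identify $e^{A_{\sqrt{2}\beta}(x)}\varrho(1,x)$ with the narrow--wedge fundamental solution of the stochastic heat equation (SHE) and then invoke the one--point distribution formula for the KPZ equation from \cite{ACQ, SpohnSasa:prl}. Comparing the Wiener chaos series \eqref{Zbeta_x_chaos1}, once multiplied by $\varrho(1,x)$, term by term with the Picard iteration of the mild solution to
\begin{equation*}
\partial_t \mathcal{Z} = \tfrac{1}{2}\partial_x^2 \mathcal{Z} + \sqrt{2}\,\beta\, \mathcal{Z}\, \dot{W}, \qquad \mathcal{Z}(0,\cdot) = \delta_0,
\end{equation*}
one checks that $e^{A_{\sqrt{2}\beta}(x)}\varrho(1,x) = \mathcal{Z}(1,x)$: the kernel $\varrho(1-t_k, x-x_k)$ in the bridge decomposition is the free propagator from the last noise point to $(1,x)$, the intermediate factors $\varrho(t_j - t_{j-1}, x_j - x_{j-1})$ are the propagators between consecutive noise points, and multiplying by $\varrho(1,x)$ cancels the denominator in \eqref{Zbeta_x_chaos1}.

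Next I would use the standard Brownian scaling of SHE to convert the problem of computing the law of $\log \mathcal{Z}(1,x)$ at coupling $\sqrt{2}\beta$ into the law of the narrow--wedge KPZ height function at a time $t_\beta \propto \beta^4$ in the canonical normalization used in \cite{ACQ}. Under this correspondence, $\kappa_\beta = 2 \beta^{4/3}$ plays the role of the standard KPZ scaling parameter $t^{1/3}$, the shift $2\beta^4/3$ is the deterministic drift of $\log \mathcal{Z}$ once the Gaussian parabola $x^2/2$ has been absorbed, and the term $\log(32\pi\beta^4)/2$ is the Jacobian from $\mathcal{Z}(t_\beta,0)$ to $\log \mathcal{Z}(t_\beta,0)$. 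Substituting these correspondences into the Amir--Corwin--Quastel one--point formula for narrow--wedge KPZ then produces \eqref{df} as stated, with the Fredholm determinant and trace expression arising from the contour--integral representation of the $e^{-e^{-r}}$ Gumbel convolution on the right--hand side of \eqref{df}.

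For the asymptotics, the same scaling sends $\beta \to \infty$ to the long--time KPZ regime $t_\beta \to \infty$, where $\sigma_\beta(t) = (1 - e^{-\kappa_\beta t})^{-1}$ converges to $\mathbf{1}_{\{t > 0\}}$ at a rate dominating all polynomial corrections. Consequently $K_{\sigma_\beta}$ degenerates to the Airy projector, and a saddle--point evaluation of the Gumbel integral in \eqref{df} turns the Fredholm determinant into $F_{GUE}$ after the $2^{4/3}\beta^{4/3}$ rescaling, yielding \eqref{gueasymp}. Conversely, $\beta \to 0$ corresponds to the weak--noise regime $t_\beta \to 0$: the leading contribution to \eqref{Zbeta_x_chaos1} is the first--chaos Gaussian term, whose variance computation (carried out as in the ``Critical Scaling'' subsection) gives exactly the $\sqrt{2}\,\pi^{1/4}\beta$ scaling and the standard normal limit in the statement. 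The main obstacle here is not any step of this translation dictionary, but the derivation of the exact Fredholm--determinant formula itself, which is the substantive content of \cite{ACQ, SpohnSasa:prl} and requires the weak--universality scaling limit of ASEP together with a delicate replica--type computation cast in Fredholm form.
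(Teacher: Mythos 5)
The paper offers no proof of this proposition---it is imported verbatim from \cite{ACQ} (and \cite{SpohnSasa:prl}), with only the remark that it is derived by steepest-descent analysis of the Tracy--Widom ASEP formulas---and your proposal takes essentially the same route: you defer the Fredholm-determinant formula and both asymptotics to \cite{ACQ} after setting up the translation dictionary. Your identification of $e^{A_{\sqrt{2}\beta}(x)}\varrho(1,x)$ with the narrow-wedge (delta initial data) chaos solution of the stochastic heat equation at coupling $\sqrt{2}\beta$ is exactly the correspondence the authors record in the remark following Theorem \ref{theorem:4_param_convergence} and in the companion paper \cite{AKQ:CRP}, so nothing further is needed beyond the citation.
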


This result is derived using steepest descent analysis on a
Tracy--Widom formula~\cite{TracyWidom:asep_fredholm_represent,TracyWidom:asep_integral_formulas,TracyWidom:asep_step_condition} for
the Asymmetric Simple Exclusion Process (ASEP). The exact form of the
$G_{\beta}$ distributions are not used in this paper, but the
asymptotics are important and suggest how the polymer scalings behave
as $\beta\to\infty$.

In particular, combining (\ref{logthing}), (\ref{df}) and (\ref
{gueasymp}), we have
the following corollary, which is the first general result for
Tracy--Widom asymptotics for polymers at nonzero temperature:

\begin{corollary}[(Weak universality for directed random polymers in
$1+1$ dimensions)]
Assume that $\omega$ are i.i.d. with $\lambda(\beta) < \infty$ for
$\beta$ sufficiently small. Then
as $n\to\infty$ followed by $\beta\to\infty$,
\[
\frac{\log Z_n^{\omega}(0; \beta n^{-{1}/{4}}) - n\lambda(\beta
n^{-1/4}) +\log
\sqrt {\pi n/2} +2\beta^4/3}{2\beta^{4/3}} \mathop{\longrightarrow}^{(d)} F_{ \mathrm{GUE}}.
\]
\end{corollary}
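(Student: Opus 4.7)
The plan is to deduce the corollary directly from Theorem \ref{p2p_zn_convergence_theorem} and Proposition \ref{ACQ_theorem} by composing two distributional limits in the prescribed order: first $n \to \infty$ with $\beta$ fixed, then $\beta \to \infty$.

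First I would specialize equation (\ref{Zbeta_on_parabola}) at $x = 0$, which yields
\begin{align*}
e^{-n\lambda(\beta n^{-1/4})} \, \frac{\sqrt{n}}{2} \, Z_n^{\omega}(0; \bn) \xrightarrow{(d)} \frac{e^{A_{\sqrt{2}\beta}(0)}}{\sqrt{2\pi}}
\end{align*}
as $n \to \infty$. Since the limiting random variable is almost surely positive and finite, $\log$ is continuous at the limit, and the continuous mapping theorem permits taking logarithms on both sides. Rewriting $\log(\sqrt{n}/2) + \log\sqrt{2\pi} = \log\sqrt{\pi n/2}$ then gives
\begin{align*}
\log Z_n^{\omega}(0; \bn) - n\lambda(\beta n^{-1/4}) + \log\sqrt{\pi n/2} \xrightarrow{(d)} A_{\sqrt{2}\beta}(0),
\end{align*}
which is exactly the $x=0$ case of (\ref{logthing}).

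Second, I would invoke Proposition \ref{ACQ_theorem}. From the definition of $G_\beta$, the random variable $A_{\sqrt{2}\beta}(0) + 2\beta^4/3$ has distribution $G_\beta$, hence
\begin{align*}
\P\!\left( \frac{A_{\sqrt{2}\beta}(0) + 2\beta^4/3}{2\beta^{4/3}} \leq s \right) = G_\beta(2\beta^{4/3} s) = G_\beta\!\left(2^{4/3}\beta^{4/3} \cdot 2^{-1/3} s\right),
\end{align*}
and the asymptotic relation (\ref{gueasymp}) sends this to $F_{GUE}(2^{1/3} \cdot 2^{-1/3} s) = F_{GUE}(s)$ as $\beta \to \infty$. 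Adding $2\beta^4/3$ to the centering in the previous display and dividing through by $2\beta^{4/3}$, then passing first to $n \to \infty$ and afterwards to $\beta \to \infty$, establishes the convergence claimed.

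There is no substantive obstacle: the argument is algebraic bookkeeping wrapped around two results already in hand. The only points that require mild care are that the limit $e^{A_{\sqrt{2}\beta}(0)}/\sqrt{2\pi}$ is almost surely positive, so that the logarithm is well defined and continuous at the limit, and that the limits are iterated rather than joint --- so no uniformity in $\beta$ of the $n \to \infty$ convergence is needed, and the two asymptotics may be chained without further input.
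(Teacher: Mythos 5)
Your proposal is correct and follows exactly the route the paper itself indicates (it states the corollary is obtained "combining (\ref{logthing}), (\ref{df}) and (\ref{gueasymp})"): specialize \eqref{Zbeta_on_parabola} at $x=0$, take logarithms via the continuous mapping theorem to get \eqref{logthing}, and then chain the iterated $\beta\to\infty$ limit through $G_\beta(2\beta^{4/3}s)=G_\beta\bigl(2^{4/3}\beta^{4/3}\cdot 2^{-1/3}s\bigr)\to F_{GUE}(s)$. The algebraic bookkeeping (including $\log\sqrt{\pi n/2}=\log(\sqrt{n}/2)+\log\sqrt{2\pi}$ and the cancellation of the constants) checks out, and your remarks on positivity of the limit and on the limits being iterated rather than joint are the right points of mild care.
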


In the concluding remarks of Section~\ref{sec:conclusion} we discuss
our conjecture that this statement holds whenever the $\omega$ have six
moments, and is false otherwise. The corollary above should be compared
to the following conjecture (see \cite{MR2335697} for the $5$ moment
assumption):

\begin{conjecture}[(Strong universality for directed random polymers in
$1+1$ dimensions)]
Assume that $\omega$ are i.i.d. with $5$ moments. Then there are
constants $c = c(\beta)$ and $\sigma= \sigma(\beta)$ such that as
$n\to
\infty$,
\[
\frac{\log Z_n^{\omega}(0; \beta) - c(\beta)n}{\sigma(\beta) n^{1/3}}
\mathop{\longrightarrow}^{(d)} F_{\mathrm{GUE}}
\]
for all $\beta> 0$.
\end{conjecture}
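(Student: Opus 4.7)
The plan is to attack the conjecture by the now-standard \emph{integrability + universality} two-step paradigm: first establish the Tracy-Widom limit for a single exactly solvable reference environment, then extend to general five-moment $\omega$ by a Lindeberg-type coordinate-by-coordinate exchange. The five-moment assumption is natural for this template, since a fourth-order Taylor expansion leaves one remainder term controlled by the fifth moment, exactly as in the Tao-Vu four-moment theorem for Wigner matrices. Note that this is orthogonal to the $n^{-1/4}$ scaling used throughout the present paper: the conjecture is about fixed $\beta>0$ and fluctuations of order $n^{1/3}$, so the intermediate-disorder machinery provides conceptual guidance but not a direct input.

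For the integrable step, the natural choice is the log-gamma polymer \cite{Timo:exponent}, whose partition function should admit Fredholm-determinant formulas for its Laplace transform via a tropical RSK / Whittaker functional identity. Steepest descent in the critical window $\log Z_n = c(\beta) n + \sigma(\beta) n^{1/3} X$ should produce $X \xrightarrow{(d)} F_{GUE}$ for this particular disorder, simultaneously pinning down $c(\beta), \sigma(\beta)$ for the reference model and rigorously identifying $\chi = 1/3$. With the reference $\tilde\omega$ in hand, order the sites $(i,x) \in \{1,\ldots,n\} \times \Z$ and form interpolating environments $\omega^{(k)}$ that coincide with $\omega$ on the first $k$ sites and with $\tilde\omega$ thereafter. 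Telescoping
\[
\log Z_n^{\omega}(\beta) - \log Z_n^{\tilde\omega}(\beta) = \sum_k \Big( \log Z_n^{\omega^{(k)}}(\beta) - \log Z_n^{\omega^{(k-1)}}(\beta) \Big)
\]
and expanding each single-site increment to fourth order in $\omega(i,x)$, the derivatives $\partial_{\omega(i,x)}^j \log Z_n^\omega$ can be expressed as mixed cumulants of $\Pl_{n,\beta}^{\omega}(S_i = x)$ under the quenched polymer measure. Matching the first four moments of $\omega$ against those of a suitably shifted and rescaled copy of $\tilde\omega$ eliminates the leading contributions in expectation, and the remainder is a single sum whose size is governed by the fifth moment and by $\max_{(i,x)} \El[\Pl_{n,\beta}^{\omega}(S_i = x)^5]$.

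The principal obstacle --- and the reason this statement remains a conjecture --- is the a priori localization estimate required to close the swap. Pushing the remainder below the Tracy-Widom scale $n^{1/3}$ demands a quenched bound of roughly $\Pl_{n,\beta}^{\omega}(S_i = x) \lesssim n^{-2/3+o(1)}$ for typical $(i,x)$, which is essentially the path exponent $\zeta = 2/3$ one is ultimately trying to verify; meanwhile the known bounds of Carmona-Hu and Comets et al \cite{CH:Gaussian_partition, ComSY:localization_strong} point in the opposite direction, giving only $\max_x \Pl_{n,\beta}^{\omega}(S_n = x) \geq c_0 > 0$. A workable strategy therefore needs either a bootstrap that improves the localization bound in step with the swap, or an independent input --- modified second-moment methods or multiscale renormalization in the spirit of \cite{chatterjee:exponents, ad:exponents} --- that yields $\chi = 1/3$ unconditionally for general $\omega$. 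The intermediate-disorder tools of the present paper do not seem to break this deadlock: they operate at $\beta_n \to 0$, whereas the conjecture concerns $\beta$ fixed, and the $\beta \to \infty$ asymptotics of the crossover distributions $G_\beta$ only recover $F_{GUE}$ as an iterated limit, not as the single limit $n \to \infty$ at fixed $\beta$.
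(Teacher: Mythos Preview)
The statement you are addressing is explicitly labeled a \emph{Conjecture} in the paper; the authors do not attempt a proof and offer no argument beyond contrasting it with the weak-universality corollary that precedes it. There is therefore no proof in the paper to compare your proposal against.

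Your write-up is not a proof either, and to your credit you say so: you outline the integrability-plus-Lindeberg template and then correctly isolate the obstruction, namely that closing the swap at the $n^{1/3}$ scale requires a quenched delocalization estimate of order $n^{-2/3+o(1)}$ on the one-site polymer mass, which is essentially equivalent to the exponent $\zeta=2/3$ one is trying to establish. This circularity is exactly why the statement remains open, and your diagnosis of it is accurate. Your remark that the intermediate-disorder machinery of the paper gives only the iterated limit $n\to\infty$ then $\beta\to\infty$, rather than the single $n\to\infty$ limit at fixed $\beta$, is also the authors' own point of view: that is precisely the distinction they draw between the weak-universality corollary and this conjecture.

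In short, there is no gap to name relative to the paper because the paper contains no proof; your proposal is a reasonable research strategy that honestly identifies its own missing ingredient.
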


Finally we consider the transition probabilities for the polymer. It is
worth emphasizing that, given the environment field $\omega$, the
polymer measure $\Pl_{n, \beta}^{\omega}$ is Markov, and therefore it
is uniquely determined by its transition probabilities. These
transition probabilities are functions of the environment, and the
inhomogeneous nature of the environment means that the probabilities
are inhomogeneous in space and time. For $0 \leq m < k \leq n$ and $x,y
\in\Z$ we define the four-parameter field $Z^{\omega}(m,y;k,x;\beta
)$ by
%
\begin{equation}
\label{defn:4_param_field} Z^{\omega}(m,y;k,x;\beta) = \Pl \Biggl[  \exp \Biggl\{
\beta \sum_{i=m+1}^{k} \omega(i,
S_i) \Biggr\} \mathbf{1} \{ S_k = x \} \Big|
S_m = y \Biggr],
\end{equation}
which is a point-to-point partition function for a polymer starting at
position $y$ at time $m$. For the point-to-line versions we introduce
the notation
%
\begin{equation}
\label{defn:p2l_param_field} Z^{\omega}(m,y;k,*;\beta) = \sum
_{x \in\Z} Z^{\omega
}(m,y;k,x;\beta).
\end{equation}
It is then straightforward to verify that for $0 \leq m < k \leq n$ the
polymer has transition probabilities given by
%
\begin{eqnarray}
\label{polymer_trans_prob} &&\Pl_{n, \beta}^{\omega}(S_{i+1} =
S_i \pm1 | S_1, \ldots, S_i)
\nonumber
\\[-8pt]
\\[-8pt]
\nonumber
&&\qquad =
\frac{1}{2} e^{\beta\omega(i+1, S_i \pm1)} \frac{Z^{\omega}(i+1, S_i
\pm1; n, *; \beta)}{Z^{\omega}(i, S_i; n, *; \beta)}.
\end{eqnarray}
The polymer is clearly Markov since the equation on the right only
depends on~$S_i$. More importantly though, this equation shows that the
four-parameter field uniquely determines the polymer measure. Our final
theorem describes its scaling limit as $n \to\infty$. As before we
initially work with the modified partition function
%
\begin{equation}
\label{defn:4_param_modified} \symz^{\omega}(m,y;k,x;\beta) = \Pl \Biggl[  \prod
_{i=m+1}^k \bigl( 1 + \beta \omega(i,
S_i) \bigr) \mathbf{1} \{ S_k = x \} \Big| S_m
= y \Biggr]
\end{equation}
and then transfer the results to the usual exponential form.

\begin{theorem}\label{theorem:4_param_convergence} The following is true:
\begin{itemize}
\item Assuming that the $\omega$ have six moments with mean zero and
variance one, the fields
\[
(s,y;t,x) \to\frac{\sqrt{n}}{2} \symz^{\omega} \bigl(ns, y \sqrt{n}; nt, x
\sqrt{n}; \beta n^{-1/4} \bigr)
\]
converge weakly as $n \to\infty$ to a random field $\mathcal
{Z}_{\sqrt
{2} \beta}(s,y;t,x)$. The topology is the sup norm on bounded
continuous functions with the domain $\{(s,y;t,x) \dvtx 0 \leq s < t \leq
1, x, y \in\R\}$.

\item Assuming that $\lambda(\beta) < \infty$ for small $\beta$,
the fields
\[
(s,y;t,x) \mapsto\frac{\sqrt{n}}{2} e^{-n(t-s) \lambda(\beta
n^{-1/4})} Z^{\omega} \bigl(ns, y
\sqrt{n}; nt, x \sqrt{n}; \beta n^{-1/4} \bigr)
\]
converge weakly to the same limit, under the same topology.

\item The limiting field $\mathcal{Z}_{\beta}(s,y;t,x)$ has the chaos
representation
\begin{eqnarray*}
\mathcal{Z}_{\beta}(s,y;t,x) &=& \varrho(s,y;t,x)
\\
&&{} + \sum_{k=1}^{\infty} \beta^k
\int_{\Delta_k(s,t]} \int_{\R^k} \prod
_{i=1}^k W(t_i, x_i)\\
&&\hspace*{85pt}\qquad{}\times
\varrho(t_i - t_{i-1}, x_i -
x_{i-1}) \\
&&\hspace*{85pt}\qquad{}\times\varrho(t - t_k, x - x_k)
\,dx_i \,dt_i,
\end{eqnarray*}
with $\Delta_k(s,t] = \{ s = t_0 < t_1 < \cdots< t_k < t \}$, and $x_i
\in\R$ with $x_0 = y$.
\end{itemize}
\end{theorem}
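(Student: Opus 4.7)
The plan closely parallels the proofs of Theorems \ref{Zn_convergence_theorem} and \ref{p2p_zn_convergence_theorem}. First, I would establish convergence of the finite dimensional distributions of the discrete field by combining the U-statistics theory of Section \ref{U_stat_section} with the local limit theorem \eqref{llt}. Second, I would prove tightness of the discrete fields in sup norm on compact subsets of the admissible domain $\{0 \le s < t \le 1\} \times \R^2$, using higher moment estimates on the partition function. Given these two ingredients, the first bullet follows by standard weak-convergence arguments, the second bullet follows from the first by the usual comparison between the modified and exponential partition functions after centering by $e^{n(t-s)\lambda(\bn)}$, and the chaos formula in the third bullet is read off directly from the finite dimensional limit.

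For the finite dimensional convergence, expanding \eqref{defn:4_param_modified} in the manner of \eqref{p2p_expansion}, but now conditional on $S_{ns} = y\sqrt{n}$ and $S_{nt} = x\sqrt{n}$, gives
\begin{align*}
\tfrac{\sqrt{n}}{2}\barzz(ns,y\sqrt{n};nt,x\sqrt{n};\bn) = \tfrac{\sqrt{n}}{2}p(n(t-s),\sqrt{n}(x-y)) + \sum_{k \ge 1}\beta^k n^{-k/4} S_k^{(n)}(s,y;t,x),
\end{align*}
where each $S_k^{(n)}$ is an asymmetric U-statistic weighted by a rescaling of the conditioned bridge kernel \eqref{rw_bridge_kernel}. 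Applying the local limit theorem shows that this kernel converges, uniformly after diffusive rescaling, to the Brownian bridge weight $\varrho(t_1-s,x_1-y)\cdots\varrho(t-t_k,x-x_k)/\varrho(t-s,x-y)$, and the U-statistics convergence results of Section \ref{U_stat_section} then deliver convergence of each $S_k^{(n)}$ to the corresponding $k$-fold multiple stochastic integral. Joint convergence at finitely many parameters is automatic because a single white noise on $\R_+\times\R$ underlies the whole expansion. A uniform-in-$n$ bound of the form $C^k/k!$ on the $L^2$ norm of $S_k^{(n)}$ allows truncation of the $k$-sum and passage to the limit by dominated convergence, identifying the limit with the claimed chaos.

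Tightness is the main obstacle, and is where the six-moment hypothesis enters. The strategy is a four-parameter Kolmogorov criterion, which demands $p$th moment bounds
\begin{align*}
\E\,|\tfrac{\sqrt{n}}{2}\barzz(s_1,y_1;t_1,x_1) - \tfrac{\sqrt{n}}{2}\barzz(s_2,y_2;t_2,x_2)|^p \le C\big(|t_1-t_2|^{\alpha} + |s_1-s_2|^{\alpha} + |x_1-x_2|^{\alpha'} + |y_1-y_2|^{\alpha'}\big)
\end{align*}
uniformly in $n$. Since the discrete chaoses are non-Gaussian, hypercontractivity is unavailable, and one must instead combine a Burkholder--Rosenthal type inequality for multilinear sums $\sum g(\vi,\vx)\prod \omega(\vi_j,\vx_j)$ with increment estimates on the discrete bridge kernel; six moments on $\omega$ are exactly what is needed to push through the moment arguments for the four-parameter Kolmogorov criterion with the Hölder exponents $1/2^{-}$ in space and $1/4^{-}$ in time predicted by the stochastic heat equation. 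These SPDE-style computations are what the authors defer to the appendix. The transfer $\barzz \to Z^\omega$ in the second bullet then uses $e^{\bn\omega} = e^{\lambda(\bn)}(1+\bn\omega)(1 + O(\bn^2\omega^2))$, so that after dividing by $e^{n(t-s)\lambda(\bn)}$ the multiplicative error is a product of nearly unit factors whose total logarithm is concentrated at $0$ by the law of large numbers applied to $\bn^2\omega_i^2$, and hence the ratio converges to $1$ in probability uniformly on compacts.
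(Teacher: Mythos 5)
Your overall architecture (finite-dimensional convergence via U-statistics plus the local limit theorem, tightness via uniform moment estimates, then a transfer from $\barzz$ to $Z^{\omega}$) matches the paper's, and your finite-dimensional step is essentially the paper's argument — the paper merely shortcuts the bridge-kernel expansion by noting that shift invariance gives $\barzz(ns,y\sqrt{n};nt,x\sqrt{n};\bn)\overset{d}{=}\barzz(0,0;n(t-s),(x-y)\sqrt{n};\bn)$ and reducing to the already-proved point-to-point case. However, your transfer step contains a genuine error. You factor $e^{\bn\omega}=e^{\lambda(\bn)}(1+\bn\omega)(1+O(\bn^2\omega^2))$ and claim the product of the error factors along the path has total logarithm concentrated at $0$ by the law of large numbers. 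But the LLN gives the opposite conclusion: $\sum_{i=1}^{n(t-s)}\bn^2\,\omega(i,S_i)^2$ is of order $\beta^2\sqrt{n}\to\infty$, not $o(1)$; even after the centering supplied by $e^{\lambda(\bn)}$ the per-site error is $\tfrac{1}{2}\bn^2(\omega^2-1)+O(\bn^3)$, whose sum along a fixed path has order-one fluctuations (variance $\sim n\cdot\bn^4=\beta^4$) and a deterministic drift of order $n\cdot\bn^3\sim n^{1/4}$ when $Q\,\omega^3\neq 0$. So the multiplicative error does \emph{not} tend to $1$ path by path, and the ratio of partition functions cannot be controlled this way. The cancellation that saves the theorem only appears after averaging over paths, and the paper implements it by defining $\tilde{\omega}_n$ exactly through $1+\bn\tilde{\omega}_n=e^{\bn\omega-\lambda(\bn)}$, so that $e^{-n(t-s)\lambda(\bn)}Z^{\omega}=\symz^{\tilde{\omega}_n}$ with no error factor at all, and then rerunning the chaos analysis for the triangular array $\tilde{\omega}_n$ (mean zero, variance $1+o(1)$, Lindeberg condition) via Theorem \ref{wn_ustat_extension} and Lemma \ref{discrete_to_cont_extended}. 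You need this (or an equivalent term-by-term variance comparison of the two discrete chaoses) in place of the LLN claim.

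On tightness your route also diverges from the paper's in a way worth noting. You propose a direct four-parameter Kolmogorov criterion using Rosenthal-type moment inequalities for the multilinear sums of each chaos order. The paper instead (i) reduces the four-parameter field to the two-parameter field by time-reversal of the walk and of the environment, and (ii) proves two-parameter tightness not from the chaos expansion but from the discrete Duhamel (mild) form of the lattice stochastic heat equation: the noise there enters linearly against $\overline{z}_n(s,y)$, which is independent of $\omega(s+1/n,\cdot)$, so a single discrete Burkholder inequality plus the a priori bound $E[z_n(s,y)^M]\le C_M\varrho(s,y)^M$ yields the increment estimates, with Garsia's lemma finishing the job; this is where $M>6$ moments enter. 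Your approach would require uniform-in-$k$ control of $M$-th moments of $k$-linear forms, which is substantially more delicate; the mild-form representation is what makes the six-moment hypothesis sufficient with a short argument.
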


\begin{remark*}
As in the remarks after Theorem~\ref{p2p_zn_convergence_theorem}, the
six moments assumption for $\symz_n^{\omega}$ is only used in the
tightness, but
we do not believe it to be necessary. Convergence of the finite
dimensional distributions of $\symz_n^{\omega}$ goes through with
only two
moments. Also, the discrete fields are defined only on the mesh where
$(ns, y \sqrt{n}; nt, x \sqrt{n})$ take integer values, but we use a
linear interpolation scheme to extend it to the whole space. A
particular method is outlined in the \hyperref[app]{Appendix}.
\end{remark*}

\begin{remark*} As is discussed in the companion paper \cite{AKQ:CRP},
the four-parameter field is the chaos solution to the stochastic
partial differential equation
\[
\partial_t \mathcal{Z}_{\beta} = \tfrac{1}2
\partial_{xx} \mathcal {Z}_{\beta} + \beta W
\mathcal{Z}_{\beta},\qquad \mathcal{Z}_{\beta}(s,y;s,x) =
\delta_0(x-y).
\]
This is the stochastic heat equation with multiplicative noise. The
logarithm of this field is the Hopf--Cole solution of the so-called KPZ
equation \cite{KPZ,BG,ACQ}. The continuum analogue of the field
\eqref
{defn:p2l_param_field} is the point-to-line partition function defined by
\[
\mathcal{Z}_{\beta}(s,y;t,*) = \int\mathcal{Z}_{\beta}(s,y;t,x)
\,dx.
\]
\end{remark*}

\begin{remark*}
From equation \eqref{polymer_trans_prob} it can easily be derived that
the multi-step polymer transition probabilities are
\[
\Pl_{n, \beta}^{\omega}(S_k = x | S_m = y) =
\frac{Z^{\omega}(m,y; k,x
;\beta) Z^{\omega}(k,x;n,*;\beta)}{Z^{\omega}(m, y; n, *; \beta)}.
\]
Combining this with Theorems \ref{Zn_convergence_theorem} and \ref
{p2p_zn_convergence_theorem} leads to the statement in the \hyperref[sec1]{Introduction} that
\[
\frac{\sqrt{n}}{2} \Pl_{n, \beta_n}^{\omega} ( S_{nt} = x
\sqrt {n} | S_{ns} = y \sqrt{n} ) \mathop{\longrightarrow}^{(d)} \frac{\mathcal{Z}_{\sqrt{2}
\beta}(s,y;t,x) \mathcal{Z}_{\sqrt{2} \beta}(t,x;1,*)}{\mathcal{Z}_{
\sqrt{2} \beta}}.
\]
\end{remark*}

\section{Wiener chaos}
\label{sec:notation}

\subsection{Brownian motion and simple random walk}
\label{subsec:brownian_terms}
Throughout we let $S_n$ be a simple random walk on $\Z$ and $B_t$ be a
standard Brownian motion on $\R$. For $i \in\N$ or $t \geq0$ and $x
\in\R$ let
\[
p(i,x) = \Pl(S_i = x), \qquad \varrho(t,x) = \frac{e^{-x^2/2t}}{\sqrt{2
\pi t}}.
\]
We will make heavy use of the joint probability densities of both
simple random walk and Brownian motion, for which we introduce the notation
\[
p_k(\vi, \vx) = \Pl( S_{\vi_1} = \vx_1, \ldots,
S_{\vi_k} = \vx _k ) = \prod_{j=1}^k
p(\vi_j - \vi_{j-1}, \vx_j -
\vx_{j-1})
\]
for $\vi\in D_k^n$, $\vx\in\R^k$. Here $D_k^n$ is the integer simplex
\[
D_k^n = \bigl\{ \vi\in[ n ]^k \dvtx 1 \leq
\vi_1 < \vi_2 < \cdots< \vi_k \leq n \bigr\},
\]
with $[n]:= \{1, 2, \ldots, n \}$. For $\vi\in\Z^k$ we sometimes
write $|\vi|$ as shorthand for the length $k$ of the vector.

The parity property of simple random walk, that it is only on the even
integers at even times and the odd integers at odd times, plays a role
in much of the technical analysis. We write $i \leftrightarrow x$ if
$i$ and $x$ are of the same parity, and for $\vi\in D_k^n, \vx\in\Z
^k$ we write $\vi\leftrightarrow\vx$ if $\vi_j \leftrightarrow\vx_j$
for $1 \leq j \leq k$. Given $x \in\R$ and $i \in\N$ we define
$[ x ]_i$ to be the closest element of $\Z$ to $x$ such that $i
\leftrightarrow x$. For a point $\vx\in\R^k$ and $\vi\in D_k^n$
define $[ \vx ]_{\vi} \in\Z^k$ by $( [ \vx ]_{\vi} )_k =
[ x_k ]_{i_k}$. It will often be useful to extend $p$ to all of $\R^k$,
so we also define
\[
\overline{p}_k(\vi, \vx) = 2^{-k} p_k \bigl(
\vi, [ \vx ]_{\vi} \bigr).
\]
The $2^{-|\vi|}$ factor normalizes $\overline{p}(\vi, \cdot)$ to be a
probability measure on $\R^k$. It is useful to observe that the
$\overline{p}(\vi, \vx)$ are the finite dimensional distributions for
the random walk $X_n = S_n + U_n$, where $\{ U_{i} \}_{i \geq1}$ is a
sequence of independent random variables uniformly distributed on $(-1,1)$.

For Brownian motion we use the analogous notation
\[
\varrho_k(\vt, \vx) = \prod_{j=1}^k
\varrho(\vt_j - \vt_{j-1}, \vx_j -
\vx_{j-1})
\]
for $\vt$ in the simplex
\[
\Delta_k = \bigl\{ \vt\in\R^k \dvtx 0 \leq
\vt_1 < \vt_2 < \cdots<\vt_k \leq1 \bigr\}.
\]

\subsection{\texorpdfstring{White noise and stochastic integration on $[0,1]\times\R$}
{White noise and stochastic integration on [0,1] x R}}
\label{wn_si_section}

\newcommand{\envW}{\mathbb{Q}}

In this section we briefly recall the elementary theory of white noise
and stochastic integration on the particular measure space $L^2([0,1]
\times\R, \B, dt \,dx)$. Here $\B$ is the $\sigma$-algebra of Borel
subsets, and $dt \,dx$ denotes Lebesgue measure on the space. We let $\B
_f$ be the subset of $\B$ consisting of sets of finite Lebesgue
measure. Observe that $\B= \sigma(\B_f)$ because Lebesgue measure is
$\sigma$-finite on the given space.

A white noise on $[0,1] \times\R$ is a collection of mean zero
Gaussian random variables defined on a common probability space
$(\Omega
_W, \F_W, \envW)$ and indexed by $\B_f$
\[
W = \bigl\{ W(A) \dvtx A \in\B_f \bigr\}.
\]
This means that every finite collection of the form $(W(A_1), \ldots,
W(A_k))$ has a $k$-dimensional Gaussian distribution, with mean zero
and covariance structure given by
\[
\E \bigl[ W(A)W(B) \bigr] = | A \cap B |.
\]
In particular if $A$ and $B$ are disjoint then $W(A)$ and $W(B)$ are
independent.

For $g \in L^2([0,1] \times\R, \B, dt \,dx)$ the stochastic integral
\[
I_1(g) = \int_0^1 \int g(t,x)
W(dt \,dx)
\]
is constructed by first defining $I_1$ on simple functions via
\[
I_1 \Biggl( \sum_{i=1}^n
\alpha_i \textbf{1}_{A_i} \Biggr) = \sum
_{i=1}^n \alpha_i
W(A_i),
\]
where $A_i \in\B_f$, and then using the density of simple functions in
$L^2([0,1] \times\R)$ and the completeness of $L^2(\Omega_W, \F_W,
\envW)$ to define $I_1(g)$. The construction shows that $I_1$ is linear
in the sense that for all $\alpha_1, \ldots, \alpha_n \in\R$ and $g_1,
\ldots, g_n \in L^2([0,1] \times\R)$ we have, with probability one,
\[
I_1 \Biggl( \sum_{i=1}^n
\alpha_i g_i \Biggr) = \sum
_{i=1}^n \alpha_i
I_1(g_i).
\]
For each $g$ we have that $I_1(g) \sim N(0, \Vert g\Vert _{L^2}^2)$, and
moreover $I_1$ preserves the Hilbert space structure of $L^2([0,1]
\times\R)$,
\[
\E \bigl[ I_1(g) I_1(h) \bigr] = \int
_0^1 \int g(t,x) h(t,x) \,dt \,dx.
\]

Now we define multiple stochastic integrals on $L^2([0,1]^k \times\R
^k)$ for $k > 1$. For notation we use
\[
I_k(g) = \int_{[0,1]^k} \int_{\R^k}
g(\vt, \vx) W^{\otimes k}(d\vt \,d\vx)
\]
for $g \in L^2([0,1]^k \times\R^k)$. The construction is similar to
the $k=1$ case except that mild care must be given to integration along
the ``diagonals'' of the space. Moreover, the stochastic integral is
only truly defined for symmetric functions on $[0,1]^k \times\R^k$.
Here $g$ is symmetric if $g(\vt, \vx) = g(\pi\vt, \pi\vx)$ for all
$(\vt, \vx) \in[0,1]^k \times\R^k$ and $\pi\in S_k$, the group of
permutations on $\{1, 2, \ldots, k \}$. The permutations act on vectors
in the obvious way: $\pi\vt= (\vt_{\pi1}, \ldots, \vt_{\pi k})$ and
$\pi\vx$ defined similarly. We let $L^2_S([0,1]^k \times\R^k)$ denote
the subspace of symmetric functions in $L^2$. As in the $k=1$ case it
is enough to define $I_k$ on a dense subset of $L_S^2$ from which it
can be linearly extended (in a unique way) to the entire space. One
dense subset is the functions of the form
%
\begin{equation}
\label{dense_g_products} g(\vt, \vx) = \sum_{\pi\in S_k} \prod
_{j=1}^k \mathbf{1} \bigl\{ (\vt
_{\pi j}, \vx_{\pi j}) \in A_{j} \bigr\},
\end{equation}
where the $A_j, j=1, \ldots, k$ are disjoint subsets of $[0,1] \times
\R
$. We define
%
\begin{equation}
\label{multiple_integral_def} I_k(g) = k! \prod_{j=1}^k
W(A_j).
\end{equation}
It is standard to show that there exists a unique linear extension of
$I_k$ from functions of the form \eqref{dense_g_products} onto $L^2_S$
such that each $I_k(g)$ is a mean zero random variable with variance
$\Vert g\Vert _{L^2}^2$, and the covariance structure is
\[
\E \bigl[ I_k(g) I_k(h) \bigr] = \langle g,h
\rangle_{L^2([0,1]^k \times\R^k)}.
\]
Moreover, relation \eqref{multiple_integral_def} can be extended to
show that if $g_1, \ldots, g_k \in L^2([0,1] \times\R)$ are all
orthogonal to each other, then
%
\begin{equation}
\label{prod_of_disjoint_g} I_k \Biggl( \sum_{\pi\in S_k}
\prod_{j=1}^k g_j(
\vt_{\pi j}, \vx_{\pi
j}) \Biggr) = k! \prod
_{j=1}^k I_1(g_j).
\end{equation}
Finally, we also adopt the convention that $I_k$ extends to
nonsymmetric functions $g \in L^2([0,1]^k \times\R^k)$ via
symmetrization. We define
\[
I_k(g) = I_k(\sym g),
\]
where
\[
\sym g(\vt,\vx) = \frac{1}{k!} \sum_{\pi\in S_k} g(
\pi\vt, \pi \vx).
\]

\begin{remark*}
Suppose that $g \dvtx \Delta_k \times\R^k \to\R$ instead. We extend it to
a function on $[0,1]^k \times\R^k$ by defining it to be zero for $\vt
\notin\Delta_k$. Then
\[
I_k(g) = \int_{[0,1]^k} \int_{\R^k}
\sym g(\vt, \vx) W^{\otimes k}(d \vt\, d \vx) = k! \int_{\Delta_k}
\int_{\R^k} g(\vt, \vx) W^{\otimes
k}(d \vt\, d \vx)
\]
since symmetrizing simply ``copies'' the functions into the $k!$
permutations of $\Delta_k$ that make up $[0,1]^k$, ignoring the
diagonals which do not affect the stochastic integral anyways.
\end{remark*}

\subsection{\texorpdfstring{Wiener chaos on $[0,1]\times\R$}{Wiener chaos on [0,1] x R}}

In the context of this paper Wiener chaos may be regarded as a way of
representing random variables as infinite sums of multiple stochastic
integrals. For every random variable $X \in L^2(\Omega_W, \F_W, \envW
)$, the Wiener chaos decomposition states that there is a unique
sequence of symmetric functions $g_k \in L_S^2([0,1]^k \times\R^k), k
\geq1$, such that
\[
X = \sum_{k=0}^{\infty} I_k(g_k).
\]
Here $g_0$ is simply a constant and $I_0(g_0) = g_0$. In fact as the $k
\geq1$ terms of the chaos series are all mean zero, $g_0$ must be the
mean of $X$. Moreover, by the orthogonality of $I_{k_1}(g_1)$ and
$I_{k_2}(g_2)$ for $k_1 \neq k_2$ we have the relation
\[
\E \bigl[ X^2 \bigr] = \sum_{k=0}^{\infty}
\Vert g_k\Vert _{L^2([0,1]^k \times\R^k)}^2.
\]
The situation works in reverse also. Given any element of the symmetric
Fock space over $L^2([0,1] \times\R)$, that is,
\[
g = (g_0, g_1, g_2, \ldots) \in\bigoplus
_{k=0}^{\infty} L_S^2
\bigl([0,1]^k \times\R^k \bigr),
\]
the map $I \dvtx \bigoplus_{k=0}^{\infty} L_S^2([0,1]^k \times\R^k) \to
L^2(\Omega_W, \F_W, \envW)$ defined by $I(g) =  \sum_{k \geq0} I_k(g_k)$
is an isometry. The norm on the Fock space is
\[
\Vert g\Vert _F^2 = \sum_{k=0}^{\infty}
\Vert g_k\Vert _{L^2([0,1]^k \times\R^k)}^2.
\]

\subsection{Wiener chaos for Brownian transition probabilities}
\label{brownian_chaos_section}

The Brownian transition probabilities are easily shown to define an
element of the Fock space $\bigoplus_{k \geq0} L^2(\Delta_k \times\R
^k)$. Define
\[
\bolds{\varrho}(\beta) = \bigl(1, \beta\varrho_1,
\beta^2 \varrho_2, \ldots \bigr),
\]
where the $\varrho_k$ are defined in Section~\ref{sec2.1}. For all $\beta\in
\R
$ it is easily computed that $\bolds{\varrho}(\beta)$ belongs to the
Fock space, since
\[
\varrho_k(\vt, \vx)^2 = \varrho_k(\vt,
\sqrt{2} \vx) \prod_{j=1}^k
\frac
{1}{\sqrt{2 \pi(\vt_j - \vt_{j-1})}},
\]
and hence
\begin{eqnarray*}
\int_{\Delta_k} \int_{\R^k}
\varrho_k(\vt, \vx)^2 \,d \vx\, d \vt&=& (4 \pi
)^{-{k}/{2}} \int_{\Delta_k} \prod
_{j=1}^k \frac{1}{\sqrt{\vt
_j -
\vt_{j-1}}} \,d\vt
\\
&=& (4 \pi)^{-{k}/{2}} B \biggl( \frac{1}{2}, \frac{1}{2},
\ldots, \frac
{1}{2}, 1 \biggr)
=\frac{1}{2^k \Gamma( ({k}/{2}) + 1 )}.
\end{eqnarray*}
The second equality comes from recognizing that the integrand is the
density of the Dirichlet distribution, for which the beta function $B$
is the normalizing constant. Recalling that the beta function is the
ratio of Gamma functions \cite{AbSteg} produces the final expression,
and the extremely fast decay of this expression in $k$ clearly shows
that $\Vert \bolds{\varrho}(\beta)\Vert _F^2 < \infty$ for all $\beta\in
\R$.
This gives the following:

\begin{lemma}
The Wiener chaos $\mathcal{Z}_{\beta}$ has the representation
$\mathcal
{Z}_{\beta} = I(\bolds{\varrho}(\beta))$.
\end{lemma}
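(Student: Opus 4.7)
The statement is essentially a restatement of the definition of $\mathcal{Z}_\beta$ in \eqref{Zn_wiener_chaos} in the language of the Wiener chaos isometry $I$, so my plan is mainly one of careful bookkeeping against the conventions laid out in Section~\ref{wn_si_section}.

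First I would verify that $\mathbf{\varrho}(\beta) = (1, \beta\varrho_1, \beta^2 \varrho_2, \ldots)$ genuinely lies in the symmetric Fock space $\bigoplus_{k\geq 0} L^2_S([0,1]^k \times \R^k)$, so that $I(\mathbf{\varrho}(\beta))$ is well-defined as an element of $L^2(\Omega_W)$. This is precisely the content of the computation displayed immediately before the lemma: one recognizes the time integrand as a Dirichlet density to evaluate $\int_{\Delta_k \times \R^k} \varrho_k^2 = (2^k \Gamma(k/2+1))^{-1}$, and after accounting for the $1/k!$ normalization arising from symmetrizing $\varrho_k$ from $\Delta_k$ onto $[0,1]^k$ one obtains $\|\mathbf{\varrho}(\beta)\|_F^2 = \sum_k \beta^{2k}/(k!\,2^k \Gamma(k/2+1)) < \infty$ for every $\beta \in \R$, due to the rapid Gamma-function decay.

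Second, I would compare the chaos series term by term. The $k$-th component of $I(\mathbf{\varrho}(\beta))$ is $I_k(\beta^k \varrho_k) = \beta^k I_k(\varrho_k)$ by linearity. By the remark at the end of Section~\ref{wn_si_section}, when $\varrho_k$ is regarded as a function on $\Delta_k \times \R^k$ extended by zero, $I_k(\varrho_k)$ is exactly the multiple stochastic integral $k! \int_{\Delta_k \times \R^k} \varrho_k \, W^{\otimes k}(d\vt\,d\vx)$, the $k!$ being absorbed into the simplex-integration convention used in the defining series \eqref{Zn_wiener_chaos}. This matches the $k$-th summand in \eqref{Zn_wiener_chaos} on the nose. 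Since the $k=0$ terms are both equal to $1$, the entire series agree summand by summand.

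Finally, because $\|\mathbf{\varrho}(\beta)\|_F^2 < \infty$, the partial sums $\sum_{k=0}^N I_k(\beta^k \varrho_k)$ converge in $L^2(\Omega_W)$ by the Fock-space isometry, and the term-by-term identification from the previous step forces the limit to coincide with $\mathcal{Z}_\beta$. There is no genuine obstacle in the argument; the only mildly delicate point is keeping track of the normalizing factor of $k!$ between the symmetrized multiple integral $I_k$ and the ordered-simplex integral appearing in \eqref{Zn_wiener_chaos}, which is exactly why the author isolated the remark at the end of Section~\ref{wn_si_section}. The lemma's role is really to fix notation: it recasts the concrete series \eqref{Zn_wiener_chaos} as an abstract image under the isometry $I$, allowing later arguments to exploit Fock-space structure directly.
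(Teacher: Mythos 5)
Your proposal is correct and follows essentially the same route as the paper: the paper's entire proof is the Dirichlet/Beta-function computation of $\int_{\Delta_k}\int_{\R^k}\varrho_k^2 = (2^k\Gamma(k/2+1))^{-1}$ establishing $\|\mathbf{\varrho}(\beta)\|_F^2 < \infty$, with the term-by-term identification against \eqref{Zn_wiener_chaos} left implicit via the simplex-integration convention of Section \ref{wn_si_section}, which you simply make explicit. (Your extra factor of $1/k!$ in the Fock norm is a harmless convention discrepancy that does not affect finiteness or the conclusion.)
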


For fixed $y \in\R$ we also define
\[
\varrho_k(\vt, \vx;y) = \varrho_k(\vt, \vx) \varrho(1-
\vt_k, y - \vx_k)
\]
for which it can be computed in a similar manner as above that
\begin{eqnarray*}
&&\int_{\Delta_k} \int_{\R^k}
\varrho_k(\vt, \vx;y)^2 \,d \vx\, d \vt\\
&&\qquad = \frac
{e^{-y^2}}{2^{k + 1/2} \Gamma( {(k+1)}/{2} )}.
\end{eqnarray*}
We define $\bolds{\varrho}(\beta;y) = (1, \beta\varrho_1(\cdot,
\cdot
;y), \beta^2 \varrho_2(\cdot, \cdot; y), \ldots)$, and then
clearly\break
$\sup_y \Vert \varrho(\beta;  y)\Vert _F^2 < \infty$ uniformly in $y$, for all
$\beta\in\R$.

\section{$U$-Statistics}
\label{U_stat_section}

In this section we prove the main technical theorem for convergence in
law of the partition functions. The results presented here are standard
in the theory of $U$-statistics. We are mostly interested in sums of the form
%
\begin{equation}
\label{generic_ustat_sum} \sum_{\vi\in D_k^n} \mathop{\sum
_{\vx\in\Z^k }}_{ \vi
\leftrightarrow\vx} g(\vi, \vx) \omega(\vi, \vx)
\end{equation}
for some weight function $g$; this particular type of sum is generally
referred to as a weighted or asymmetric $U$-statistic.

\subsection{$U$-Statistics for space--time random environments}

It will actually be more efficient to slightly generalize our results
to sums over unordered $\vi$, that is, $\vi\in E_k^n$ where
\[
E_k^n = \bigl\{ \vi\in[n]^k \dvtx
\vi_j \neq\vi_l \textrm{ for } j \neq l \bigr\}.
\]
Furthermore the theory will be easier to work with when the weight
function is extracted from an $L^2$ function on $[0,1]^k \times\R^k$.
We first discretize such a function by replacing it by its average on
rectangles; we use rectangles of the form
\[
\mathcal{R}_k^n:= \biggl\{ \biggl( \frac{\vi-\mathbf{1}}{n},
\frac{\vi}{n} \biggr] \times\biggl( \frac{\vx-\mathbf{1}}{\sqrt{n}}, \frac{\vx+\mathbf
{1}}{\sqrt {n}} \biggr] \dvtx \vi\in
D_k^n, \vx\in\Z^k, \vi\leftrightarrow\vx
\biggr\},
\]
with $\mathbf{1}$ being the vector of all ones. Observe that $|R| = 2^k
n^{-{3k}/{2}}$ for $R \in\mathcal{R}_k^n$. For $g \in L^2([0,1]^k
\times\R^k)$ define $\overline{g}_n$ by
\[
\overline{g}_n(\vt, \vx) = \frac{1}{| R |} \int
_R g\qquad (\vt, \vx ) \in R \in\mathcal{R}_n.
\]
In probabilistic terms $\overline{g}_n$ is simply the conditional
expectation of $g$ onto the rectangles of $\mathcal{R}_k^n$. Now we
define weighted $U$-statistics via
%
\begin{equation}
\label{sk_def} \S_k^n(g) = 2^{{k}/{2}} \sum
_{\vi\in E_k^n} \sum_{\vx\in\Z^k}
\overline{g}_n \biggl( \frac{\vi}{n}, \frac{\vx}{\sqrt{n}} \biggr)
\mathbf {1} \{ \vi \leftrightarrow\vx \} \omega(\vi, \vx).
\end{equation}

\begin{remark*}
Observe that in the space direction the rectangles have length $2/\sqrt {n}$ rather than $1/\sqrt{n}$. This is one way of dealing with the
periodicity issue of simple random walk.
\end{remark*}

From this definition we have the following:

\begin{lemma}\label{sk_lemma}
The map $\S_k^n$ is linear in the sense that for all $\alpha_1,
\ldots,
\alpha_m \in\R$ and $g_1, \ldots, g_m \in L^2([0,1]^k \times\R
^k)$ we have
\[
\sum_{l=1}^m \alpha_l
\S_k^n(g_l) = \S_k^n
\Biggl( \sum_{l=1}^m \alpha
_l g_l \Biggr),
\]
with probability one. For all $k$ the variables $\S_k^n(g)$ are mean
zero, and for $k_1 \neq k_2$ and $g_i \in L^2([0,1]^{k_i} \times\R
^{k_i})$ we have
\[
Q \bigl[ \S_{k_1}^n(g_1) \S_{k_2}^n(g_2)
\bigr] = 0.
\]
For $k_1 = k_2 = k$ we have
\[
Q \bigl[ \S_k^n(g)^2 \bigr] \leq
n^{{3k}/{2}} \Vert \overline{g}_n \Vert _{L^2([0,1]^k \times\R^k)}^2
\leq n^{{3k}/{2}} \Vert g \Vert _{L^2([0,1]^k \times\R^k).}^2
\]
\end{lemma}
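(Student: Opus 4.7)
The plan is to verify the four claims in order; all four rest on the elementary combinatorics of pairing up the independent, mean-zero environment variables in products of the form $\omega(\vi,\vx)=\prod_j\omega(\vi_j,\vx_j)$. Linearity is immediate because $g\mapsto\overline g_n$ is a conditional expectation onto the $\sigma$-field generated by $\mathcal R_k^n$, hence linear, and the formula for $\S_k^n(g)$ is a linear combination of the values of $\overline g_n$. For the mean-zero claim, note that $\vi\in E_k^n$ has pairwise distinct time coordinates, so the $k$ space-time sites $(\vi_j,\vx_j)\in\Z_+\times\Z$ are automatically distinct, and by independence $Q[\omega(\vi,\vx)]=\prod_j Q[\omega(\vi_j,\vx_j)]=0$.

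For orthogonality with $k_1\neq k_2$, I would expand $Q[\S_{k_1}^n(g_1)\S_{k_2}^n(g_2)]$ as a sum of terms of the form $Q\bigl[\prod_{j=1}^{k_1}\omega(\vi_j,\vx_j)\prod_{l=1}^{k_2}\omega(\vi'_l,\vx'_l)\bigr]$ and invoke independence: such an expectation vanishes unless every space-time site appearing in the product does so at least twice. Since the $k_1$ sites within the first tuple are mutually distinct (and likewise within the second), the only way to satisfy the pairing condition is via a bijection between the two site-sets, which forces $k_1=k_2$. Running the same argument with $k_1=k_2=k$, the surviving pairings are exactly $(\vi',\vx')=(\pi\vi,\pi\vx)$ for some $\pi\in S_k$, and for those $Q[\omega(\vi,\vx)\omega(\pi\vi,\pi\vx)]=\prod_j Q[\omega(\vi_j,\vx_j)^2]=1$ by the unit-variance normalization. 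This yields the exact identity
\[ Q[\S_k^n(g)^2]=2^{k}\sum_{\pi\in S_k}\sum_{\substack{\vi\in E_k^n\\ \vx\in\Z^k,\,\vi\leftrightarrow\vx}}\overline g_n\!\left(\tfrac{\vi}{n},\tfrac{\vx}{\sqrt n}\right)\overline g_n\!\left(\tfrac{\pi\vi}{n},\tfrac{\pi\vx}{\sqrt n}\right). \]

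Each cross term is controlled by $2ab\le a^2+b^2$, after which permutation invariance of the sum over $E_k^n\times\{\vx:\vi\leftrightarrow\vx\}$ (apply the bijection $(\vi,\vx)\mapsto(\pi^{-1}\vi,\pi^{-1}\vx)$, which preserves the parity constraint) collapses both terms to a common sum of $\overline g_n^2$-values. Converting that discrete sum into an integral via $|R|=2^kn^{-3k/2}$ for $R\in\mathcal R_k^n$ produces $n^{3k/2}\|\overline g_n\|_{L^2}^2$, and the final inequality $\|\overline g_n\|_{L^2}^2\le\|g\|_{L^2}^2$ is Jensen applied to the averaging (conditional-expectation) operator. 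The step that actually demands care, rather than bookkeeping, is matching the combinatorial factors---the $2^{k/2}$ prefactor, the rectangle volume $2^kn^{-3k/2}$, the count $|S_k|=k!$, and the chosen convention for evaluating $\overline g_n$ on unordered tuples $\vi\in E_k^n$---so that they conspire to yield exactly the coefficient $n^{3k/2}$ in the stated bound; this is the only nontrivial obstacle.
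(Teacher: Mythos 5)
Your overall route is the same as the paper's: expand the second moment, use independence and mean zero to identify which products of $\omega$'s survive, convert the resulting discrete sum into $\|\overline g_n\|_{L^2}^2$ via the rectangle volume $|R|=2^kn^{-3k/2}$, and finish with Jensen for the conditional-expectation operator. The linearity, mean-zero, and $k_1\neq k_2$ parts are fine. The problem is precisely the point you defer as ``the only nontrivial obstacle'': the combinatorial factors do \emph{not} conspire to give $n^{3k/2}$. Your exact identity carries the sum over $\pi\in S_k$; after $2ab\le a^2+b^2$ and the relabelling bijection you obtain $k!$ identical copies of $2^k\sum_{\vi\in E_k^n,\vx}\overline g_n^2$, and since the rectangles indexed by all of $E_k^n$ (not just $D_k^n$) are already pairwise disjoint in $[0,1]^k\times\R^k$, the conversion to an integral yields $k!\,n^{3k/2}\|\overline g_n\|_{L^2}^2$. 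The factor $k!$ is not removable for general $g$: take $k=2$ and $g=\indicate{(t_1,x_1)\in A,\,(t_2,x_2)\in B}+\indicate{(t_1,x_1)\in B,\,(t_2,x_2)\in A}$ with $A,B$ disjoint unions of grid cells. Then $\S_2^n(g)=4S_AS_B$ with $S_A,S_B$ independent sums of the $\omega$'s over the two regions, so $Q[\S_2^n(g)^2]=4n^3|A||B|$ while $n^3\|g\|_{L^2}^2=2n^3|A||B|$; the stated inequality fails by exactly $k!=2$.

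The bound as stated is recovered only when $g$ is supported on $\Delta_k\times\R^k$ (extended by zero off the simplex), which is the only case the lemma is ever applied to: there $\overline g_n$ vanishes on every rectangle whose time-index is not increasing, so each cross term with $\pi\neq\mathrm{id}$ has a vanishing factor, your identity collapses to $2^k\sum_{\vi\in D_k^n,\vx}\overline g_n^2$, and the constant $n^{3k/2}$ comes out on the nose. (The paper's own proof obscures this by asserting that the covariance of two $\omega$-products is $\indicate{\vi=\vi',\,\vx=\vx'}$, which silently discards the permutation pairings you correctly kept.) Note also that the $k!$ cannot simply be tolerated downstream: in the proof of Proposition 5.3 this bound is summed against $\|\varrho_k\|_{L^2}^2\sim 2^{-k}/\Gamma(k/2+1)$, and an extra $k!$ destroys the summability of the tail. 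So you should either prove the second-moment estimate under the hypothesis that $g$ vanishes off the simplex (where your argument closes with the claimed constant), or carry the $k!$ explicitly and check it is killed elsewhere — as written, the final inequality does not follow from your displayed identity.
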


\begin{pf}
The linearity and mean zero properties are obvious. For the covariance
relation for $k_1 \neq k_2$, simply observe that if $\vi\in E_{k_1}^n,
\vx\in\Z^{k_1}, \vi' \in E_{k_2}^n, \vx' \in\Z^{k_1}$, then
\[
Q \Biggl[ \prod_{j_1=1}^{k_1} \omega(
\vi_{j_1}, \vx_{j_1}) \prod_{j_2 =
1}^{k_2}
\omega \bigl(\vi_{j_2}', \vx_{j_2}'
\bigr) \Biggr] = 0
\]
since there is necessarily one $\omega$ term that is distinct from all
others, and its independence from the rest implies zero expectation. In
the $k_1 = k_2 = k$ case observe that
\[
Q \Biggl[ \prod_{j=1}^{k} \omega(
\vi_{j}, \vx_{j}) \prod_{j = 1}^{k}
\omega \bigl(\vi_{j}', \vx_{j}'
\bigr) \Biggr] = \mathbf{1} \bigl\{ \vi= \vi', \vx=
\vx' \bigr\}.
\]
Hence
\begin{eqnarray*}
Q \bigl[ \S_k^n(g)^2 \bigr] &=&
2^k \sum_{\vi\in E_k^n} \sum
_{\vx\in\Z^k} \overline{g}_n \biggl( \frac{\vi}{n},
\frac{\vx}{\sqrt{n}} \biggr)^2
\\
& \leq&2^k \sum_{\vi\in[n]^k} \sum
_{\vx\in\Z^k} \frac{\mathbf
{1} \{ \vi \leftrightarrow\vx \}}{|R|} \int_R g(
\vt, \vy)^2 \,d \vt\, d \vy
\\
&=& n^{{3k}/{2}} \int_{[0,1]^k} \int_{\R^k}
g(\vt, \vy)^2 \,d \vt\, d \vy.
\end{eqnarray*}
The inequality is an application of the Cauchy--Schwarz lemma, or
simply from the fact that $\overline{g}_n$ is a conditional expectation
of $g$.
\end{pf}

We next state a standard weak convergence result that we will make
repeated use of. A proof can be found in \cite{billingsley}, Chapter~1, Theorem~4.2.

\begin{lemma}\label{wk_lemma}
Let $Y_k^n, Y_k, Y^n, Y$ be real-valued random variables, and assume
that for each fixed $n$ the $Y_k^n$ and $Y^n$ are defined on a common
probability space. Assume that the following diagram holds:
\[
\xymatrixcolsep{5pc}\xymatrix{
Y_k^n \ar[d]_{\mathrm{in\ probability,\ uniformly\ in\ } n }^{k \to\infty} \ar[r]^{(d)}_{n \to\infty}& Y_k \ar[d]_{(d)}^{k\to\infty}\\
Y^n & Y
}
\]

Then $Y^n \mathop{\longrightarrow}\limits^{(d)} Y$.
\end{lemma}

The following is one of our main technical theorems. The proof borrows
heavily from \cite{Janson:GHS}, Theorem~11.16.

\begin{theorem}\label{sk_theorem}
Let $g \in L^2([0,1]^k \times\R^k)$. Then, as $n \to\infty$,
\[
n^{-{3k}/{4}} \S_k^n(g) \mathop{\longrightarrow}^{(d)} \int
_{[0,1]^k} \int_{\R
^k} g(\vt, \vx)
W^{\otimes k}(d \vt\, d \vx).
\]
Moreover for any finite collection of $k_1, \ldots, k_m \in\N$ and
$g_1, \ldots, g_m$ with $g_i \in L^2([0,1]^{k_i} \times\R^{k_i})$, one
has the joint convergence
\[
\bigl(n^{-{3k_1}/{4}} \S_{k_1}^n(g_1),
\ldots, n^{-{3k_m}/{4}}\S _{k_m}^n(g_m) \bigr)
\mathop{\longrightarrow}^{(d)} \bigl( I_{k_1}(g_1), \ldots,
I_{k_m}(g_m) \bigr).
\]
\end{theorem}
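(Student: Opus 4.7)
The plan is to follow the classical strategy for proving convergence of weighted $U$-statistics to multiple Wiener-It\^o integrals (compare \cite[Thm.~11.16]{Janson:GHS}): reduce to a dense family of tensor-product integrands, handle them by combining the ordinary CLT with a diagonal-removal estimate, and then extend to general $g \in L^2_S$ via the isometric $L^2$-bound of Lemma \ref{sk_lemma} together with Lemma \ref{wk_lemma}. For the joint statement across several orders, I would use the Cram\'er-Wold device combined with the cross-order orthogonality already recorded in Lemma \ref{sk_lemma}.

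Concretely, let $\mathcal{D}_k$ denote the linear span of symmetrizations $g = \sym\bigl(h_1\otimes\cdots\otimes h_k\bigr)$ where the $h_j \in L^2([0,1]\times\R)$ are simple step functions with pairwise disjoint rectangular supports; this family is dense in $L^2_S([0,1]^k\times\R^k)$. Since $\omega(\vi,\vx)$ and $E_k^n$ are permutation-symmetric, $\S_k^n(g) = \S_k^n(h_1\otimes\cdots\otimes h_k)$. I would then write
\begin{align*}
\prod_{j=1}^k \S_1^n(h_j) \;=\; 2^{k/2}\!\!\sum_{\vi\in[n]^k}\sum_{\vx\in\Z^k,\ \vi\leftrightarrow\vx}\prod_{j=1}^k \overline{h_j}_n\bigl(\vi_j/n,\vx_j/\sqrt{n}\bigr)\,\omega(\vi_j,\vx_j)
\end{align*}
and compare it to $\S_k^n(h_1\otimes\cdots\otimes h_k)$, which is the same sum restricted to $\vi \in E_k^n$. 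The difference consists of diagonal terms with $\vi_a = \vi_b$ for some $a\neq b$; because the supports are disjoint this forces $\vx_a\neq\vx_b$, so the $\omega$-variables remain independent and a direct second-moment count bounds the $L^2(\envP)$-norm squared of the diagonal contribution by $O(n^{3k/2-1})$. After dividing by $n^{3k/4}$ the diagonal vanishes, and
\begin{align*}
n^{-3k/4}\S_k^n(g) \;=\; \prod_{j=1}^k \bigl(n^{-3/4}\S_1^n(h_j)\bigr) \;+\; o_{L^2(\envP)}(1).
\end{align*}

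For each fixed $h \in L^2([0,1]\times\R)$, the scalar $\S_1^n(h)$ is a weighted sum of independent mean-zero unit-variance variables $\omega(i,x)$, so Lindeberg's CLT together with Lemma \ref{sk_lemma} yields $n^{-3/4}\S_1^n(h) \xrightarrow{(d)} N(0,\|h\|_{L^2}^2) = I_1(h)$; the factor $2^{1/2}$ in \eqref{sk_def} precisely compensates for the parity constraint that halves the available lattice points. Applying Lindeberg jointly to $h_1,\dots,h_k$ with disjoint supports gives joint convergence with independent Gaussian limits, and the continuous mapping theorem combined with the product formula \eqref{prod_of_disjoint_g} (which reads $I_k(\sym\bigotimes_j h_j) = \prod_j I_1(h_j)$) proves the theorem for $g \in \mathcal{D}_k$. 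To extend to general $g\in L^2_S$, choose $g_m \in \mathcal{D}_k$ with $g_m \to g$ in $L^2$; Lemma \ref{sk_lemma} gives $\sup_n \envP[(n^{-3k/4}\S_k^n(g-g_m))^2] \leq \|g-g_m\|_{L^2}^2$ and the It\^o isometry gives $\expect{(I_k(g-g_m))^2} \leq \|g-g_m\|_{L^2}^2$, so Lemma \ref{wk_lemma} delivers the conclusion. The joint statement across $k_1,\dots,k_m$ follows by applying Cram\'er-Wold and running the simple-function argument simultaneously for all orders, using the cross-order orthogonality in Lemma \ref{sk_lemma} to ensure that different orders decouple in both the pre-limit and the limit.

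The principal technical obstacle is the diagonal-removal step: one must carefully match the combinatorial factor $2^{k/2}$, the density of parity-admissible lattice points, and the $k!$-factor implicit in symmetrization so that the scaled $\S_k^n$ produces $I_k$ with exactly the right constant, and one must verify that coincidences of coordinates of $\vi$ contribute only the claimed $O(n^{-1})$ error after scaling, rather than generating a surviving Hermite-type correction. These same bookkeeping factors, applied at $k=1$, are responsible for the factor of two in the local limit theorem \eqref{llt} and the eventual replacement of $\beta$ by $\sqrt{2}\beta$ in Theorems \ref{Zn_convergence_theorem}, \ref{p2p_zn_convergence_theorem}, and \ref{theorem:4_param_convergence}.
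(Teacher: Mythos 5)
Your proposal is correct and follows essentially the same route as the paper's proof: reduce to tensor products of disjointly supported one-dimensional step functions, apply the CLT at $k=1$, factor $\S_k^n$ into a product of $\S_1^n$'s for $k>1$, and pass to general $g$ via the $L^2$ bound of Lemma \ref{sk_lemma}, Lemma \ref{wk_lemma}, and the Cram\'er--Wold device. The one place you are more careful than the paper is the diagonal step: the paper's Step 6 asserts an exact identity $\S_k^n(h_1\otimes\cdots\otimes h_k)=\prod_{j}\S_1^n(h_j)$, which is literally exact only when the time-projections of the supports are disjoint, whereas your $O(n^{3k/2-1})$ second-moment bound on the coincidence terms $\vi_a=\vi_b$ handles the general disjoint-support case and confirms that no Hermite-type correction survives the $n^{-3k/4}$ scaling.
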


\begin{pf}
The proof proceeds in several steps, beginning with simple functions~$g$ for the $k=1$ case and then bootstrapping to the general case via a
density argument.

\textit{Step} 1. Let $k=1$ and assume that $g(t,x) = \mathbf{1} \{
t_0 < t \leq t_1, x_0 < x \leq x_1 \}$. Then
\[
\S_1^n(g) = 2^{{1}/2} \sum
_{nt_0 < i \leq nt_1} \sum_{\sqrt{n}
x_0 <
x \leq\sqrt{n} x_1} \mathbf{1} \{ i
\leftrightarrow x \} \omega(i,x)
\]
is the sum of order $n^{3/2}(t_1 - t_0)(x_1 - x_0) + O(n)$ mean zero,
variance one random variables [the $2^{{1}/2}$ in front counters the
cancelation of half the $\omega(i,x)$ terms through the $i
\leftrightarrow x$ condition], and hence by the central limit theorem
\[
n^{-{3}/4} \S_1^n(g) \mathop{\longrightarrow}^{(d)} N \bigl(0,
(t_1 - t_0) (x_1 - x_0)
\bigr).
\]
Observing that $\Vert g\Vert _2^2 = (t_1 - t_0)(x_1 - x_0)$, it is easy to see
that\break  $\int_0^1 \int g(t,x) W(dt \,dx)$ has the same distribution. This
completes the theorem in this particular case.

\textit{Step} 2. Still in the $k=1$ case, suppose that $g_1, \ldots,
g_m$ are indicator functions for disjoint, finite area rectangles in
$[0,1] \times\R$. Then the variables $\S_1^n(g_l), l=1,\ldots,m$ are
all independent since they are functions of distinct random variables
$\omega(i,x)$. Moreover their individual limits $I_1(g_l)$ are also
independent, since the distinct $g_l$ are orthogonal in $L^2([0,1]
\times\R)$. Hence from step 1 and standard theory
(see \cite{billingsley}, Chapter~1, Section~4) we have the joint convergence
\[
n^{-{3}/4} \bigl( \S_1^n(g_1), \ldots,
\S_1^n(g_m) \bigr) \mathop{\longrightarrow}^{(d)} \bigl(
I_1(g_1), \ldots, I_1(g_m)
\bigr).
\]

\textit{Step} 3. Recall that a vector of variables converges in law if
and only if all linear combinations of its entries do (also known as
the Cram\'{e}r--Wold device), so that step 2 implies that for all
$\alpha_1, \ldots, \alpha_m \in\R$,
\[
n^{-{3}/4} \sum_{l=1}^{m}
\alpha_l \S_1^n(g_l)
\mathop{\longrightarrow}^{(d)} \sum_{l=1}^m
\alpha_l \int_0^1
g_l(t,x) W(dt \,dx).
\]
However both $\S_1^n$ and the stochastic integral are linear
operators, so
\[
n^{-{3}/4} \S_1^n \Biggl( \sum
_{l=1}^m \alpha_l g_l
\Biggr) \mathop{\longrightarrow}^{(d)} \int_0^1 \sum
_{l=1}^m \alpha_l
g_l(t, x) W(dt\, dx).
\]
Hence the theorem holds for all simple functions.

\textit{Step} 4. In fact joint convergence holds for all finite
collections of simple functions. Let $G_1, \ldots, G_M$ be such a
collection, and observe that for any scalars $\beta_1, \ldots, \beta_M
\in\R$ the sum
\[
\sum_{l=1}^M \beta_l
G_l
\]
is also a simple function, and hence step 3 applies. By linearity this
implies that
\[
\sum_{l=1}^{M} \beta_l
\S_1^n(g_l) \mathop{\longrightarrow}^{(d)} \sum
_{l=1}^M \beta _l \int
_0^1 G_l(t,x) W(dt dx),
\]
and this gives, by the Cram\'{e}r--Wold device, that
\[
n^{-{3}/4} \bigl(\S_1^n(G_1), \ldots,
\S_1^n(G_M) \bigr) \mathop{\longrightarrow}^{(d)}
\bigl(I_1(G_1), \ldots, I_1(G_M)
\bigr).
\]

\textit{Step} 5. We now complete the proof for $k=1$ via a density
argument. For each $g \in L^2([0,1] \times\R)$ there exists a sequence
of simple functions $\{ g_N \}_{N \geq1}$ such that $g_N \to g$ in
$L^2$, as $N \to\infty$. Step 3 gives us that
\[
n^{-{3}/4} \S_1^n(g_N)
\mathop{\longrightarrow}^{(d)} \int_0^1 \int
g_N(t,x) W(dt \,dx)
\]
as $n \to\infty$. By $g_N \to g$ in $L^2$ we have
\[
\int_0^1 \int g_N(t,x) W(dt
\,dx) \mathop{\longrightarrow}^{L^2} \int_0^1
\int g(t,x) W(dt \,dx).
\]
Similarly, by Lemma~\ref{sk_lemma} we have that
\[
Q \bigl[ \bigl( n^{-{3}/4} \bigl(\S_1^n(g_N)
- \S_1^n(g) \bigr) \bigr)^2 \bigr] \leq\Vert
g_N - g\Vert _{L^2}^2.
\]
This implies that, as $N \to\infty$, the left-hand side converges to
zero in $L^2$, uniformly in $n$. These three facts combine with Lemma~\ref{wk_lemma} to give us the following commutative diagram:
\[
\xymatrixcolsep{5pc}\xymatrix{
n^{-3/4} \S_1^n(g_N) \ar[r]^{(d)}_{n \to\infty} \ar[d]_{\mathrm{in}\ L^2,\ \mathrm{uniformly\ in\ } n}^{N \to\infty}& I_1(g_N) \ar[d]_{L^2}^{N \to\infty}\\
n^{-3/4} \S_1^n(g) \ar[r]^{(d)}_{n \to\infty}& I_1(g)}
\]

The joint convergence of any finite collection of $n^{-{3}/4} \S
_1^n(g)$ for $g \in L^2$ is now a consequence of the above, along with
linearity of $\S_1^n$ and the Cram\'{e}r--Wold device, as in step 4.

\textit{Step} 6. Finally, we extend the theorem to $k > 1$. First
consider functions of the form
%
\begin{equation}
\label{product_g} g(\vt, \vx) = g_{1}(\vt_1,
\vx_1) \cdots g_{k}(\vt_k, \vx_k),
\end{equation}
with $g_j \in L^2([0,1] \times\R)$ and distinct $g_j$ having distinct
support. For such functions~$g$,
\begin{eqnarray*}
n^{-{k}/{2}} \S_k^n(g) &=& 2^{{k}/{2}}
n^{-{k}/{2}} \sum_{\vi\in E_k^n} \sum
_{\vx\in\Z^k} \prod_{j=1}^k
g_{j} \biggl( \frac
{\vi
_j}{n}, \frac{\vx_j}{\sqrt{n}} \biggr)
\mathbf{1} \{ \vi\leftrightarrow \vx \} \omega(\vi_j,
\vx_j)
\\
&=& \prod_{j=1}^k 2^{{1}/2}
n^{-{1}/2} \sum_{i=1}^n \sum
_{x \in
\Z} g_{j} \biggl( \frac{i}{n},
\frac{x}{\sqrt{n}} \biggr) \mathbf{1} \{ i \leftrightarrow x \} \omega(i,x)
\\
&=& \prod_{j=1}^k n^{-{1}/2}
\S_1^n(g_{j})
\\
&\mathop{\longrightarrow}\limits^{(d)}& \prod_{j=1}^k \int
_0^1 g_{j}(t,x) W(dt \,dx)
\\
&=& \int_{[0,1]^k} \int_{\R^k} g(\vt, \vx) W(d
\vt\, d \vx).
\end{eqnarray*}
The second and last equalities use that the $g_j$ have distinct
supports and \eqref{prod_of_disjoint_g}. This proves the result for
this particular class of functions, and exactly as in step 4 the result
extends to the joint convergence for multiple $g$ of this form,
possibly with different $k$. It only remains to finish the proof for
general $g \in L^2([0,1]^k \times\R^k)$, which is accomplished via a
density argument as in step 5. Functions of the type~\eqref{product_g}
are dense in $L^2([0,1]^k \times\R^k)$, and hence step 5 goes through
with only trivial modifications.
\end{pf}

\begin{remark*}
It is worth noting that Theorem~\ref{sk_theorem} does \textit{not}
require $g$ to be symmetric even though the stochastic integrals
$I_k(g)$ are truly only defined for symmetric functions. This is
because the operators $\S_k^n$ have a natural symmetrizing property
already built in. If $\pi\in S_k$, then it is easy to see that
$\overline{(g \circ\pi)}_n(\vt, \vx) = \overline{g}_n(\pi\vt,
\pi\vx
)$, where $g \circ\pi(\vt, \vx) = g(\pi\vt, \pi\vx)$, and therefore
\begin{eqnarray*}
\S_k^n(g \circ\pi) &=& 2^{{k}/{2}} \sum
_{\vi\in E_k^n} \sum_{\vx
\in\Z^k}
\overline{g}_n \biggl( \frac{\pi\vi}{n}, \frac{\pi\vx
}{\sqrt{n}} \biggr)
\mathbf{1} \{ \vi\leftrightarrow\vx \} \omega(\vi, \vx)
\\
&=& 2^{{k}/{2}} \sum_{\vi\in E_k^n} \sum
_{\vx\in\Z^k} \overline {g}_n \biggl( \frac{\vi}{n},
\frac{\vx}{\sqrt{n}} \biggr) \mathbf{1} \bigl\{ \pi ^{-1} \vi
\leftrightarrow\pi^{-1} \vx \bigr\} \omega \bigl(\pi^{-1} \vi,
\pi^{-1} \vx \bigr) = \S_k^n(g),
\end{eqnarray*}
the last equality using that $\mathbf{1} \{ \vi\leftrightarrow\vx \}
\omega
(\vi, \vx)$ is symmetric in its arguments. Hence $\S_k^n(g) = \S
_k^n(\sym g)$.
\end{remark*}

\begin{remark*}
If the weight function $g$ is only defined on $\Delta_k \times\R^k$,
then we extend it to $[0,1]^k \times\R^k$ by setting it to be zero for
$\vt\notin\Delta_k$, as in the remark at the end of Section~\ref{wn_si_section}. As $\sym g$ is then just a copy of $g$ on each of the
$k!$ permutations of $[0,1]^k$, we see that the summation over $E_k^n$
may be replaced with a summation over $D_k^n$, with an extra $k!$ for
the proper accounting,
\[
{\S_k^n}^*(g):= 2^{{k}/{2}} \sum
_{\vi\in D_k^n} \sum_{\vx\in
\Z
_k^n}
\overline{g}_n \biggl( \frac{\vi}{n}, \frac{\vx}{\sqrt{n}} \biggr)
\mathbf{1} \{ \vi\leftrightarrow\vx \} \omega(\vi, \vx) = k! \S_k^n(g).
\]
Again by the remark of Section~\ref{wn_si_section} this means that
\[
n^{-{3k}/{4}} {\S_k^n}^*(g) \mathop{\longrightarrow}^{(d)} \int
_{\Delta_k} \int_{\R^k} g(\vt, \vx) W(d \vt\, d
\vx).
\]
In the future we will drop the $*$ notation on $\S_k^n$ as it is
usually understood from the context what the domain of $g$ is.
\end{remark*}

Finally we state a theorem which summarizes exactly how discrete Wiener
chaos expansions converge to continuum ones in the $L^2$ sense.

\begin{lemma}\label{discrete_to_cont}
If $g = (g_0, g_1, g_2, \ldots) \in\bigoplus_{k \geq0} L^2([0,1]^k
\times\R^k)$, then as \mbox{$n \to\infty$}
\[
I^n(g):= \sum_{k=0}^{\infty}
n^{-{3k}/{4}} \S_k^n(g_k) \mathop{\longrightarrow}^{(d)} \sum_{k=0}^{\infty} \int
_{[0,1]^k} \int_{\R^k} g_k(\vt,
\vx) W^{\otimes k}( d\vt\, d \vx) = I(g).
\]
Moreover if $G_1, \ldots, G_m \in\bigoplus_{k \geq0} L^2([0,1]^k
\times\R^k)$, then as $n \to\infty$ we have the joint convergence
\[
\bigl( I^n(G_1), \ldots ,I^n(G_m)
\bigr) \mathop{\longrightarrow}^{(d)} \bigl(I(G_1), \ldots, I(G_m)
\bigr).
\]
\end{lemma}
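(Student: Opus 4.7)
The strategy is a standard truncation argument that combines the joint convergence statement of Theorem \ref{sk_theorem} with a uniform tail bound coming from the $L^2$ estimate in Lemma \ref{sk_lemma}. For each $K \geq 0$ set
\begin{align*}
I^n_K(g) = \sum_{k=0}^K n^{-\frac{3k}{4}} \S_k^n(g_k), \qquad I_K(g) = \sum_{k=0}^K I_k(g_k).
\end{align*}
Since the truncated sums involve only finitely many terms, Theorem \ref{sk_theorem} applied to $(g_0, g_1, \ldots, g_K)$ gives the joint convergence of the components, and continuity of the summation map together with the continuous mapping theorem yields $I^n_K(g) \xrightarrow{(d)} I_K(g)$ as $n \to \infty$ for every fixed $K$.

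Next I would control the tails on both sides uniformly. On the continuum side, the orthogonality of distinct Wiener chaos terms gives $Q[(I(g) - I_K(g))^2] = \sum_{k > K} \|g_k\|_{L^2}^2$, which tends to zero because $g$ lies in the Fock space. On the discrete side, the same orthogonality statement from Lemma \ref{sk_lemma} applies to the operators $\S_k^n$: terms of different order $k$ are uncorrelated under $Q$. Combined with the variance estimate $Q[\S_k^n(g_k)^2] \leq n^{3k/2} \|g_k\|_{L^2}^2$, this produces
\begin{align*}
Q\!\left[(I^n(g) - I^n_K(g))^2\right] = \sum_{k > K} n^{-\frac{3k}{2}} Q[\S_k^n(g_k)^2] \leq \sum_{k > K} \|g_k\|_{L^2}^2,
\end{align*}
and the crucial point is that the normalization $n^{-3k/4}$ cancels precisely the $n^{3k/4}$ that appears (squared) in the variance bound, so the right-hand side is independent of $n$ and goes to zero as $K \to \infty$. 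Hence $I^n(g) - I^n_K(g) \to 0$ in $L^2(Q)$, and therefore in probability, uniformly in $n$.

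With the horizontal convergence (in $n$) at each fixed $K$ from the first step and the vertical convergences (in $K$) from the second step, Lemma \ref{wk_lemma} applied to $Y^n_K = I^n_K(g)$, $Y_K = I_K(g)$, $Y^n = I^n(g)$, $Y = I(g)$ immediately yields $I^n(g) \xrightarrow{(d)} I(g)$. For the joint statement I would invoke the Cram\'er-Wold device: by the linearity of each $\S_k^n$ from Lemma \ref{sk_lemma} and the corresponding linearity of stochastic integrals, any finite linear combination satisfies $\sum_{j} \beta_j I^n(G_j) = I^n\!\left(\sum_j \beta_j G_j\right)$ and likewise in the limit, so the single-$g$ convergence already established applies to every such combination and delivers the joint convergence.

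The main obstacle is really only the uniform-in-$n$ tail estimate, and as described above it works out cleanly because the $L^2$ bound in Lemma \ref{sk_lemma} scales with exactly the power of $n$ that is undone by the normalization $n^{-3k/4}$; everything else is routine truncation together with the diagram lemma and the Cram\'er-Wold reduction.
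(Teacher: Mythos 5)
Your proposal is correct and follows essentially the same route as the paper: truncate the chaos series, apply Theorem \ref{sk_theorem} to the finite truncation, use the orthogonality and variance bound of Lemma \ref{sk_lemma} to get the uniform-in-$n$ $L^2$ tail estimate (the $n^{-3k/2}$ normalization exactly cancelling the $n^{3k/2}$ in the variance bound, as you note), and conclude via Lemma \ref{wk_lemma} and the Cram\'er--Wold device for the joint statement. No gaps.
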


\begin{pf}
Since $I$ is an isometry from the Fock space onto $L^2(\Omega_W, \F_W,
\envW)$, we automatically get that $\sum_{k = 0}^M I_k(g_k)
\rightarrow
\sum_{k=0}^{\infty} I_k(g_k)$ in $L^2(\Omega_W, \F_W, \envW)$, as $M
\to\infty$. Since $\Var( n^{-{3k}/{4}} \S_k^n(g_k) ) \leq\Var(
I_k(g_k) )$, this also implies that
\[
\sum_{k=0}^M n^{-{3k}/{4}}
\S_k^n(g_k) \mathop{\longrightarrow}^{M \to \infty} \sum
_{k=0}^{\infty} n^{-{3k}/{4}}
\S_k^n(g_k)
\]
in $L^2(\Omega, Q)$, uniformly in $n$. Theorem~\ref{sk_theorem}
implies that
\[
\sum_{k=0}^M n^{-{3k}/{4}}
\S_k^n(g_k) \mathop{\longrightarrow}^{(d)} \sum
_{k=0}^M I_k(g_k)
\]
as $n \to\infty$. Putting these pieces together and using Lemma~\ref
{wk_lemma} gives us the diagram
\[
\xymatrixcolsep{5pc}\xymatrix{
\sum_{k=0}^M n^{-3k/4}\S_k^n(g_k) \ar[d]_{\mathrm{uniformly\ in\ } L^2}^{M \to\infty }\ar[r]^{(d)}_{n \to\infty}& \sum_{k=0}^M I_k(g_k)\ar[d]_{L^2}^{M \to\infty}\\
\sum_{k=0}^{\infty} n^{-3k/4} \S_k^n(g_k) \ar[r]^{(d)}_{n \to\infty}&
\sum_{k=0}^{\infty} I_k(g_k).
}
\]
The joint convergence follows by another application of the Cram\'
{e}r--Wold device.
\end{pf}

\subsection{Perturbations of the environment field}

Our strategy for proving Theorem~\ref{Zn_convergence_theorem} will be
to first prove the convergence result for $\symz_n^{\omega}$, and
then extend it
to~$Z_n^{\omega}$. The main idea is that, after a proper deterministic
normalization, $Z_n^{\omega}$ can be written in the same form as
$\symz_n^{\omega}
$, but with a mean and variance that may only be \textit
{asymptotically} zero and one, respectively. In this section we give
sufficient conditions on the mean and variance so that Theorem~\ref
{sk_theorem} still holds.

Throughout we let $\tilde{\omega}_n(i,x)$ denote a field of i.i.d.
random variables on $\N\times\Z$. The dependence on $n$ is to
indicate that the distribution of the environment variables may vary
with $n$. For $g \in L^2([0,1]^k \times\R^k)$ we let $\S_k^n(g;
\tilde
{\omega}_n)$ denote the same quantity as $\S_k^n(g)$, but with the
$\omega$ variables replaced by $\tilde{\omega}_n$. We have the
following generalization of Theorem~\ref{sk_theorem}:

\begin{theorem}\label{wn_ustat_extension}
Assume that the environment variables $\tilde{\omega}_n$ satisfy\break
$Q(\tilde{\omega}_n) = 0$ and $Q(\tilde{\omega}_n^2) = 1 + o(1)$. Then
the statement of Theorem~\ref{sk_theorem} holds with all instances of
$\S_k^n(g)$ replaced by $\S_k^n(g; \tilde{\omega}_n)$.
\end{theorem}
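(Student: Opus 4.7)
The plan is to repeat the six-step proof of Theorem \ref{sk_theorem} essentially verbatim, replacing the single invocation of the classical central limit theorem in Step 1 by the Lindeberg--Feller CLT for triangular arrays. For $g = \indicate{R}$ with $R = (t_0,t_1]\times(x_0,x_1]$ and $k=1$, the normalized quantity $n^{-3/4}\S_1^n(g;\tilde\omega_n)$ is a sum of order $n^{3/2}$ i.i.d.\ terms of the form $\sqrt{2}\,n^{-3/4}\tilde\omega_n(i,x)$, whose total variance converges to $(t_1-t_0)(x_1-x_0) = \|g\|_{L^2}^2$ by the first two hypotheses on $\tilde\omega_n$. The third hypothesis implies the Lindeberg condition at the relevant scale $n^{3/4}$, so Lindeberg--Feller gives the $N(0,\|g\|_{L^2}^2)$ limit that exactly matches $I_1(g)$. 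Verifying this CLT is the only genuinely new step and will be the main technical point.

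Steps 2--4 in the original proof (joint convergence for disjoint rectangles, then for linear combinations, then for finite collections of simple functions) use only the linearity of $\S_1^n(\cdot;\tilde\omega_n)$, the independence of the $\tilde\omega_n(i,x)$ across distinct sites, and the Cram\'er--Wold device. None of these features are affected by the $n$-dependence of the marginal law of $\tilde\omega_n$, so these steps transfer verbatim.

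For Step 5 (the density argument extending simple functions to general $g\in L^2$), the essential ingredient is the second-moment bound of Lemma \ref{sk_lemma}. Repeating its proof with $\omega$ replaced by $\tilde\omega_n$ produces $Q[\S_k^n(g;\tilde\omega_n)^2]\leq (1+o(1))\,n^{3k/2}\|g\|_{L^2}^2$, and the $(1+o(1))$ factor is uniformly bounded in $n$; hence the $L^2$-uniform-in-$n$ approximation of $g$ by simple functions and the commutative diagram of Lemma \ref{wk_lemma} go through unchanged. Finally, Step 6 (the extension to $k>1$ via product functions with pairwise disjoint supports) relies only on the fact that such a product factors into $k$ independent copies of $\S_1^n(\cdot;\tilde\omega_n)$ evaluated on disjoint site collections; so the combination of the Step 4 joint convergence and the $k=1$ limit, together with one further $L^2$ density argument, yields the full statement.
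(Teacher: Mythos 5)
Your proposal is correct and follows essentially the same route as the paper: the paper's proof likewise observes that only Step 1 (replaced by the triangular-array/Lindeberg--Feller CLT, whose hypotheses are exactly the three bulleted conditions) and Step 5 (the second-moment bound of Lemma \ref{sk_lemma} acquiring a harmless $(1+o(1))^k$ factor) need re-examination, with the remaining steps unchanged.
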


\begin{pf}
Re-examining the proof of Theorem~\ref{sk_theorem}, we see that it is
enough to check that steps $1$ and $5$ are still valid; the other steps
are unchanged. For step $1$ we again assume that $g(t,x) = \mathbf{1}
\{ t_0 < t \leq t_1, x_0 < x \leq x_1 \}$, and we want to show that
$n^{-3/4} \S_1^n(g; \tilde{\omega}_n)$ converges in law to a Gaussian
with mean zero and variance $\Vert g\Vert _2^2$. Since the distribution of the
random variables is allowed to change with $n$ we require extra
hypotheses found in the central limit theorems for triangular arrays.
By \cite{durrett:book}, Theorem~2.4.5, the given hypothesis on the mean
and variance are sufficient. The cited theorem also requires a
condition of the form $Q(\tilde{\omega}_n^2 \mathbf{1} \{ |\tilde
{\omega }_n| > \varepsilon\sqrt{n} \}) \to0$ for every $\varepsilon> 0$,
but since
the $\tilde{\omega}_n$ variables are i.i.d. (for each $n$) the
hypothesis on the variance covers this; see the remark after
\cite{durrett:book}, Theorem~2.4.5.

For step $5$ observe that we now have
\[
Q \Biggl[ \prod_{j=1}^k \tilde{
\omega}_n(\vi_j, \vx_j) \prod
_{j=1}^k \tilde {\omega}_n \bigl(
\vi'_j, \vx'_j \bigr)
\Biggr] = \bigl(1 + o(1) \bigr)^k \mathbf{1} \bigl\{ \vi= \vi
', \vx= \vx' \bigr\}.
\]
Correspondingly, we have as an analogue of Lemma~\ref{sk_lemma}
\[
Q \bigl( \S_k^n(g; \tilde{\omega}_n)^2
\bigr) \leq n^{{3k}/{2}} \bigl(1+o(1) \bigr)^k Q \bigl(\tilde{
\omega}_n^2 \bigr)^k \Vert g\Vert
^2_{L^2([0,1]^k \times\R^k)}.
\]
Now for $g \in L^2([0,1] \times\R)$ again let $g_N$ be a sequence of
simple functions such that $g_N \rightarrow g$ in $L^2$. All that needs
to be checked is that $n^{-{3}/4} \S_1^n(g_N; \tilde{\omega}_n)
\to
n^{-{3}/4} \S_1^n(g; \tilde{\omega}_n)$ in $L^2$, uniformly in $n$,
but this is obvious since by the last calculation,
\[
Q \bigl( \bigl( n^{-{3}/4} \S_1^n(g_N
- g; \tilde{\omega}_n) \bigr)^2 \bigr) \leq \bigl(1 +
o(1) \bigr) \Vert g_N - g\Vert ^2_{L^2}.
\]
\upqed\end{pf}

We now extend Lemma~\ref{discrete_to_cont} to the situation of the
previous lemma.

\begin{lemma}\label{discrete_to_cont_extended}
Let $\tilde{\omega}_n$ satisfy the hypothesis of the previous theorem,
and assume that $g = (g_0, g_1, \ldots, ) \in\bigoplus_{k \geq0}
L^2([0,1]^k \times\R^k)$ is such that
%
\begin{equation}
\label{tail_condition} \lim_{N \to\infty} \limsup_{n \to\infty}
\sum_{k=N}^{\infty} Q \bigl(\tilde {
\omega}_n^2 \bigr)^k \Vert g_k
\Vert ^2_{L^2} = 0,
\end{equation}
then
\[
\tilde{I}^n(g):= \sum_{k=0}^{\infty}
n^{-3k/4} \S_k^n(g_k; \tilde {
\omega}_n) \mathop{\longrightarrow}^{(d)} I(g)
\]
as $n \to\infty$. Moreover if $G_1, \ldots, G_M \in\bigoplus
L^2([0,1]^k \times\R^k)$ individually satisfy~\eqref
{tail_condition}, then
\[
\bigl( \tilde{I}^n(G_1), \ldots, \tilde{I}^n(G_M)
\bigr) \mathop{\longrightarrow}^{(d)} \bigl(I(G_1), \ldots, I(G_M)
\bigr).
\]
\end{lemma}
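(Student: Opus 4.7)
The plan is to follow exactly the same commutative-diagram strategy as in Lemma \ref{discrete_to_cont}, using Theorem \ref{wn_ustat_extension} for the convergence at each fixed truncation level and the hypothesis \eqref{tail_condition} in place of the isometry bound $\Var(n^{-3k/4}\S_k^n(g_k)) \le \Var(I_k(g_k))$ that was available for the unperturbed environment. Define the truncations
\begin{align*}
\tilde{I}^n_N(g) = \sum_{k=0}^{N} n^{-3k/4} \S_k^n(g_k; \tilde{\omega}_n), \qquad I_N(g) = \sum_{k=0}^{N} I_k(g_k).
\end{align*}
By Theorem \ref{wn_ustat_extension} the vector $(n^{-3k/4}\S_k^n(g_k;\tilde{\omega}_n))_{k=0}^N$ converges jointly in law to $(I_k(g_k))_{k=0}^N$ as $n\to\infty$, so by the continuous mapping theorem (summation) $\tilde{I}^n_N(g) \xrightarrow{(d)} I_N(g)$. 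On the other hand, the Fock-space isometry gives $I_N(g) \to I(g)$ in $L^2(\Omega_W,\F_W,\envW)$ as $N\to\infty$.

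The only new input is uniform $L^2$ control of the tail $\tilde{I}^n(g) - \tilde{I}^n_N(g)$ in $n$. Here I would reuse the orthogonality computation from the proof of Theorem \ref{wn_ustat_extension}: since the $\tilde{\omega}_n$ are i.i.d. with mean zero, products of distinct $\tilde{\omega}_n$ have zero expectation, so $Q[\S_{k_1}^n(g_{k_1};\tilde{\omega}_n)\S_{k_2}^n(g_{k_2};\tilde{\omega}_n)]=0$ for $k_1\ne k_2$, and the variance estimate from that proof gives
\begin{align*}
Q\left[\left(\tilde{I}^n(g)-\tilde{I}^n_N(g)\right)^2\right] = \sum_{k=N+1}^{\infty} n^{-3k/2} Q[\S_k^n(g_k;\tilde{\omega}_n)^2] \le \sum_{k=N+1}^{\infty} Q(\tilde{\omega}_n^2)^k \|g_k\|^2_{L^2}.
\end{align*}
Hypothesis \eqref{tail_condition} is precisely the statement that the right-hand side tends to $0$ as $N\to\infty$, uniformly in $n$ (after taking $\limsup$). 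This places us in the framework of Lemma \ref{wk_lemma} with $Y_k^n=\tilde{I}^n_N(g)$, $Y_k=I_N(g)$, $Y^n=\tilde{I}^n(g)$, $Y=I(g)$, giving the commutative diagram
\begin{align*}
\begin{CD}
\tilde{I}^n_N(g) @>(d)>{n \to \infty}> I_N(g) \\
@V{L^2,\,\text{unif. in }n}V{N \to \infty}V                                    @VL^2V{N \to \infty}V \\
\tilde{I}^n(g)  @>(d)>{n \to \infty}> I(g)
\end{CD}
\end{align*}
and therefore the claimed one-dimensional convergence $\tilde{I}^n(g)\xrightarrow{(d)} I(g)$.

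For the joint convergence of $(\tilde{I}^n(G_1),\ldots,\tilde{I}^n(G_M))$, I would invoke the Cramér–Wold device: any linear combination $\sum_{\ell=1}^M \alpha_\ell \tilde{I}^n(G_\ell)$ equals $\tilde{I}^n(\sum_\ell \alpha_\ell G_\ell)$ by the linearity of each $\S_k^n(\cdot;\tilde{\omega}_n)$ noted in Lemma \ref{sk_lemma}, and the sum $\sum_\ell \alpha_\ell G_\ell$ still satisfies \eqref{tail_condition} by the triangle inequality (up to an $M$-dependent constant), so the one-dimensional result applied to that combination yields the required joint weak limit $\sum_\ell \alpha_\ell I(G_\ell)$. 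The main thing to be careful about is the verification of the uniform-in-$n$ tail bound — it is what distinguishes this lemma from Lemma \ref{discrete_to_cont} and is precisely the role of \eqref{tail_condition}; once that is in hand the rest is a routine repetition of the earlier diagram argument.
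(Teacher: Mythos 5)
Your proposal is correct and follows essentially the same route as the paper: the paper's proof likewise reruns the commutative-diagram argument of Lemma \ref{discrete_to_cont}, noting that the only new ingredient is the uniform-in-$n$ $L^2$ tail bound supplied by \eqref{tail_condition}, and handles the joint convergence by Cram\'{e}r--Wold after checking that linear combinations of the $G_i$ still satisfy \eqref{tail_condition} via the triangle inequality. Your write-up is in fact somewhat more explicit than the paper's about the orthogonality computation behind the tail estimate, but the substance is identical.
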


\begin{pf}
The proof of Lemma~\ref{discrete_to_cont} goes through as before; one
only needs to check that
\[
\sum_{k=0}^{M} n^{-3k/4}
\S_k^n(g_k; \tilde{\omega}_n)
\rightarrow \sum_{k=0}^{\infty} n^{-3k/4}
\S_k^n(g_k; \tilde{\omega}_n)
\]
uniformly in $n$ as $M \to\infty$, and \eqref{tail_condition}
guarantees that this is true in the $L^2$ sense. The joint convergence
again follows from the Cram\'{e}r--Wold device: one only needs to check
that condition \eqref{tail_condition} is satisfied by any linear
combination of the $G_i$, and this is an easy consequence of the
triangle inequality.~%
\end{pf}

\begin{remark*}
The assumption that $Q(\tilde{\omega}_n) = 0$ is not really necessary;
in general $Q(\tilde{\omega}_n) = o(n^{-3/4})$ will suffice. However,
in this case the cross product terms take the form
\begin{eqnarray*}
&&Q \Biggl[ \prod_{j=1}^k \tilde{
\omega}_n(\vi_j, \vx_j) \prod
_{j=1}^k \tilde {\omega}_n \bigl(
\vi'_j, \vx'_j \bigr)
\Biggr] \\
&&\qquad= Q \bigl( \tilde{\omega}_n^2
\bigr)^{\# \{ j \dvtx (\vi
_j, \vx_j) = (\vi'_j, \vx'_j) \}} Q(\tilde{\omega}_n)^{2 \# \{ j \dvtx (\vi
_j, \vx_j) \neq(\vi'_j, \vx'_j) \}},
\end{eqnarray*}
which is cumbersome to deal with.
\end{remark*}

\section{Convergence of the point-to-line partition functions}
\label{part_function_convergence_section}

In this section we use the results of Section~\ref{U_stat_section} to
prove Theorem~\ref{Zn_convergence_theorem} on convergence of the
partition functions. The strategy is to prove the convergence for the
modified partition functions $\symz_n^{\omega}$ and then transfer the
results to
the usual partition functions $Z_n^{\omega}$. Throughout we assume that
the environment variables have mean zero and variance one.

\begin{definition}
For $k, n \geq1$, define $p_k^n \dvtx [0,1]^k \times\R^k \to\R$ by
\[
p_k^n(\vt, \vx) = \overline{p}_k \bigl(
\lceil n \vt \rceil, \vx\sqrt{n} \bigr) \mathbf{1} \bigl\{ \lceil n \vt \rceil \in
D_k^n \bigr\}.
\]
\end{definition}

\begin{remark*}
Observe that for $k > n$, it is impossible for a vector in $[0,1]^k$ to
have all $k$ elements separated by at least $1/n$. Hence the condition
$\lceil n \vt \rceil \in D_k^n$ implies that $p_k^n$ is identically zero.
\end{remark*}

\begin{remark*}
Temporarily ignoring the indicator function term in this last
definition, $p_k^n$ is the $k$-fold density function of the continuous
time process
\[
t \mapsto\frac{\S_{\lceil nt \rceil} + U_{\lceil nt \rceil}}{\sqrt{n}},
\]
where the $U_i$ are an i.i.d. collection of Uniform random variables on
$(-1,1)$. We will use this later to simplify definitions for the
point-to-point partition functions.\vspace*{-2pt}
\end{remark*}

\begin{remark*}
Also observe that $p_k^n$ is already constant on the rectangles of
$\mathcal{R}_k^n$, so that $\overline{p_k^n} = p_k^n$. Moreover, for
$\vi\in\E_k^n, \vx\in\Z^k$ such that $\vi\leftrightarrow\vx$,
\[
p_k^n \biggl( \frac{\vi}{n}, \frac{\vx}{\sqrt{n}}
\biggr) = \overline {p}_k(\vi, \vx ) \mathbf{1} \bigl\{ \vi\in
D_k^n \bigr\} = 2^{-k} p_k(\vi,
\vx) \mathbf{1} \bigl\{ \vi\in D_k^n \bigr\}.
\]
Thus, by definition \eqref{sk_def} of $\S_k^n$,
\[
\S_k^n \bigl(p_k^n \bigr) =
2^{-{k}/{2}} \sum_{\vi\in D_k^n} \sum
_{\vx\in
\Z
^k} p_k(\vi, \vx) \omega(\vi, \vx).
\]
The $\vi\leftrightarrow\vx$ condition is already handled by $p_k$.
This leads to the following:\vspace*{-2pt}
\end{remark*}

\begin{lemma}\label{p2l_rewrite}
The point-to-line partition function may be rewritten as
\[
\symz_n^{\omega}(\beta) = \sum_{k=0}^{n}
2^{{k}/{2}} \beta^k \S _k^n
\bigl(p_k^n \bigr) = \sum_{k=0}^n
2^{{k}/{2}} \beta^k n^{-{k}/{2}} \S_k^n
\bigl( n^{{k}/{2}}p_k^n \bigr).
\]
Consequently,
\[
\symz_n^{\omega} \bigl(\beta n^{-{1}/4} \bigr) = \sum
_{k=0}^n 2^{{k}/{2}}
\beta^k n^{-{3k}/{4}} \S_k^n \bigl(
n^{{k}/{2}} p_k^n \bigr).\vspace*{-2pt}
\]
\end{lemma}

From this we have the following:

\begin{proposition}\label{zbar_convergence_prop}
Assume that the environment variables $\omega$ have mean zero and
variance one. Then as $n \to\infty$ we have $\symz_n^{\omega}(\beta
n^{-{1}/{4}})
\mathop{\longrightarrow}\limits^{(d)} \mathcal{Z}_{\sqrt{2}
\beta}$.\vspace*{-2pt}
\end{proposition}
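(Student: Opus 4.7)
The plan is to apply the U-statistic machinery of Section \ref{U_stat_section} to the chaos-like decomposition of $\barz(\bn)$ provided by Lemma \ref{p2l_rewrite}, namely
\[
\barz(\bn) = \sum_{k=0}^n (\sqrt{2}\beta)^k \, n^{-3k/4} \S_k^n(n^{k/2} p_k^n),
\]
and then to compare the result against the Wiener chaos expansion \eqref{Zn_wiener_chaos} of $\mathcal{Z}_{\sqrt{2}\beta}$. The kernels $n^{k/2} p_k^n$ depend on $n$, so Lemma \ref{discrete_to_cont} does not apply verbatim; one must combine it with an approximation step that replaces these discrete kernels by their continuum Brownian analogues $\varrho_k$.

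The main technical step is the $L^2$ convergence
\[
\lim_{n \to \infty} \| n^{k/2} p_k^n - \varrho_k \|_{L^2([0,1]^k \times \R^k)} = 0
\]
for each fixed $k$. Unpacking the definition of $p_k^n$ and using the parity-corrected local central limit theorem \eqref{llt} on each factor of the product $p_k$, one gets pointwise convergence $n^{k/2} p_k^n(\vt, \vx) \to \varrho_k(\vt, \vx)$ off the time-diagonal. The explicit $L^2$ computation of Section \ref{brownian_chaos_section}, giving $\|\varrho_k\|^2_{L^2} = 1/(2^k \Gamma(k/2+1))$, produces an $n$-independent Gaussian dominator and lets one upgrade the pointwise convergence to $L^2$ via dominated convergence. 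I expect this to be the main obstacle: $\varrho_k$ is square-integrable but singular along $t_j = t_{j-1}$, so the domination must be set up carefully using a quantitative LLT error bound (so the approximation is uniform on rectangles away from the diagonal, and the near-diagonal rectangles are controlled by their Gaussian envelope).

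Given the $L^2$ convergence of kernels, for each fixed $k$ I would split
\[
n^{-3k/4} \S_k^n(n^{k/2} p_k^n) = n^{-3k/4} \S_k^n(\varrho_k) + n^{-3k/4} \S_k^n(n^{k/2} p_k^n - \varrho_k).
\]
Theorem \ref{sk_theorem} shows that the first term converges in distribution to $\int_{\Delta_k} \int_{\R^k} \varrho_k(\vt,\vx)\, W^{\otimes k}(d\vt\, d\vx)$, while Lemma \ref{sk_lemma} bounds the second moment of the second term by $\|n^{k/2} p_k^n - \varrho_k\|^2_{L^2}$, which vanishes. Joint convergence in any finite collection of $k$'s follows from the joint statement in Theorem \ref{sk_theorem} together with the Cram\'er--Wold device.

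To upgrade from finite partial sums to the full series, I would apply Lemma \ref{wk_lemma} as in the proof of Lemma \ref{discrete_to_cont}. The needed uniform tail bound is
\[
\lim_{M \to \infty} \limsup_{n \to \infty} \, Q \! \left[ \left( \sum_{k=M}^n (\sqrt{2}\beta)^k \, n^{-3k/4}\S_k^n(n^{k/2} p_k^n) \right)^{\!2} \right] = 0.
\]
Expanding the square, the cross terms between different chaoses vanish by the orthogonality in Lemma \ref{sk_lemma}, and Lemma \ref{sk_lemma} bounds the diagonal terms by $\sum_{k \geq M}(2\beta^2)^k \|n^{k/2} p_k^n\|^2_{L^2}$. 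The $L^2$ convergence of the kernels gives $\|n^{k/2} p_k^n\|^2_{L^2} \leq C\, \|\varrho_k\|^2_{L^2}$ uniformly in $n$, so the tail is dominated by $\sum_{k \geq M} (2\beta^2)^k / (2^k \Gamma(k/2+1))$, which vanishes as $M \to \infty$ thanks to the factorial decay. The resulting diagram identifies the limit as $\sum_{k=0}^\infty (\sqrt{2}\beta)^k \int_{\Delta_k}\int_{\R^k} \varrho_k\, W^{\otimes k}$, which is precisely $\mathcal{Z}_{\sqrt{2}\beta}$ by \eqref{Zn_wiener_chaos}.
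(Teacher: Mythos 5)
Your proposal is correct and follows essentially the same route as the paper: expand via Lemma \ref{p2l_rewrite}, replace the discrete kernels $n^{k/2}p_k^n$ by $\varrho_k$ using the local limit theorem together with the uniform bound $\sup_n \|n^{k/2}p_k^n\|_{L^2}\le C^k\|\varrho_k\|_{L^2}$ of Lemma \ref{lemma:discrete_bound}, control everything in $L^2$ via Lemma \ref{sk_lemma}, and pass to the full series with Lemma \ref{wk_lemma}. The only cosmetic difference is that the paper compares the whole series $\sum_k 2^{k/2}\beta^k n^{-3k/4}\S_k^n(\varrho_k)$ (whose convergence is Lemma \ref{discrete_to_cont}) to $\barz(\bn)$ in one $L^2$ estimate, whereas you argue termwise and then assemble the tail bound yourself; note that the uniform tail estimate requires the $C^k$ (not a single constant $C$) from Lemma \ref{lemma:discrete_bound}, which the $\Gamma(k/2+1)$ decay still absorbs.
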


\begin{pf}
First observe that Lemma~\ref{discrete_to_cont} implies that for all
$\beta> 0$,
\[
\sum_{k=0}^{\infty} \beta^k
n^{-3k/4} \S_k^n(\varrho_k)
\mathop{\longrightarrow}^{(d)} \mathcal{Z}_{\beta}
\]
as $n \to\infty$. Now we show that the difference between this term
and $\symz_n^{\omega}(\beta n^{-{1}/{4}})$ goes to zero as $n
\to\infty$. Observe that
\begin{eqnarray*}
&&\sum_{k=0}^{\infty}  2^{k/2}
\beta^k n^{-3k/4} \S_k^n(
\varrho_k) - \symz_n^{\omega} \bigl(\beta
n^{-{1}/{4}} \bigr)
\\
&&\qquad= \sum_{k=0}^n 2^{k/2}
\beta^k n^{-3k/4} \S_k^n \bigl(
\varrho_k - n^{
{k}/{2}} p_k^n \bigr) +
\sum_{k=n+1} 2^{k/2} \beta^k
n^{-3k/4} \S _k^n(\varrho_k).
\end{eqnarray*}
By Lemma~\ref{sk_lemma} the second term is bounded in $L^2$ by
\[
\sum_{k=n+1}^{\infty} 2^k
\beta^{2k} \Vert \varrho_k\Vert ^2_{L^2([0,1]^k
\times\R^k)},
\]
which, by the estimates in Section~\ref{brownian_chaos_section}, goes
to zero as $n \to\infty$. For the first term, note that in $L^2$ it is
bounded above by
\[
\sum_{k=0}^n 2^{k}
\beta^{2k} \bigl\Vert \varrho_k - n^{{k}/{2}}
p_k^n\bigr\Vert _{L^2}^2.
\]
The local limit theorem implies that $n^{k/2} p_k^n \to\varrho_k$
pointwise as $n \to\infty$. In Lemma~\ref{lemma:discrete_bound} of the
\hyperref[app]{Appendix} we prove there is a constant $C > 0$ such that
\[
\sup_n \bigl\Vert n^{k/2} p_k^n
\bigr\Vert _{L^2} \leq C^k \Vert \varrho_k\Vert
_{L^2},
\]
hence by the triangle inequality and dominated convergence we have
$\Vert \varrho_k - n^{{k}/{2}} p_k^n\Vert _{L^2} \to0$ as $n \to\infty$.
Since the sequence $C^k \Vert \varrho_k\Vert _{L^2}$ is summable, the estimate
above and dominated convergence also imply that
\[
\lim_{n \to\infty} \sum_{k=0}^n
2^k \beta^{2k} \bigl\Vert \varrho_k -
n^{
{k}/{2}} p_k^n\bigr\Vert _{L^2}^2
= \sum_{k=0}^{\infty} 2^k
\beta^{2k} \lim_{n
\to\infty} \bigl\Vert \varrho_k -
n^{{k}/{2}} p_k^n\bigr\Vert _{L^2}^2
= 0.
\]
\upqed\end{pf}

Now we begin the process of extending the convergence to the partition
function~$Z_n^{\omega}$.

\begin{proposition}\label{prop:Z_p2l_convergence}
Suppose that there is a $\beta_0 > 0$ such that $\lambda(\beta):=
\log
Q ( e^{\beta\omega} ) < \infty$ for all $0 < \beta< \beta_0$. Then
\[
e^{-n \lambda(\beta n^{-1/4})} Z_n^{\omega} \bigl(\beta n^{-{1}/{4}}
\bigr) \mathop{\longrightarrow}^{(d)} \mathcal{Z}_{\sqrt{2} \beta}.
\]
\end{proposition}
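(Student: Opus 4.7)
The plan is to reduce the claim to the version for the modified partition function $\barz$ already proved in Proposition \ref{zbar_convergence_prop}, by viewing the exponential normalization as a perturbation of the environment field. Define
\begin{align*}
\tilde{\omega}_n(i,x) := \frac{e^{\beta_n \omega(i,x) - \lambda(\beta_n)} - 1}{\beta_n}, \qquad \beta_n = \bn.
\end{align*}
Then by construction
\begin{align*}
e^{-n\lambda(\beta_n)} Z_n^{\omega}(\beta_n) = \Pl\!\left[\prod_{i=1}^n e^{\beta_n \omega(i, S_i) - \lambda(\beta_n)}\right] = \Pl\!\left[\prod_{i=1}^n \bigl(1 + \beta_n \tilde{\omega}_n(i, S_i)\bigr)\right],
\end{align*}
which has exactly the same algebraic form as $\barz(\beta_n)$ in \eqref{Zbar_def}, only with $\tilde{\omega}_n$ in place of $\omega$. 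Expanding the product as in \eqref{Zbar_expansion2} and rewriting in terms of the U-statistic operators $\S_k^n(\cdot; \tilde{\omega}_n)$ then gives the representation
\begin{align*}
e^{-n\lambda(\beta_n)} Z_n^{\omega}(\beta_n) = \sum_{k=0}^n 2^{k/2} \beta^k n^{-3k/4} \S_k^n\!\left(n^{k/2} p_k^n; \tilde{\omega}_n\right).
\end{align*}

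The next step is to verify that the triangular array $\{\tilde{\omega}_n\}$ satisfies the three hypotheses of Theorem \ref{wn_ustat_extension}. The mean-zero condition is immediate: $Q(\tilde{\omega}_n) = \beta_n^{-1}(Q e^{\beta_n\omega - \lambda(\beta_n)} - 1) = 0$. For the variance, a short computation gives
\begin{align*}
Q(\tilde{\omega}_n^2) = \frac{e^{\lambda(2\beta_n) - 2\lambda(\beta_n)} - 1}{\beta_n^2},
\end{align*}
and since $\lambda(\beta) = \tfrac12 \beta^2 + O(\beta^3)$ near zero (using $Q\omega = 0$, $Q\omega^2 = 1$, and analyticity of $\lambda$ in a neighbourhood of zero), one finds $Q(\tilde{\omega}_n^2) = 1 + O(\beta_n) = 1 + o(1)$. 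The Lindeberg-type condition $Q(\tilde{\omega}_n^2 \mathbf{1}\{|\tilde{\omega}_n|>\epsilon\}) \to 0$ follows from the existence of exponential moments of $\omega$: on the event $\{|\tilde{\omega}_n| > \epsilon\}$, either $|\omega|$ is bounded by a constant (in which case $\tilde{\omega}_n \to \omega$ a.s.\ and dominated convergence handles it) or $|\omega|$ is large, in which case one controls the truncated second moment via $\lambda(2\beta_0) < \infty$.

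With these three conditions verified, I would then apply Lemma \ref{discrete_to_cont_extended} to the Fock-space element $g = (g_0, g_1, \ldots)$ with $g_k = (\sqrt{2}\beta)^k n^{k/2} p_k^n$. The tail condition \eqref{tail_condition} reduces, via the uniform bound $\|n^{k/2} p_k^n\|_{L^2} \le C^k \|\varrho_k\|_{L^2}$ from Lemma \ref{lemma:discrete_bound} and the estimate $\|\varrho_k\|_{L^2}^2 = (2^k \Gamma(k/2+1))^{-1}$ from Section \ref{brownian_chaos_section}, to the convergent series $\sum_k (2\beta^2 C^2)^k / (2^k \Gamma(k/2+1))$, which is uniformly summable in $n$ once $Q(\tilde{\omega}_n^2) \le 1 + o(1)$. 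Lemma \ref{discrete_to_cont_extended} then yields convergence in law to $I(\mathbf{\varrho}(\sqrt 2 \beta)) = \mathcal{Z}_{\sqrt 2 \beta}$, with the same $L^2$ approximation of $n^{k/2} p_k^n$ by $\varrho_k$ used in the proof of Proposition \ref{zbar_convergence_prop}.

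The main technical obstacle is the variance and uniform integrability estimate on $\tilde{\omega}_n$; everything else is a transcription of the proof for $\barz$. In particular it is the Taylor expansion of $\lambda$ near zero that supplies both the right variance normalization (giving $1+o(1)$ rather than some other constant) and the uniform bound $Q(\tilde{\omega}_n^2)^k \le (1+\epsilon)^k$ needed for the tail condition \eqref{tail_condition}. This is where the exponential-moment assumption is genuinely used; relaxing it would force us to confront the regime where the Gaussian approximation to $e^{\beta_n \omega - \lambda(\beta_n)} - 1 \approx \beta_n \omega$ is not valid for the largest values of $\omega$, consistent with the six-moment conjecture discussed in the remarks.
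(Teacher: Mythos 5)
Your proposal follows exactly the route the paper takes: define $\tilde{\omega}_n$ through $e^{\beta_n\omega-\lambda(\beta_n)}=1+\beta_n\tilde{\omega}_n$, observe that $e^{-n\lambda(\beta_n)}Z_n^{\omega}(\beta_n)=\symz_n^{\tilde{\omega}_n}(\beta_n)$, verify the hypotheses of Theorem \ref{wn_ustat_extension}, and then rerun the argument of Proposition \ref{zbar_convergence_prop} via Lemma \ref{discrete_to_cont_extended} and Lemma \ref{lemma:discrete_bound}. The paper compresses the verification of the hypotheses into ``it is straightforward to check,'' so your explicit computation $Q(\tilde{\omega}_n^2)=\beta_n^{-2}\bigl(e^{\lambda(2\beta_n)-2\lambda(\beta_n)}-1\bigr)=1+O(\beta_n)$ is a genuine addition, and it is correct.

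There is, however, one step in your verification that fails as written: the third hypothesis. For a fixed $\epsilon>0$ the condition $Q(\tilde{\omega}_n^2\indicate{|\tilde{\omega}_n|>\epsilon})\to 0$ cannot hold for a non-degenerate environment, and your own argument shows why: since $\tilde{\omega}_n\to\omega$ pointwise with $\tilde{\omega}_n^2$ uniformly integrable, dominated convergence gives convergence to $Q(\omega^2\indicate{|\omega|>\epsilon})$, which is strictly positive for small $\epsilon$, not zero. (This imprecision is inherited from the statement of Theorem \ref{wn_ustat_extension} itself.) The condition that is actually consumed in Step 1 of that theorem's proof --- the Lindeberg--Feller theorem applied to the triangular array of $\sim n^{3/2}(t_1-t_0)(x_1-x_0)$ summands each normalized by $n^{-3/4}$ --- is the Lindeberg condition at the scale of the normalization,
\begin{align*}
Q\left(\tilde{\omega}_n^2\,\indicate{|\tilde{\omega}_n|>\epsilon n^{3/4}}\right)\longrightarrow 0,
\end{align*}
and this does follow from the exponential moment assumption (a uniform-in-$n$ bound on, say, $Q|\tilde{\omega}_n|^3$, hence uniform integrability of $\tilde{\omega}_n^2$, suffices). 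So the substance of your check is salvageable: replace the threshold $\epsilon$ by $\epsilon n^{3/4}$ and drop the claim that the bounded part tends to zero by dominated convergence. Everything else --- the reduction to $\symz_n^{\tilde{\omega}_n}$, the tail estimate via $\|n^{k/2}p_k^n\|_{L^2}\le C^k\|\varrho_k\|_{L^2}$ and the summability of $(2\beta^2C^2)^k/(2^k\Gamma(k/2+1))$, and the final $L^2$ comparison of $n^{k/2}p_k^n$ with $\varrho_k$ --- matches the paper's proof.
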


\begin{pf}
Define the environment field $\tilde{\omega}_n$ by
\[
e^{\beta n^{-1/4} \omega(i,x) - \lambda(\beta n^{-1/4})} = 1 + \beta
n^{-{1}/4} \tilde{\omega}_n(i,x),
\]
so that
\begin{eqnarray*}
 Z_n^{\omega} \bigl(\beta n^{-{1}/{4}} \bigr)& = &\Pl \Biggl[
\prod_{i=1}^n \bigl(1 + \beta
n^{-{1}/{4}}\tilde{\omega }_n(i,S_i) \bigr) \Biggr]
\\
&= &\symz_n^{\tilde{\omega}_n} \bigl(\beta n^{-{1}/{4}} \bigr).
\end{eqnarray*}
It is straightforward to check that exponential moments for $\omega$
and the definition of $\lambda(\beta)$ imply that $\tilde{\omega}_n$
satisfy $Q(\tilde{\omega}_n) = 0$ and $Q(\tilde{\omega}_n^2) = 1 +
O(n^{-1/4})$, which are even stronger than the hypotheses of Theorem~\ref{wn_ustat_extension}. The proof is now completed by using Lemma~\ref
{discrete_to_cont_extended} and mimicking the proof of Proposition~\ref
{zbar_convergence_prop}; briefly, it follows that
\[
\sum_{k=0}^{\infty} \beta^k
n^{-3k/4} \S_k^n(\varrho_k; \tilde {
\omega }_n) \mathop{\longrightarrow}^{(d)} \mathcal{Z}_{\beta}
\]
as $n \to\infty$. This is clear by Lemma~\ref
{discrete_to_cont_extended} since $C^k \Vert  \varrho_k \Vert _{L^2}$ is
summable in $k$ for any $C > 0$. Furthermore
\[
\sum_{k=0}^{\infty} 2^{k/2}
\beta^k \S_k^n \bigl(\varrho_k -
n^{{k}/{2}} p_k^n; \tilde{\omega}_n
\bigr)
\]
also goes to zero for the same reason as in Proposition~\ref
{zbar_convergence_prop}, by using Lemma~\ref{lemma:discrete_bound} as before.
\end{pf}

\section{Convergence of the point-to-point partition functions}

In this section we describe the proofs behind Theorems \ref
{p2p_zn_convergence_theorem} and \ref{theorem:4_param_convergence}. The
arguments are based on the ones for the point-to-line partition function.

\subsection{The random local limit theorem}
\label{sec:rllt}

We extend the methods of the last section to prove convergence of the
endpoint density under intermediate disorder. We will prove all parts
of Theorem~\ref{p2p_zn_convergence_theorem} except for the tightness,
the proof of which is delayed until the \hyperref[app]{Appendix}. Much of what we
describe in this section is a relatively simple extension of the
convergence of the point-to-line partition function so we do not go
into as much detail.

Appealing to equation \eqref{p2p_expansion}, we easily see that the term
\[
\symz_n^{\omega} \bigl(x \sqrt{n}; \beta n^{-{1}/{4}}
\bigr)
\]
may be written as the sum of $n$ terms in a discrete Wiener chaos
expansion, in an analogous way to what was done in Lemma~\ref
{p2l_rewrite}. Define $p_{k|x}^n$ on $[0,1]^k \times\R^k$ by
\[
n^{k/2} p_{k|x}^n(\vt, \vx) = \Pl(
X_{\vt_1} \in d\vx_1, \ldots, X_{\vt
_k} \in d
\vx_k | X_1 \in dx ) \mathbf{1} \bigl\{ \lceil n \vt
\rceil \in D_k^n \bigr\},
\]
where $X_t = (S_{\lceil nt \rceil} + U_{\lceil nt \rceil})/\sqrt{n}$
is the
continuous time process as defined before in Section~\ref{sec2.1}. Using this,
we may rewrite \eqref{p2p_expansion} as
%
\begin{eqnarray}
\label{p2p_as_sk_sum} &&\Pl \Biggl[  \prod_{i=1}^n
\bigl( 1 + \beta n^{-{1}/{4}}\omega(i, S_i) \bigr) \Big|
S_n = x \sqrt{n} \Biggr]
\nonumber
\\[-8pt]
\\[-8pt]
\nonumber
&&\qquad = \sum_{k=0}^n
2^{{k}/{2}} \beta^k n^{-{3k}/{4}} \S_k^n
\bigl( n^{
{k}/{2}} p_{k|x}^n \bigr).
\end{eqnarray}
On the right-hand side the $\sqrt{n}$ spatial scaling on $x$ has
already been factored into the definition of $p_{k|x}^n$. On the other
hand, defining $\varrho_{k|x}$ to be the $k$-fold density of a Brownian
bridge from $0$ to $x$, that is,
\[
\varrho_{k|x}(\vt, \vx) = \frac{\varrho(1-\vt_k, x - \vx
_k)}{\varrho(1,
x)} \varrho_k(
\vt, \vx),
\]
it follows immediately from Lemma~\ref{discrete_to_cont} that
\[
\Biggl\{ x \mapsto\sum_{k=0}^{\infty}
n^{-3k/4} \S_k^n(\varrho_{k|x}) \Biggr\}
\mathop{\longrightarrow}^{(d)} \bigl\{ x \mapsto e^{A_{\beta}(x)} \bigr\}
\]
as $n \to\infty$, in the sense of convergence of finite dimensional
distributions; see~\eqref{Zbeta_x_def} for the definition of $A_{\beta
}(x)$. All that remains to be shown is that the difference between this
process and
\[
x \mapsto\sum_{k=0}^n n^{-3k/4}
\S_k^n \bigl(n^{k/2} p_{k|x}^n
\bigr)
\]
goes to zero as $n \to\infty$. The difference breaks into two terms,
the easiest of which to deal with is
\[
\sum_{k=n+1}^{\infty} 2^{k/2}
\beta^k n^{-3k/4} \S_k^n(
\varrho_{k|x}).
\]
By Lemma~\ref{sk_lemma} and the comments at the end of Section~\ref{brownian_chaos_section} this term goes to zero in $L^2$ at a rate that
is uniform in $x$. The other term to deal with is
\[
\sum_{k=0}^n 2^{k/2}
\beta^k n^{-{3k}/{4}} \S_k^n \bigl(\varrho
_{k|x} - n^{k/2} p_{k|x}^n \bigr),
\]
which has variance bounded above by
\[
\sum_{k=0}^n 2^k
\beta^{2k} \bigl\Vert \varrho_{k|x} - n^{k/2}
p_{k|x}^n \bigr\Vert _{L^2}^2.
\]
To show this latter term goes to zero requires another application of
dominated convergence as in the last section, and this uses the
estimates of Lemma~\ref{lemma:discrete_bound}.

To move from convergence of \eqref{p2p_as_sk_sum} to convergence of
$\symz_n^{\omega}(x \sqrt{n}; \beta n^{-{1}/{4}})$ is a simple
matter since the two differ by
only a factor of $p(n, x \sqrt{n})$. The local limit theorem
immediately implies that $\sqrt{n} p(n, x\sqrt{n})/2$ converges to
$\rho
(1,x)$ as $n \to\infty$, and modulo the tightness this completes the
proof of \eqref{Zbeta_on_parabola} that
\[
\biggl\{ x \mapsto\frac{\sqrt{n}}{2} \symz_n^{\omega} \bigl(x
\sqrt{n}; \beta n^{-{1}/{4}} \bigr) \biggr\} \mathop{\longrightarrow}^{(d)} \bigl\{ x
\mapsto e^{A_{\sqrt{2} \beta}(x)} \varrho(1,x) \bigr\}.
\]
Since Lemma~\ref{discrete_to_cont} also implies joint convergence of
the point-to-line partition function \textit{and} the point-to-point
partition function at any finite collection of points, this implies the
random local limit theorem for the endpoint density
\[
\biggl\{ x \mapsto\frac{\sqrt{n} \symz_n^{\omega}(x \sqrt{n}; \beta
n^{-{1}/{4}})/2}{ \symz_n^{\omega}(\beta n^{-{1}/{4}}
) } \biggr\} \mathop{\longrightarrow}^{(d)} \biggl\{ x
\mapsto\frac{e^{A_{\beta}(x)} \varrho
(1,x)}{\mathcal{Z}_{\beta}} \biggr\}.
\]

Finally it only remains to extend the result for the $\symz_n^{\omega
}$ partition
functions to the $Z_n^{\omega}$ ones. This is accomplished the same way
as in the last section by introducing the $\tilde{\omega}_n$
environment field and using the relation $Z_n^{\omega} = \symz
_n^{\tilde
{\omega}_n}$. The only extra work required is in showing that the
conditions of Lemma~\ref{discrete_to_cont_extended} are satisfied, but,
as for the point-to-line partition function, this is an easy
consequence of the estimates in the \hyperref[app]{Appendix}.

\subsection{Convergence of the four-parameter field}
\label{sec:4_param_field}

Now we prove convergence of the four-parameter field of transition
probabilities as stated in Theorem~\ref{theorem:4_param_convergence}.
As in the last section we defer the tightness until the \hyperref[app]{Appendix} and
concentrate on convergence of the finite dimensional distributions.
This follows the same scheme as before. Partition functions of the form
\[
\symz^{\omega}(m, y; k, x; \beta)
\]
[see \eqref{defn:4_param_modified} for the definition] are
point-to-point partition functions shifted to a different starting
point. Using the techniques of the last section we know that its law
converges under intermediate disorder scaling for the environment if
space and time are scaled diffusively, and hence this implies that
\[
\frac{\sqrt{n}}{2} \symz^{\omega} \bigl(ns, y \sqrt{n}; nt, x \sqrt{n};
\beta n^{-1/4} \bigr) \mathop{\longrightarrow}^{(d)} \mathcal{Z}_{\sqrt{2} \beta}(s,y;t,x)
\]
as $n \to\infty$. This can also be seen by expanding \eqref
{defn:4_param_modified} as a discrete chaos series of type \eqref
{p2p_as_sk_sum}, using kernels of the form
\begin{eqnarray*}
&& n^{{k}/{2}} p_{k|(s,y;t,x)}^n (\vt, \vx)\\
&&\qquad:= \Pl(
X_{\vt_1} \in d\vx _1, \ldots, X_{\vt_k} \in d
\vx_k | X_s \in dy, X_t \in dx ) \mathbf{1}
\bigl\{ \lceil nt \rceil \in D_k^n \bigr\}.
\end{eqnarray*}
These kernels are space--time shifts of the kernels $n^{k/2} p_{k|x}^n$.
Shift invariance of the underlying random walk implies that these
kernels are simple translates of each other. Since shifting the kernels
is equivalent to shifting the field of environment random variables,
and the law of the field is clearly shift invariant, the shifted
partition function is equal in law to the unshifted one. More
precisely, we have equality in law of
\[
\frac{\sqrt{n}}{{2}} \symz^{\omega} \bigl(ns, y \sqrt{n}; nt, x \sqrt {n};
\beta n^{-1/4} \bigr) \equiv\frac{\sqrt{n}}{2} \symz^{\omega}
\bigl(0,0; n(t-s), (x-y) \sqrt{n}; \beta n^{-1/4} \bigr).
\]
The law of the right-hand side converges by the arguments in Section~\ref{sec:rllt}, which is just the special case of $s = 0$ and $t = 1$.
For a finite collection of space--time points $(s_i, y_i; t_i, x_i)$ the
joint convergence of
\[
\frac{\sqrt{n}}{2} \symz^{\omega} \bigl(n s_i,
y_i \sqrt{n}; n t_i, x_i \sqrt{n};
\beta n^{-1/4} \bigr)
\]
follows by Lemma~\ref{discrete_to_cont}.

Similarly, partition functions of the form
\[
\symz_n^{\omega}(m, y; k, *; \beta)
\]
are point-to-line partition functions shifted to a different starting
point. They can be expanded into discrete Wiener chaos using the kernel
functions
\[
n^{{k}/{2}} p_{k|(s,y;t,*)}^{n}(\vt, \vx) = \Pl(
X_{\vt_1} \in d \vx_1, \ldots, X_{\vt_k} \in d
\vx_k | X_s \in dy ).
\]
Using the methods of Section~\ref{part_function_convergence_section}
their law converges under the intermediate disorder scaling on the
environment and diffusive scaling on space and time. Joint convergence of
\[
\symz^{\omega} \bigl(ns_i, y_i \sqrt{n};
nt_i, *; \beta n^{-1/4} \bigr)
\]
at finitely many points $(s_i, y_i; t_i)$ follows from Lemma~\ref
{discrete_to_cont}.

To convert convergence of the $\symz^{\omega}$ partition functions
into the
$Z^{\omega}$ partition functions, introduce again the field $\tilde
{\omega}_n$ and use the relation $Z^{\omega} = \symz^{\tilde{\omega
}_n}$. For example, it is clear under this definition that
\begin{eqnarray*}
&&\symz^{\tilde{\omega}_n} \bigl(ns, y \sqrt{n}; nt, x \sqrt{n}; \beta
n^{-1/4} \bigr)\\
&&\qquad = e^{-n(t-s) \lambda(\beta n^{-1/4})} Z^{\omega} \bigl(ns, y \sqrt
{n}; nt, x \sqrt{n}; \beta n^{-1/4} \bigr),
\end{eqnarray*}
up to a negligible difference caused by the term $\exp\{ -n(t-s)
\lambda(\beta n^{-1/4}) \}$ (since $t - s$ is not usually a multiple of
$1/n$). The left-hand side converges from the arguments just discussed,
and joint convergence at a finite collection of space--time points
$(s_i, y_i; t_i, x_i)$ follows from Lemma~\ref
{discrete_to_cont_extended}. A similar argument shows convergence of
the finite dimensional distributions for the field of $Z^{\omega}$
point-to-line partition functions.

\section{Concluding remarks}
\label{sec:conclusion}

We end with a few brief observations and ideas for future work.

\subsection{Supercritical scaling}

The $n^{-1/4}$ scaling on the environment is the hallmark of the
intermediate disorder regime. Under this scaling the law of the random
polymer measure converges, although with probability one the polymer
measure itself does not converge. In this respect the $n^{-1/4}$
scaling is sharp, meaning that if one replaces $1/4$ by any larger
exponent then the resulting polymer would be in the weak disorder
regime. More precisely:

\begin{mainn*}
Under the scaling $\beta_n = \beta n^{-({1}/4 + \delta)}$ for any \mbox{$\delta> 0$}:
\begin{itemize}
\item the partition function $e^{-n \lambda(\beta_n)} Z_n^{\omega
}(\beta n^{-(1/4 + \delta)})$ converges in probability to~$1$;
\item the endpoint density, under diffusive scaling of space, converges
to the standard Gaussian distribution;
\item the transition probabilities converge, under diffusive scaling of
space and time, to the transition probabilities for standard Brownian motion.
\end{itemize}
\end{mainn*}

The idea behind the proof of these results is already apparent in the
proofs of Theorems \ref{Zn_convergence_theorem} and \ref
{p2p_zn_convergence_theorem}. The extra term of $n^{-\delta}$ in the
exponent sends each of the random terms of \eqref{Zbar_expansion2} and
\eqref{p2p_expansion} to zero (in $L^2$), and all that survives is the
deterministic first-order terms. These correspond to Gaussian endpoint
fluctuations. Full details for the point-to-line partition function are
given in~\cite{feng:diffusive}.

\subsection{Subcritical scaling}

Replacing the $1/4$ exponent by a smaller value produces a regime that
we are unable to analyze rigorously but for which we have many
conjectures. Consider the scalings $\beta_n:= \beta n^{-\alpha}$ for
$0 \leq\alpha< 1/4$. Under this scaling our methodology breaks down
because the individual terms of the discrete Wiener chaos blow up as $n
\to\infty$. Our conjectured value for the fluctuation exponents are
\[
\chi(\alpha) = \tfrac{1}3 (1 - 4\alpha), \qquad\zeta(\alpha) =
\tfrac{2}3(1-\alpha).
\]
Observe that these values linearly interpolate between the conjectured
values of $\zeta= 2/3, \chi=1/3$ at $\alpha= 0$, and the values
$\zeta= 1/2, \chi=0$ that we prove in this paper. Moreover, $\chi
(\alpha)$ and $\zeta(\alpha)$ satisfy the relation
\[
\chi(\alpha) = 2 \zeta(\alpha) - 1,
\]
which is already predicted in the $\alpha= 0$ case. Further details of
this conjecture (and others) are discussed in \cite{AKQ:prl}.

\subsection{Assumptions on moments}

In this paper we have two different sets of assumptions on the moments
of the environment random variables, each set corresponding to a
different Hamiltonian used to construct the polymer measure. In the
first case, for the partition function $\symz_n^{\omega}$, all of our
results on
intermediate disorder only require that the environment variables have
finite variance for convergence of the finite dimensional
distributions. This is essentially a consequence of the central limit
theorem. However, for the more commonly studied partition function
$Z_n^{\omega}$, it is clear that more moments are required. In order
for the partition function $Z_n^{\omega}(\beta)$ to have finite mean it
is necessary that the environment variables have finite exponential
moments. It is possible, however, that even without this assumption the
$Z_n^{\omega}$ partition function converges to the same limits as
before, and we believe this to be the case. In fact we believe that it
is sufficient that the environment variables have more than $6$
moments. Loosely speaking, this conjecture is based on the idea that
the path measure with enough moments has diffusive scalings under
intermediate disorder. This suggests that $n^{3/2}$ environment
variables are all that is contributing to the partition function, and
as long as no single one of these variables is dominant, the
convergence will be enforced by the central limit theorem. Since each
environment variable is multiplied by a factor of $n^{-1/4}$ under
intermediate disorder, a simple Chebyshev bound shows that greater than
$6$ moments is sufficient to keep all of them order one. A similar
argument indicates that the condition of $6$ moments is sharp, and that
partition functions with less than $6$ moments do not converge to the
universal limits in the intermediate disorder regime. It is interesting
to compare this moment condition with the case of the unscaled
environment (strong disorder), where it is widely believed \cite
{BBP:top_eigenvalue} that greater than $4$ moments is sufficient for
convergence to the universal limit governed by Tracy--Widom laws.

\subsection{Crossover on the process level}

As was explained, under proper scalings the one-point marginal
distributions of the process $A_{\beta}(x)$ cross over from a Gaussian
distribution to the Tracy--Widom GUE. It is natural then to conjecture
that the process converges from a stationary Gaussian process to the
$\Ai_2$ process, with proper scalings in the $x$ variable as $\beta$
approaches $0$ and $\infty$. It is tempting to try to prove at least
the tightness in the limit as $\beta\to\infty$ using the control
provided by the convergent power series for the process $A_{\beta}(x)$.
Such hypothetical asymptotic analysis in the limit $\beta\to\infty$
will not produce an exact expression for the $\Ai_2$ process, but might
shed some light on the critical exponents $2/3$ and $1/3$, and might
help to explain the localization phenomenon for the path measure in the
strong disorder regime.

\begin{appendix}\label{app}

\section{Bounds on discrete random walk probabilities}

\begin{lemma}\label{lemma:discrete_bound}
There exists a constant $C > 0$ such that for all $k \geq0$ and $x
\in\R$,
\[
\sup_n \bigl\Vert n^{k/2} p_k^n
\bigr\Vert _{L^2} \leq C^k \Vert \varrho_k\Vert
_{L^2}, \qquad\sup_n\bigl \Vert n^{k/2}
p_{k|x}^n\bigr\Vert _{L^2} \leq C^k \Vert
\varrho_{k|x}\Vert _{L^2}.
\]
\end{lemma}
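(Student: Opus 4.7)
The plan is to reduce each $L^2$ norm to an exact discrete sum, apply a uniform Gaussian upper bound on the random-walk transition kernel, and compare the resulting multi-dimensional sums to Dirichlet integrals. The free kernel admits a clean proof; the bridge kernel requires an additional local CLT input to cancel the $x$-dependence.

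\textbf{Free case.} Since $n^{k/2} p_k^n$ is constant with value $2^{-k} n^{k/2} p_k(\vi, \vx)$ on each rectangle of $\mathcal{R}_k^n$ (of Lebesgue measure $2^k n^{-3k/2}$),
\begin{align*}
\|n^{k/2} p_k^n\|_{L^2}^2 = 2^{-k} n^{-k/2} \sum_{\vi \in D_k^n} \sum_{\vx:\, \vi \leftrightarrow \vx} p_k(\vi, \vx)^2 = 2^{-k} n^{-k/2} \sum_{\vi \in D_k^n} \prod_{j=1}^k \gamma(i_j - i_{j-1}),
\end{align*}
where independence of increments factors the inner sum and $\gamma(m) := \sum_y p(m, y)^2 = \binom{2m}{m} 4^{-m}$ by Vandermonde. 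Stirling yields an absolute $C_0$ with $\gamma(m) \le C_0 m^{-1/2}$ for all $m \ge 1$. Applying $m^{-1/2} \le \int_{m-1}^m s^{-1/2}\, ds$ coordinate-wise and the Dirichlet formula,
\begin{align*}
\sum_{\vi \in D_k^n} \prod_{j=1}^k (i_j - i_{j-1})^{-1/2} \le \int_{\vs \in \R_+^k,\ \sum_j s_j \le n} \prod_j s_j^{-1/2}\, d\vs = \frac{n^{k/2} \pi^{k/2}}{\Gamma(k/2+1)}.
\end{align*}
Combined with $\|\varrho_k\|_{L^2}^2 = 2^{-k}/\Gamma(k/2+1)$ from Section \ref{brownian_chaos_section}, this gives the free bound.

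\textbf{Bridge case.} Setting $\tilde x := \rnd{x \sqrt n}_n$, $i_{k+1} := n$ and $x_{k+1} := \tilde x$, the same reduction yields
\begin{align*}
\|n^{k/2} p_{k|x}^n\|_{L^2}^2 = \frac{2^{-k} n^{-k/2}}{p(n, \tilde x)^2} \sum_{\vi \in D_k^n} \sum_{\vx:\, \vi \leftrightarrow \vx} \prod_{j=1}^{k+1} p(\Delta i_j, \Delta x_j)^2.
\end{align*}
The inner sum no longer factors, so I would apply a uniform pointwise Gaussian bound $p(i, y) \le C_1 \varrho(i, y)$ (valid for $i \ge 1$, $i \leftrightarrow y$) to dominate $\prod p^2 \le C_1^{2(k+1)} \prod \varrho^2$, and then evaluate the discrete Gaussian convolution mirroring the computation of Section \ref{brownian_chaos_section}:
\begin{align*}
\sum_{\vx:\, \vi \leftrightarrow \vx} \prod_{j=1}^{k+1} \varrho(\Delta i_j, \Delta x_j)^2 \le C_2^{k+1}\, \varrho(n/2, \tilde x) \prod_{j=1}^{k+1} (\Delta i_j)^{-1/2}.
\end{align*}
The local limit theorem \eqref{llt} provides $p(n, \tilde x) \ge c\, n^{-1/2} \varrho(1, x)$, whence
\begin{align*}
\frac{\varrho(n/2, \tilde x)}{p(n, \tilde x)^2} \le C_3\, \sqrt n \cdot \frac{\varrho(1/2, x)}{\varrho(1, x)^2} = 2 C_3\, \sqrt{\pi n}
\end{align*}
\emph{uniformly in $x$}, by the elementary identity $\varrho(1/2, x) = 2 \sqrt \pi\, \varrho(1, x)^2$. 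A final Dirichlet comparison bounds $\sum_{\vi \in D_k^n} (n - i_k)^{-1/2} \prod_{j=1}^k (\Delta i_j)^{-1/2} \le n^{(k-1)/2}\, \pi^{(k+1)/2}/\Gamma((k+1)/2)$, and matching against $\|\varrho_{k|x}\|_{L^2}^2 = \sqrt \pi /(2^k \Gamma((k+1)/2))$, which is also $x$-independent by the same identity, closes the estimate.

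\textbf{Main obstacle.} The core technical input is the uniform Gaussian bound $p(i, y) \le C_1 \varrho(i, y)$: Stirling handles the CLT window $|y| \le i^{2/3}$ directly, but for $|y|$ up to $i$ one needs the positivity of the higher-order correction in the Kullback-Leibler rate $H(y/i) = y^2/(2i) + O(y^4/i^3)$. A secondary subtlety is the Riemann-sum approximation of the discrete Gaussian convolution, which can be justified by Poisson summation. The exact cancellation of the $x$-dependent prefactor is not a coincidence: $\|\varrho_{k|x}\|_{L^2}^2$ is itself independent of $x$, forcing any valid bound to reproduce this structure.
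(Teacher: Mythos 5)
Your free-kernel argument is essentially the paper's. The paper bounds $p_k(\vi,\vx)^2 \le \bigl(\max_{\vy} p_k(\vi,\vy)\bigr)\, p_k(\vi,\vx)$, uses $\sum_{\vx} p_k(\vi,\vx)=1$ together with the uniform bound $\sqrt{i}\, p(i,x) \le C$ to produce the factor $C^k \prod_j (\vi_j - \vi_{j-1})^{-1/2}$, and then compares $\sum_{\vi \in D_k^n} \prod_j (\vi_j - \vi_{j-1})^{-1/2}$ with the same Dirichlet integral you use; your exact evaluation of $\sum_y p(m,y)^2$ via Vandermonde is a marginally sharper, but interchangeable, way of arriving at the same product of reciprocal square roots. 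Where you genuinely diverge is the bridge case. The paper disposes of it in one line by the factorization $p_{k|x}^n(\vt,\vx) = p_k^n(\vt,\vx)\, p_1^n(1-\vt_k, x - \vx_k)/p_1^n(1,x)$, i.e.\ it runs the identical max-times-sum argument with one extra increment $(n - \vi_k)^{-1/2}$ in the Dirichlet comparison and the single denominator $p(n, \rnd{x\sqrt{n}}_n)$ pulled out front; you instead deploy a global Gaussian domination $p(i,y) \le C_1 \varrho(i,y)$, a discrete Gaussian convolution estimate, and a local-CLT lower bound, with the explicit aim of making the constant uniform in $x$. Your route buys the (prettier) exact cancellation against the $x$-independent $\|\varrho_{k|x}\|_{L^2}$, at the price of noticeably heavier inputs than the paper needs.

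The one step that does not hold as you state it is the lower bound $p(n, \rnd{x\sqrt{n}}_n) \ge c\, n^{-1/2} \varrho(1,x)$ ``uniformly in $x$''. The additive-error local limit theorem \eqref{llt} yields this only when $\varrho(1,x) \gg n^{-1}$, i.e.\ for $|x| \lesssim \sqrt{\log n}$, and in the large-deviation regime $x\sqrt{n} \asymp n$ --- which is unavoidable for every fixed large $x$ at the finitely many small $n$ over which the supremum in the lemma is taken --- one has $p(n,y) \approx e^{-nH(y/n)}$ with $H(\alpha) > \alpha^2/2$ strictly, so the ratio $\varrho(n/2, x\sqrt{n})/p(n, x\sqrt{n})^2$ is not $O(\sqrt{n})$ uniformly. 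Consequently your argument, like the paper's, really establishes the bridge bound with a constant that is uniform only for $x$ in a compact set (equivalently, for each fixed $x$); that is all the applications in Sections 5 and 6 require, but the literal $x$-uniform claim you advertise is not reached. If you want to keep the uniformity claim, you should either restrict to $n \ge n_0(x)$ or handle the large-deviation window separately with the trivial bound $\sum_{\vx} p^n_x(\vi,\vx)^2 \le 1$.
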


\begin{pf}
First observe that there exists a constant $C$ such that $\sqrt{i}
p(i,x) \leq C$ for all $i$ and $x$, and therefore
\[
\sup_{\vx\in\Z^k} p_k(\vi, \vx) \leq C^k
\prod_{j=1}^k \frac
{1}{\sqrt {\vi_j - \vi_{j-1}}}.
\]
From this and by definition of $p_k^n$ we have
\begin{eqnarray*}
\bigl\Vert n^{k/2} p_k^n\bigr \Vert _{L^2}^2
&=& n^k \sum_{\vi\in D_k^n} \sum
_{\vx\in
\Z
^k} p_k(\vi, \vx)^2
n^{-{3k}/{2}}
\\
&\leq& n^{-k/2} \sum_{\vi\in D_k^n} \max
_{\mathbf{y} \in\Z^k} p_k(\vi,\mathbf{y}) \sum
_{\vx\in\Z^k} p_k(\vi, \vx)
\\
&\leq &C^k n^{-k/2} \sum_{\vi\in D_k^n}
\prod_{j=1}^k \frac{1}{\sqrt {\vi
_j - \vi_{j-1}}}
\\
&=& C^k n^{-k} \sum_{\vi\in D_k^n}
\prod_{j=1}^k \biggl( \frac{\vi_j}{n} -
\frac{\vi_{j-1}}{n} \biggr)^{-1/2}
\\
&\leq& C^k \int_{\Delta_k} \prod
_{j=1}^k \frac{1}{\sqrt{\vt_j -
\vt
_{j-1}}} \,d \vt
\\
&\leq& \bigl(C' \bigr)^k \Vert \varrho_k
\Vert _{L^2}^2.
\end{eqnarray*}
The second-to-last inequality between the sum and the integral is an
easy consequence of $x \mapsto1/\sqrt{x}$ being a decreasing function,
and the sum being a ``right-endpoint'' approximation of the integral.
The second inequality is a simple extension of the first, using that
\[
p_{k|x}^n(\vt, \vx) = \frac{p_k^n(\vt, \vx) p_1^n(1 - \vt_k, x -
\vx
_k)}{p_1^n(1, x)}.
\]
\upqed\end{pf}

\section{Tightness}

In this section we consider the discrete time and space process
%
\begin{equation}
\label{eqn:tightness_discrete_time_space} \symz^{\omega}(k,x;\beta) = \Pl \Biggl[ \prod
_{i=1}^k \bigl(1 + \beta\omega (i,
S_i) \bigr) \mathbf{1} \{ S_k = x \} \Biggr].
\end{equation}
We will show that the continuous time and space processes
%
\begin{equation}
\label{eqn:tightness_cont_field} (t,x) \mapsto z_n^{\omega}(t,x):= \sqrt{n}
\symz^{\omega} \bigl(nt, x \sqrt{n}; n^{-1/4} \bigr)
\end{equation}
are tight as random elements in $C([\varepsilon, T] \times\R)$ for any $0
< \varepsilon< T < \infty$. Observe that since $\symz^{\omega}$ is
only defined
at lattice points, it requires some interpolation to extend
$z_n^{\omega
}$ to a continuous function of space and time. The exact interpolation
scheme will not really matter, but to be concrete we define it as
follows: at points $(t,x) \in[0,1] \times\R$ such that $(t,x)$ is a
corner point of the left-hand side of a rectangle in $\mathcal{R}_n$,
define $z_n^{\omega}$ according to \eqref{eqn:tightness_cont_field}.
Then for space--time points on the left edges of rectangles in $\mathcal
{R}_n$ define $z_n^{\omega}$ by linear interpolation of the values on
the corners that the edge connects, and finally for $(t,x)$ on the
interior points of rectangles define $z_n^{\omega}$ by linear
interpolation of the values at the four boundary corners.

The main idea for proving tightness will be to use equation \eqref
{eqn:tightness_discrete_time_space} to get a stochastic difference
equation for $\symz^{\omega}$, and then transfer this to a stochastic
difference equation for $z_n^{\omega}$ which approximates the
stochastic heat equation. Standard SPDE estimates from \cite
{walsh:st_flour}, which show the regularity in space and time of the
stochastic heat equation, are then shown to hold uniformly in $n$,
which will prove the tightness.

\begin{remark*}
The same argument will also show the tightness for the analogously
rescaled partition function
\[
e^{-n\lambda(\beta n^{-{1}/4})}\sqrt{n} Z^{\omega} \bigl(nt, x \sqrt{n}; \beta
n^{-1/4} \bigr)
\]
from the exponential model
\[
Z^\omega(k,x;\beta) = \Pl \Biggl[ \exp \Biggl\{ \beta\sum
_{i=1}^k \omega(i, S_i) \Biggr\}
\mathbf{1} \{ S_k = x \} \Biggr],
\]
because it can be written in the form (\ref
{eqn:tightness_discrete_time_space}) using the field $\tilde{\omega}_n$
as described in Proposition~\ref{prop:Z_p2l_convergence}.
\end{remark*}

\begin{remark*}
Tightness of the two-parameter field \eqref{eqn:tightness_cont_field}
is sufficient to prove tightness of the four-parameter field
%
\begin{equation}
\label{eqn:tightness_cont_4_field} (s,y;t,x) \mapsto\sqrt{n} \symz^{\omega} \bigl(ns, y
\sqrt{n}; nt, x \sqrt{n}; n^{-1/4} \bigr),
\end{equation}
where $\symz^{\omega}$ is defined by
\[
\symz^{\omega}(m,y; k, x; \beta) = \Pl \Biggl[  \prod
_{i=m+1}^{k} \bigl(1 + \beta \omega (i,
S_i) \bigr) \mathbf{1} \{ S_k = x \} \Big| S_m
= y \Biggr].
\]
Indeed, tightness of \eqref{eqn:tightness_cont_field} implies tightness
of \eqref{eqn:tightness_cont_4_field} in the forward $(t,x)$ variables,
and tightness in the $(s,y)$ variables follows from reversibility of
the random walk and the fact that the law of the environment field is
invariant under a similar time reversal. More precisely, define a field
$\omega_n$ by $\omega_n(i,x) = \omega(n-i, x)$. Then it is clear that
\begin{eqnarray*}
&&\bigl(1 + \beta\omega(n-m,y) \bigr) \symz^{\omega}(m,y;k,x;\beta)\\
&&\qquad =
\bigl(1 + \beta\omega (n-k,x) \bigr) \symz^{\omega_n}(n-k, x; n-m, y; \beta),
\end{eqnarray*}
hence the backward variables become the forward variables after
reversing the direction of time.
\end{remark*}

To obtain the difference equation for $\symz^{\omega}$ simply
condition on the
step between times $k$ and $k+1$ to obtain
%
\begin{eqnarray}
\label{eqn:tightness_bar} &&\symz^{\omega}(k+1, x; \beta)
\nonumber
\\[-8pt]
\\[-8pt]
\nonumber
&&\qquad = \tfrac{1}2 \bigl( 1
+ \beta\omega(k+1,x) \bigr) \bigl[ \symz^{\omega}(k, x+1; \beta) +
\symz^{\omega}(k, x-1;\beta) \bigr].
\end{eqnarray}
Throughout we let
\[
\overline{\symz^{\omega}}(k,x;\beta) = \tfrac{1}2 \bigl[
\symz^{\omega
}(k, x+1; \beta) + \symz^{\omega}(k, x-1; \beta) \bigr]
\]
so that equation \eqref{eqn:tightness_bar} becomes
\[
\symz^{\omega}(k+1, x; \beta) = \bigl(1 + \beta\omega(k+1, x) \bigr)
\overline{\symz^{\omega}}(k, x; \beta).
\]
Observe that the first and second terms of the right-hand side are
independent of each other. Subtracting $\symz^{\omega}(k,x;\beta)$
from both
sides of \eqref{eqn:tightness_bar} therefore yields
%
\begin{eqnarray}
\label{eqn:tightness_SHE} &&\symz^{\omega}(k+1, x; \beta) -\symz^{\omega}(k,x;
\beta)
\nonumber
\\[-8pt]
\\[-8pt]
\nonumber
&&\qquad= \tfrac{1}2 \Delta\symz^{\omega} (k,x;\beta) + \beta
\omega(k+1, x) \overline{\symz^{\omega
}}(k,x;\beta).
\end{eqnarray}
This is a discrete version of the stochastic heat equation, with
initial condition $\symz^{\omega}(0,x;\beta) = \mathbf{1} \{ x=0 \}
$. An immediate
advantage of equation \eqref{eqn:tightness_SHE} is that it allows for
an ``integral form'' representation of $\symz^{\omega}$. In general
if $Z$ is a
solution to
\[
Z(k+1,x) - Z(k,x) = \tfrac{1}2 \Delta Z(k,x) + f(k+1,x),
\]
then it is an easy consequence of the superposition principle that
\[
Z(k,x) = \E_{x} \bigl[ Z(0, S_k) \bigr] + \sum
_{i=1}^k \E_{x} \bigl[ f(i,
S_{k-i}) \bigr],
\]
where the expectation is over simple random walks $S$ beginning from
$x$. Applying this to equation \eqref{eqn:tightness_SHE} yields
\begin{eqnarray*}
\symz^{\omega}(k,x;\beta) &=& \Pl_x(S_k = 0) +
\beta\sum_{i=1}^k \E_x
\bigl[ \omega (i, S_{k-i}) \overline{\symz^{\omega}}(i-1,
S_{k-i}; \beta) \bigr]
\\
&= & p(k,x) + \beta\sum_{i=1}^k \sum
_y \omega(i,y) \overline{\symz
^{\omega}}(i-1, y; \beta) p(k-i, y-x).
\end{eqnarray*}
Now we translate this into an equation for the rescaled field (\ref
{eqn:tightness_cont_field}),
%
\begin{eqnarray}
\label{eqn:tightness_discrete_duhamel} z_n(t,x) &=& p_n(t,x)
\nonumber
\\[-8pt]
\\[-8pt]
\nonumber
&&{}+ n^{-3/2}
\mathop{\sum_{s\in[0,t]\cap
n^{-1}\mathbb{Z}}}_{ y \in n^{-1/2}\mathbb{Z}}
p_n(t-s,x-y)\overline{z}_n(s,y){\omega}(s+1/n,y),
\end{eqnarray}
where $\overline{z}_n$ is the rescaled analogue of $\overline{\symz
^{\omega}}$, and
$p_n(t,x) = \sqrt{n} \overline{p}_1(nt, x \sqrt{n}) = \sqrt{n}
p(\lceil nt \rceil, [x \sqrt{n}]_{\lceil nt \rceil})$ is the
rescaled point-to-point
transition probabilities for the random walk.
Observe that these $p_n$ are order one in $n$, and in fact converge to
$\varrho$ as $n \to\infty$. The key point, however, is the $n^{-3/2}$
in the second term, which is a consequence of the diffusive scaling in
space and time and the intermediate scaling on the environment. By
extending $\omega$ to a function that is piecewise constant on
rectangles of $\mathcal{R}_1$, we can rewrite \eqref
{eqn:tightness_discrete_duhamel} in an integral form
%
\begin{equation}
\label{78}\qquad z_n(t,x) = p_n(t,x) + \int
_0^t \int_{\mathbb{R}}
p_n(t-s,x-y) \overline {z}_n(s,y){\omega}(s + 1/n,y) \,dy
\,ds.
\end{equation}
A key fact is that the $\omega$ terms are independent of the
$\overline
{z}_n$ terms, since the term $\overline{z}_n(s, \cdot)$ is a function
of only the $\omega(i, \cdot)$ variables with $i \leq\lfloor ns
\rfloor
$. Using this fact one can derive the following a priori estimate, a
proof of which can be modified from Lemma~3.1 of \cite{ACQ}.

\begin{lemma} \label{apriori}Suppose that $z_n$ satisfies (\ref{78})
where the $\omega(i,x)$ are independent and
identically distributed with mean zero and $M$ finite moments, where $M
\geq2$. Then there exist $C, C_M$ such that for any $s>0$, $y \in\R$
and $n \geq1$,
\[
E \bigl[ z_n(s,y)^2 \bigr] \le C \varrho(s,y)^2,\qquad
E \bigl[ z_n(s,y)^M \bigr]\le C_M
\varrho(s,y)^M.
\]
The same bounds also extend to $\overline{z}_n$.
\end{lemma}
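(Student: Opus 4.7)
The plan is to adapt the Picard/Duhamel moment scheme for the stochastic heat equation (as in Walsh's notes and Lemma 3.1 of \cite{ACQ}) to the discrete lattice setting, exploiting the independence structure already highlighted in the excerpt: since $\overline{z}_n(s,y)$ depends only on $\omega(i,\cdot)$ with $i \le \lfloor ns \rfloor$, it is independent of $\omega(s + 1/n, y)$.

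For the $L^2$ bound, square (\ref{78}) and take expectations. The cross term vanishes by independence and $E[\omega] = 0$; in the squared stochastic integral, the piecewise-constant extension of $\omega$ on cells of volume $v_n = n^{-3/2}$ combined with $E[\omega^2] = 1$ yields
\begin{align*}
F_n(t,x) \;:=\; E[z_n(t,x)^2] \;=\; p_n(t,x)^2 + \int_0^t\!\!\int_\R p_n(t-s,x-y)^2 E[\overline{z}_n(s,y)^2]\,dy\,ds,
\end{align*}
with $E[\overline{z}_n(s,y)^2]\le C F_n(s,y)$ by convexity since $\overline{z}_n$ averages two neighboring values of $z_n$. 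The crucial factorization is the Chapman-Kolmogorov identity for the heat kernel,
\begin{align*}
\varrho(t-s,x-y)\varrho(s,y) \;=\; \varrho(t,x)\, q_{s,t,x}(y),
\end{align*}
where $q_{s,t,x}$ is the Gaussian Brownian bridge density at time $s$ of variance $s(t-s)/t$, so that $\int q_{s,t,x}(y)^2\,dy = \frac{1}{2\sqrt{\pi}}\sqrt{t/(s(t-s))}$. Combined with the uniform domination $p_n(t,x) \le C\,\varrho(t,x)$, obtained from the same type of local limit theorem bounds as in Lemma~\ref{lemma:discrete_bound}, this produces, after normalizing by $\varrho(t,x)^2$ and setting $\psi_n(t) := \sup_x F_n(t,x)/\varrho(t,x)^2$, a Volterra-type inequality
\begin{align*}
\psi_n(t) \;\le\; 1 + C\int_0^t \sqrt{\frac{t}{s(t-s)}}\,\psi_n(s)\,ds.
\end{align*}
The $(t-s)^{-1/2}$ singularity is locally integrable; iterating the inequality once smooths the kernel into a bounded one, after which standard Gronwall yields $\psi_n(t) \le C$ on $[0,T]$ uniformly in $n$, which is the first bound.

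For the $M$-th moment bound with $M \ge 2$, I would exploit the fact that the discrete stochastic integral is a martingale in time whose step-$k$ increment is a sum over $y$ of mean-zero terms that are independent conditional on the past environment. Burkholder-Davis-Gundy applied to the time-martingale, combined with Rosenthal's inequality for the inner spatial sum, yields
\begin{align*}
\|z_n(t,x)\|_M \;\le\; p_n(t,x) + C_M \Bigl\|\int_0^t\!\!\int p_n(t-s,x-y)^2\,\overline{z}_n(s,y)^2\,dy\,ds\Bigr\|_{M/2}^{1/2} + R_n,
\end{align*}
where the remainder $R_n$ carries a factor $E[|\omega|^M]$ (so the assumption of $M$ finite moments is used precisely here) and vanishes in $n$ thanks to the lattice volume factors. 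Applying Minkowski's integral inequality to the quadratic-variation term reduces everything to the same Volterra scheme as above, now for $\sup_x \|z_n(t,x)\|_M/\varrho(t,x)$, closing the estimate and giving $\|z_n(t,x)\|_M \le C_M\,\varrho(t,x)$. The bounds for $\overline{z}_n$ then follow immediately from those for $z_n$ by convexity. The principal obstacle throughout is the $(t-s)^{-1/2}$ singularity of the Volterra kernel, which prevents a one-step Gronwall closure and necessitates iteration (or a fractional Gronwall argument); $n$-uniformity of the estimates is preserved because the Gaussian bound on $p_n$ holds uniformly in $n$ and the discrete lattice factors exactly match the white-noise integration variance in the scaling limit.
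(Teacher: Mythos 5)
Your argument is correct and is essentially the proof the paper intends: the paper does not prove Lemma \ref{apriori} itself but defers to Lemma 3.1 of \cite{ACQ}, whose Duhamel-iteration and martingale-moment scheme is exactly what you reconstruct (independence kills the cross terms, the Chapman--Kolmogorov factorization gives a Volterra inequality with an integrable $(s(t-s))^{-1/2}$ kernel, and Burkholder plus Rosenthal handles $M>2$). The only cosmetic looseness is that a single iteration of the kernel produces $\pi\sqrt{t/s}$, which is still singular at $s=0$ though integrable, so one more iteration (or a fractional Gronwall lemma) is needed before the kernel is bounded, and for each fixed $n$ one should record that $\sup_x E[z_n(t,x)^2]/\varrho(t,x)^2<\infty$ a priori so that the Gronwall argument may be applied.
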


We will start the Duhamel formula (\ref{78}) with the data at time
$0<\varepsilon<t$,
%
\begin{eqnarray}
\label{78'} z_n(t,x) &=& \int p_n(t-
\varepsilon,x-y)z_n(\varepsilon, y) \,dy \nonumber\\
&&{}+ \int_\varepsilon
^t \int_{\mathbb{R}} p_n(t-s,x-y)
\overline{z}_n(s,y){\omega}(s + 1/n,y) \,dy \,ds
\\
&=:& A_{n,\varepsilon}(t,x) + U_{n,\varepsilon}(t,x).\nonumber
\end{eqnarray}
We develop modulus of continuity estimates for $A_{n,\varepsilon}(t,x)$
and $U_{n,\varepsilon}(x,t)$
based on Lemma~\ref{apriori}. We give the details for $U_{n,\varepsilon
}(x,t)$, as the analogous results for $A_{n,\varepsilon}(t,x)$
are much more straightforward. First, we use a discrete Burkholder
inequality (or equivalently the Marcinkiewicz--Zygmund inequality; see,
e.g., \cite{petrov}, page 61) which tells us that there is a constant
$C_M<\infty$ such that
\begin{eqnarray*}
&& E  \bigl[ \bigl| U_{n,\varepsilon}(x+\delta, t) - U_{n,\varepsilon}(x,t)\bigr|^M
\bigr] \\
&&\qquad\le
 C_M E \biggl[ \biggl( \int_\varepsilon^t
\int \bigl(p_n(t-s,x+\delta-y)\\
&&\hspace*{103pt}{}-p_n(t-s,x-y)
\bigr)^2 z_n(s,y)^2 \,dy \,ds
\biggr)^{M/2} \biggr].
\end{eqnarray*}
Apply H\"older's inequality with $p=M/2, q=M/(M-2)$ to bound this by
\begin{eqnarray*}
&&C'_M \biggl( \int_\varepsilon^t
\int \bigl(p_n(t-s,x+\delta-y)-p_n(t-s,x-y)
\bigr)^{{2M}/{(M-2)}} \,dy \,ds \biggr)^{{(M-2)}/{2}}\\
&&\qquad{}\times E \biggl[ \int
_\varepsilon^t \int z_n(s,y)^M
\,dy \,ds \biggr].
\end{eqnarray*}
From Lemma~\ref{apriori} we have a bound on the second term that is
independent of $n$
%
\begin{equation}
\label{eqn:tightness_expectation_bound} E \biggl[ \int_\varepsilon^t \int
z_n(s,y)^M \,dy \,ds \biggr] \le C_M.
\end{equation}
Note that this bound does depend on $t$ and $\varepsilon$, but we are
assuming that they are both within a compact interval that is bounded
away from zero and hence there is no issue. Further, one can check that
there is a $C$ also depending only on $t,\varepsilon>0$ such that
\begin{eqnarray*}
&&\biggl( \int_\varepsilon^t \int \bigl(p_n(t-s,x+
\delta-y)-p_n(t-s,x-y) \bigr)^{{2M}/{(M-2)}} \,dy \,ds
\biggr)^{{(M-2)}/{2}}\\
&&\qquad \le C_M \delta^{{M}/2 -1}.
\end{eqnarray*}
This is proved for the heat kernel in Walsh, and can be extended to the
present discrete case
by the local central limit theorem. Combining these estimates together
gives the existence of a constant $C_M$ such that
%
\begin{equation}
\label{eqn:x_bound} E \bigl[ \bigl| U_{n,\varepsilon}(x+\delta, t)
- U_{n,\varepsilon}(x,t)\bigr|^M
\bigr] \leq C_M \delta^{{M}/2 - 1}.
\end{equation}

We now produce similar estimates for $E[ | U_{n,\varepsilon}(x, t+h) -
U_{n,\varepsilon}(x,t)|^M]$. Writing out the difference we see that it
splits into the sum of two terms, and using the inequality $(a+b)^M
\leq2^M(a^M + b^M)$ together with the Burkholder inequality of above,
we upper bound the expectation by the sum of
%
\begin{eqnarray}
\label{first}  && C_M 2^M E \biggl[ \biggl( \int
_\varepsilon^t \int \bigl(p_n(t+h-s,x-y)
\nonumber
\\[-8pt]
\\[-8pt]
\nonumber
&&\hspace*{64pt}\qquad{}-p_n(t-s,x-y)
\bigr)^2 z_n(s,y)^2 \,dy \,ds
\biggr)^{{M}/2} \biggr]
\end{eqnarray}
and
%
\begin{equation}
\label{second} C_M 2^M E \biggl[ \biggl( \int
_t^{t+h} \int p_n(t+h-s,x-y)^2
z_n(s,y)^2 \,dy \,ds \biggr)^{{M}/2} \biggr].\vadjust{\goodbreak}
\end{equation}
The first term \eqref{first} we bound by H\"older's inequality with
$p=M/2, q=M/(M-2)$ to get
\begin{eqnarray*}
\hspace*{-4pt}&& E \biggl[ \int_\varepsilon^t \int
z_n(s,y)^M \,dy \,ds \biggr]\\
\hspace*{-4pt}&&\qquad{}\times  \biggl( \int
_\varepsilon^t \int\bigl| p_n(t+h-s,x-y)-p_n(t-s,x-y)\bigr|^{ {2M}/{(M-2)}}
\,dy \,ds \biggr)^{{(M-2)}/{2}}.
\end{eqnarray*}
The expectation term is bounded above by \eqref
{eqn:tightness_expectation_bound}. For $M > 6$, the second term is
uniformly bounded in $n$, again by using the local limit theorem to
perform the estimate for the heat kernel rather than the random walk
kernel. The upper bound is $C_M h^{{M}/{4} - 2}$.

We also use H\"{o}lder on \eqref{second} to get an upper bound of
\begin{eqnarray*}
&& E \biggl[ \int_t^{t+h} \int
z_n(s,y)^M \,dy \,ds \biggr]\\
&&\qquad{}\times  \biggl( \int
_{t}^{t+h} \int p_n(t+h-s,x-y)^{{2M}/{(M-2)}}
\,dy \,ds \biggr)^{{(M-2)}/{2}}.
\end{eqnarray*}
Again one uses the a priori bound of Lemma~\ref{apriori} and heat
kernel estimates to get that this is less than $C_M h^{{M}/4 - 2}$.
Therefore we have the existence of a constant $C_M$ such that
%
\begin{equation}
\label{eqn:t_bound} E \bigl[ \bigl| U_{n,\varepsilon}(x, t+h) -
 U_{n,\varepsilon}(x,t)\bigr|^M
\bigr] \leq C_M h^{{M}/4 - 2}.
\end{equation}
Combining \eqref{eqn:x_bound} with \eqref{eqn:t_bound} and the equation
\eqref{78'} we have the following:

\begin{lemma} \label{modcons} For each even $M>6$, and each $\varepsilon
>0$, there is a
$C_M<\infty$ such that for $t,t+h \ge\varepsilon$, and all $n \geq0$,
%
\begin{equation}
E \bigl[ \bigl| z_n(x+\delta, t+h) - z_n(x,t)\bigr|^M
\bigr]^{1/M}\le C_M \bigl(|\delta |+|h|\bigr)^{{1}/4 - {2}/{M}}.
\end{equation}
\end{lemma}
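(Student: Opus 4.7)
The strategy is to use the Duhamel decomposition \eqref{78'} with a starting time $\epsilon' = \epsilon/2$, so that $t - \epsilon' \ge \epsilon/2 > 0$ throughout the region $t, t+h \ge \epsilon$ of interest. Writing $z_n = A_{n,\epsilon'} + U_{n,\epsilon'}$, the triangle inequality in $L^M(Q)$ reduces the claim to bounding the joint space-time increments of $A_{n,\epsilon'}$ and $U_{n,\epsilon'}$ separately. The stochastic piece $U$ will contribute the stated H\"older exponent, while the deterministic piece $A$ is smoother (essentially Lipschitz in the regime we use) and its contribution will be absorbed.

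The first step handles $U_{n,\epsilon'}$ using the already-derived estimates \eqref{eqn:x_bound} and \eqref{eqn:t_bound}, which after taking $M$-th roots read
\begin{align*}
\| U_{n,\epsilon'}(x+\delta,t) - U_{n,\epsilon'}(x,t) \|_M &\le C_M |\delta|^{\frac12 - \frac1M}, \\
\| U_{n,\epsilon'}(x,t+h) - U_{n,\epsilon'}(x,t) \|_M &\le C_M h^{\frac14 - \frac2M}.
\end{align*}
Splitting the joint increment through the intermediate point $(x, t+h)$ and applying the triangle inequality produces the upper bound $C_M\bigl(|\delta|^{\frac12 - \frac1M} + h^{\frac14 - \frac2M}\bigr)$. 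Since $\tfrac12 - \tfrac1M \ge \tfrac14 - \tfrac2M$, for $(\delta, h)$ in any bounded set one has $|\delta|^{\frac12 - \frac1M} \le C\,(|\delta| + |h|)^{\frac14 - \frac2M}$ and trivially $h^{\frac14 - \frac2M} \le (|\delta| + |h|)^{\frac14 - \frac2M}$, which gives the required bound for the $U$ contribution.

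For the deterministic term, write $A_{n,\epsilon'}(t,x) = \int p_n(t-\epsilon', x-y) \, z_n(\epsilon', y) \, dy$ and apply Minkowski's integral inequality to bound $\| A_{n,\epsilon'}(x+\delta, t+h) - A_{n,\epsilon'}(x, t) \|_M$ by
\begin{align*}
\int \bigl| p_n(t+h-\epsilon', x+\delta-y) - p_n(t-\epsilon', x-y) \bigr| \, \| z_n(\epsilon', y) \|_M \, dy.
\end{align*}
By Lemma \ref{apriori}, $\| z_n(\epsilon',y) \|_M \le C_M \varrho(\epsilon', y)$, which is uniformly bounded and integrable in $y$. Since $t - \epsilon' \ge \epsilon/2 > 0$, standard heat kernel derivative estimates, transferred to the discrete kernel $p_n$ uniformly in $n$ via the local central limit theorem and Lemma \ref{lemma:discrete_bound}, give $\| A_{n,\epsilon'}(x+\delta, t+h) - A_{n,\epsilon'}(x, t) \|_M \le C_{M,\epsilon}(|\delta| + |h|)$, which on bounded sets is of a higher order than $(|\delta| + |h|)^{\frac14 - \frac2M}$. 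Combining the two pieces proves the lemma. The main technical obstacle is the uniformity in $n$ of the heat kernel and moment estimates; this is precisely what Lemma \ref{lemma:discrete_bound} together with the local CLT is designed to provide.
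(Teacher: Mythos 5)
Your proposal is correct and follows essentially the same route as the paper: the Duhamel decomposition $z_n = A_{n,\cdot} + U_{n,\cdot}$, the bounds \eqref{eqn:x_bound} and \eqref{eqn:t_bound} for the stochastic term combined by splitting the increment through an intermediate point, and the observation that the deterministic term is smoother. Your choices of restarting at $\epsilon' = \epsilon/2$ (so the kernel in $A$ stays non-degenerate as $t \downarrow \epsilon$) and of spelling out the Minkowski-plus-heat-kernel argument for $A$ are sensible refinements of details the paper leaves implicit, not a different method.
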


Now we use the inequality of Garsia \cite{garsia} that
%
\begin{equation}
\bigl|f(x)-f(y)\bigr| \le8 \int_0^{|x-y|}
\Psi^{-1} \bigl( B/ u^{2d} \bigr) \,dp(u)
\end{equation}
for all functions $f$ continuous in a unit cube $I\subset\mathbf{R}^d$
that satisfy the inequality
\[
\int_I\int_I\Psi \bigl(f(x)-f(y)
\bigr)/p \bigl(d^{-1/2} |x-y| \bigr) \,dx\,dy\leq B,
\]
where (i) $\Psi$ is nonconstant positive even convex function with
$\lim_{x\to\infty}\Psi(x)=\infty$, and (ii) $p$ is a positive
continuous even function increasing on $(0,\infty)$ that satisfies the
condition $\lim_{u\rightarrow0}p(u)=0$.

We are working in $d=2$ (space${}+{}$time). Choosing $\Psi(x)=x^M$, $M>6$
and $p(x)= x^{\gamma/M}$, we have from Lemma~\ref{modcons}.
%
\begin{equation}
E \biggl[ \int_{ t,s\in[\varepsilon,T], x,y\in\mathbb{R}} \Psi \biggl( \frac{ |z_n(x,
t) - z_n(y,s)|}{ p( 2^{-1/2} \sqrt{(t-s)^2+ (x-y)^2})} \biggr)
\biggr]\le C_M.
\end{equation}
Since
$
\int_0^{h} \Psi^{-1} ( B/ u^{2d}) \,dp(u) = C_{M,\gamma} B^{1/M}
h^{{(\gamma- 4)}/{M}}
$ with a finite $C_{M,\gamma}$ for $\gamma> 4$, we conclude that if
$\mathcal{H}_{[\varepsilon,T]\times\mathbb{R}} (\alpha, K)$
denotes the set of functions $z(t,x)$ on $[\varepsilon,T]\times\mathbb{R}$
with $|z(t,x)-z(s,y)|\le
K |(t-s)^2+ (x-y)^2|^{ \alpha/2}$, then we have the following:

\begin{lemma} If $P_n$ denotes the distribution of $z_n(t,x)$, then for
any $\varepsilon>0$ and
$\alpha<1/4$,
%
\begin{equation}
\limsup_{K\to\infty} \limsup_{n\to\infty}
P_n \bigl( C \bigl([\varepsilon, T]\times \mathbb{R} \bigr)\setminus
\mathcal{H}_{[\varepsilon,T]\times\mathbb{R}} (\alpha, K) \bigr)=0.
\end{equation}
In particular, since $\mathcal{H}_{[\varepsilon,T]\times\mathbb{R}}
(\alpha
, K)$ are compact sets of $C([\varepsilon, T]\times\mathbb{R}) $, the
$P_n$ are tight.
\end{lemma}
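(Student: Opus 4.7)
My plan is to apply the Garsia-Rodemich-Rumsey (GRR) inequality to the moment bound of Lemma \ref{modcons} exactly in the manner indicated in the text, and then combine with the Gaussian decay from Lemma \ref{apriori} to upgrade from compact spatial subdomains to the whole strip $[\epsilon, T] \times \R$.

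First I would fix $\alpha < 1/4$ and pick even $M > 6$ and $\gamma > 4$ large enough that the GRR integrand is integrable and $(\gamma - 4)/M > \alpha$. On each unit space-time box $[\epsilon, T] \times [j, j+1]$, $j \in \Z$, Lemma \ref{modcons} together with GRR (applied with $\Psi(u) = u^M$ and $p(u) = u^{\gamma/M}$) and Markov's inequality yields
$$
P_n\!\left( \sup_{(t,x) \neq (s,y) \text{ in } [\epsilon,T]\times[j,j+1]} \frac{|z_n(t,x) - z_n(s,y)|}{|(t-s)^2 + (x-y)^2|^{\alpha/2}} > K \right) \leq C_M K^{-M}
$$
uniformly in $n$ and $j$. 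This estimate already suffices for tightness if $C([\epsilon,T] \times \R)$ is taken with the topology of local uniform convergence, since Arzel\`a-Ascoli applies on each compact box.

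To obtain the stronger global H\"older estimate as literally stated in the lemma, I would refine the Burkholder-H\"older chain in the proof of Lemma \ref{modcons} by keeping the sharp Gaussian bound $E[z_n(s,y)^M] \leq C_M \varrho(s,y)^M$ from Lemma \ref{apriori}, rather than integrating it to a uniform constant via H\"older's inequality. Since the Duhamel kernel $p_n(t-s, x-y)$ in \eqref{78'} is concentrated near $y \approx x$, a Jensen-type argument should produce a spatially localized moment bound of the form
$$
E\!\left[ |z_n(t,x) - z_n(s,y)|^M \right] \leq C_M \bigl( \varrho(t,x) \varrho(s,y) \bigr)^{M/2} \bigl(|t-s| + |x-y|\bigr)^{M/4 - 2}.
$$
Feeding this refined bound into GRR on $[j, j+1]$ gives a probability estimate of order $C_M e^{-c j^2} K^{-M}$, which is summable over $j \in \Z$ and hence yields the global H\"older bound on $[\epsilon, T] \times \R$ with probability at least $1 - CK^{-M}$.

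The main technical obstacle is precisely this refinement: propagating the Gaussian spatial tail through the Burkholder-H\"older chain while retaining the correct power of $|t - s| + |x - y|$ in the increment. Once this spatially-localized moment estimate is established, compactness of $\mathcal{H}_{[\epsilon,T] \times \R}(\alpha, K)$ in $C([\epsilon, T] \times \R)$ follows from Arzel\`a-Ascoli, once one observes that the global H\"older bound together with the implicit Gaussian smallness of $z_n(t, x)$ as $|x| \to \infty$ forces both equicontinuity and vanishing at spatial infinity, which is all Arzel\`a-Ascoli requires on the non-compact domain.
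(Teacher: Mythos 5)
Your first step (GRR with $\Psi(u)=u^M$, $p(u)=u^{\gamma/M}$ applied to the increment bound of Lemma \ref{modcons}, then Chebyshev) is exactly the paper's mechanism, and your observation that the resulting box-by-box estimate already gives tightness in the local-uniform topology is correct. The problem is with your route to the global H\"older statement as literally written in the lemma: the entire weight of that argument rests on the refined, spatially localized increment bound
$E[|z_n(t,x)-z_n(s,y)|^M]\le C_M(\varrho(t,x)\varrho(s,y))^{M/2}(|t-s|+|x-y|)^{M/4-2}$,
which you assert, flag as ``the main technical obstacle,'' and do not prove. As written this is a genuine gap, and it is not a cosmetic one: the H\"older step in the proof of Lemma \ref{modcons} deliberately integrates $E[z_n(s,y)^M]$ over all of $y\in\R$ to produce a constant, which destroys exactly the localization you need; recovering it would require rerunning the Burkholder--H\"older chain with weighted norms, i.e.\ essentially a new lemma.

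The paper avoids this entirely by applying Garsia's inequality once on the whole strip $[\epsilon,T]\times\R$ rather than box by box. The only thing that must be checked is finiteness (uniformly in $n$) of the Garsia functional
$B=\int\!\!\int \Psi\bigl(|z_n(t,x)-z_n(s,y)|/p(2^{-1/2}((t-s)^2+(x-y)^2)^{1/2})\bigr)$,
and this needs no refined increment bound: for $|x-y|\le 1$ one uses Lemma \ref{modcons} directly, while for $|x-y|>1$ one bounds the increment crudely by $2^M(|z_n(t,x)|^M+|z_n(s,y)|^M)$ and invokes the Gaussian a priori bound of Lemma \ref{apriori}, the off-diagonal integral converging because $p(u)^M=u^{\gamma}$ with $\gamma>4>1$. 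A single application of Chebyshev then gives the global statement with one $K$. So your ``main technical obstacle'' is self-imposed; I would either switch to the global Garsia application or, if you keep the box decomposition, actually supply the weighted moment estimate. Separately, your closing compactness claim needs care for the same reason the paper's does: a uniform H\"older seminorm bound alone does not give compactness (all constants lie in $\mathcal{H}_{[\epsilon,T]\times\R}(\alpha,K)$), so one must adjoin a bound at a reference point, and the ``vanishing at spatial infinity'' you invoke is a property of the laws $P_n$, not of the set $\mathcal{H}_{[\epsilon,T]\times\R}(\alpha,K)$ itself.
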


\begin{remark*} If we had been more careful we could improve the modulus
of continuity to H\"older $1/2-$ in space, but for tightness
we do not need an optimal result.
\end{remark*}
\end{appendix}

\section*{Acknowledgments} The authors would like to thank Ivan Corwin
and several anonymous referees for helpful comments and literature
references that led to a much improved presentation.




\printaddresses

\end{document}